\documentclass[a4paper,10pt]{amsart}
\usepackage{color,latexsym,amsthm,psfrag,amsfonts,amsmath,amssymb,epsfig
}

\usepackage{hyperref}


\textwidth=160mm
\textheight=210mm
\oddsidemargin=0mm
\evensidemargin=0mm

\DeclareMathOperator{\Cox}{Cox}
\renewcommand{\div}{\textrm{div}}

\makeatletter
\@addtoreset{table}{section}
\makeatother

\begin{document}

\thispagestyle{empty}

\newtheorem{theorem}{Theorem}[section]
\newtheorem{lemma}[theorem]{Lemma}
\newtheorem{claim}[theorem]{Claim}
\newtheorem{cor}[theorem]{Corollary}
\renewcommand{\proofname}{Proof}
\newtheorem{property}[theorem]{Property}
\newtheorem{prop}[theorem]{Proposition}

\theoremstyle{definition}
\newtheorem{defin}[theorem]{Definition}
\newtheorem{question}[theorem]{Question}
\newtheorem{remark}[theorem]{Remark}
\newtheorem{example}[theorem]{Example}
\newtheorem{notation}[theorem]{Notation}

\def \H{{\mathbb H}}
\def \N{{\mathbb N}}
\def \R{{\mathbb R}}
\def \X{{\mathbb X}}
\def \E{{\mathbb E}}
\def \Z{{\mathbb Z}}
\def \S{{\mathbb S}}
\def \G{{\mathcal G}}
\def \Gr{{\mathrm Gr}}
\def \Vol{{\mathrm Vol}}
\def \l{\langle }
\def \r{\rangle }
\def \[{[ }
\def \]{] }
\def \d{D\,}
\def \sign{\text{\,sign\,}}
\def \conv{\text{\,conv\,}}
\def \wt{\widetilde}
\def \wh{\widehat}
\def \a{\alpha}
\def \b{\beta}
\def \dim{\text{\,dim\,}}
\def \rank{\text{\,rank\,}}
\def \Symb{\text{\,Simb\,}}
\def \relint{\text{rel\.int\,}}

\newcommand{\arcsinh}{\mathop{\mathrm{arcsinh}}\nolimits}
\newcommand{\vn}{\mathop{\mathrm{int}}\nolimits}
\newcommand{\rel}{\mathop{\mathrm{rel\:int}}\nolimits}
\newcommand{\w}{\widetilde }
\renewcommand{\o}{\overline }
\newcommand{\Q}{\mathbb{Q}}

\newcommand{\coR}{\color{red}}
\newcommand{\coG}{\textcolor{green}}
\newcommand{\coB}{\textcolor{blue}}

\newcommand{\<}{\left<}
\renewcommand{\>}{\right>}
\newcommand{\abs}[1]{\left|#1\right|}


\author{Anna Felikson}
\address{Department of Mathematical Sciences, Durham University, Science Laboratories, South Road, Durham DH1 3LE, UK}
\email{anna.felikson@durham.ac.uk, pavel.tumarkin@durham.ac.uk}

\author{Jessica Fintzen}
\address{Department of Mathematics, Harvard University, One Oxford Street, Cambridge, MA 02138, USA}
\email{fintzen@math.harvard.edu} 

\author{Pavel Tumarkin}

\thanks{Research partially supported by RFBR grant 11-01-00289-a (P.~T.) and DFG grant FE-1241/2-1 (A.~F.)}

\title{Reflection subgroups of odd-angled Coxeter groups}

\keywords{Coxeter group, reflection subgroup, Davis complex} 

\subjclass[2010]{20F55; 51F15}

\begin{abstract} 
We give a criterion for a finitely generated odd-angled Coxeter group to have a proper finite index subgroup generated by reflections. The answer is given in terms of the least prime divisors of the exponents of the Coxeter relations.

\end{abstract}

\maketitle

\tableofcontents

\section{Introduction}

Reflection subgroups of Coxeter groups arise in various contexts. As proved by Dyer~\cite{Dyer} and Deodhar~\cite{Deodhar}, subgroups of Coxeter groups generated by reflections are Coxeter groups themselves. In the arithmetic over $\Q$ case they provide regular subalgebras of corresponding Kac-Moody algebras (see~\cite{Dynkin,max-hyperbolic,FN,affine,hyperbolic}).

Classifications of reflection subgroups of Coxeter groups are known in some special cases. Namely, the reflection subgroups of finite~\cite{Carter,Dynkin} and affine~\cite{Dyer,euclidean,DyerLehrer} Coxeter groups are completely classified. For reflection groups in the real hyperbolic space, there is a classification~\cite{hyperbolic} of reflection subgroups in the case of both the group and the subgroup having simplicial fundamental domains (the simplices may have distinct dimensions).     

Of special interest are reflection subgroups of finite index. In the arithmetic case, they correspond to those regular subalgebras of Kac-Moody algebras that have maximal rank. In the case of reflection groups acting on a space of constant curvature, a finite index reflection subgroup provides a tessellation of its fundamental polytope by copies of the fundamental polytope of the group. The same picture can be observed on the Davis complex of a general Coxeter group with finite index reflection subgroup~\cite{FT}.

In this paper, we solve the general problem of existence of finite index reflection subgroups in odd-angled Coxeter groups, i.e. in the groups with all orders $m_{ij}$ of products $s_is_j$ of generating reflections being odd (see Section~\ref{pril}).
The answer is given in terms of a {\it divisibility diagram} $\Cox_{\div}(W)$ which is a modification of the Coxeter diagram:
the edges of  $\Cox_{\div}(W)$ are labeled by least (non-trivial) divisors of $m_{ij}$ instead of $m_{ij}$ themselves, and the vertices are not joined if $m_{ij}=\infty$. The connectivity of the divisibility diagram of an odd-angled Coxeter group is equivalent to the existence of one conjugacy class containing all reflections of the group. 

We approach {{the}} problem by considering {\it special subgroups} of the given Coxeter group 
(they are also called {\it standard parabolic} in the literature) generated by a subset of the initial generating set (see Section~\ref{pril} for the precise definitions). The main tool relating finite index reflection subgroups in a Coxeter group and in its special subgroups is the following {\it Subdiagram Lemma}:

\setcounter{section}{3}
\setcounter{theorem}{1}
\begin{cor}[Subdiagram Lemma]
Let $W$ be a Coxeter group with set of generators $S$ such that $\Cox_{\div}(W)$ is connected. Suppose that $V\subsetneq W$ is a reflection subgroup of index $n$, $1 < n < \infty$. Let $W_1$ be a special subgroup of $W$. Then $W_1$ contains a proper reflection subgroup of index at most $n$.

\end{cor}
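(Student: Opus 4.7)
I would take the natural candidate $V_1 := W_1 \cap V$ as the prospective subgroup of $W_1$. The map of coset spaces $W_1/V_1 \to W/V$, $wV_1 \mapsto wV$, is well-defined and injective, so $[W_1 : V_1] \leq [W:V] = n$. Since $V$ is a reflection subgroup of $W$ and $W_1$ is a special (and hence reflection) subgroup, the intersection $V_1$ will again be generated by reflections, by the theory of reflection subgroups of Coxeter groups of Dyer and Deodhar recalled in the introduction. Thus $V_1$ is a reflection subgroup of $W_1$ of index at most $n$, and if $V_1 \neq W_1$ the proof is finished.

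The remaining case is $V_1 = W_1$, i.e. $W_1 \subseteq V$. Here I would replace $V$ by a conjugate $gVg^{-1}$: this is again a proper reflection subgroup of index $n$, and the same intersection argument produces a reflection subgroup $W_1 \cap gVg^{-1}$ of $W_1$ of index at most $n$, which is proper precisely when $W_1 \not\subseteq gVg^{-1}$. It therefore suffices to find some $g \in W$ with $W_1 \not\subseteq gVg^{-1}$. If no such $g$ existed, then $W_1$ would lie in the normal core $N := \bigcap_{g\in W} gVg^{-1}$, which is a proper normal subgroup of $W$ of finite index. Assuming $W_1 \neq \{e\}$ (the only case in which the conclusion is not vacuous), $W_1$ contains some generator $s \in S$, and normality of $N$ forces the entire $W$-conjugacy class of $s$ into $N$. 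By the property recorded in the introduction, the connectivity of $\Cox_{\div}(W)$ places all reflections of $W$ in a single conjugacy class, so $S \subseteq N$, giving $N = W$ and contradicting $N \subseteq V \subsetneq W$.

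The main obstacle is the properness step in the second case: the index bound and the fact that intersecting $V$ with $W_1$ yields a reflection subgroup are essentially formal, but ensuring that \emph{some} conjugate of $V$ fails to contain $W_1$ is precisely where the connectivity of $\Cox_{\div}(W)$ must be used, through the single-conjugacy-class property of reflections. A subsidiary point is to quote correctly the closure of ``reflection subgroup'' under intersection with a special subgroup, so that the candidate $W_1 \cap gVg^{-1}$ qualifies as a reflection subgroup of $W_1$ in the sense of the lemma.
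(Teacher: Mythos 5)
Your route is genuinely different from the paper's (which chooses the conjugate $V'=wVw^{-1}$ so that a prescribed generator $s\in S_1$ is explicitly excluded from $V'$, and then works inside the Davis complex), and parts of it are sound: the index bound via the injection of coset spaces $W_1/V_1\hookrightarrow W/V$ is fine, and your normal-core argument for properness is a correct alternative to the paper's direct conjugation trick, using the same single-conjugacy-class consequence of the connectedness of $\Cox_{\div}(W)$ that the paper invokes. However, there is a genuine gap at the step you treat as a citation: the claim that $V_1=W_1\cap V$ (or $W_1\cap gVg^{-1}$) is generated by reflections is not contained in the Dyer--Deodhar results recalled in the introduction, which say only that a subgroup generated by reflections is itself a Coxeter group. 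Intersections do not automatically inherit the property of being reflection-generated: already in the dihedral group $\langle s,t\mid s^2=t^2=(st)^4=1\rangle$, the two reflection subgroups $\langle s,tst\rangle$ and $\langle t,sts\rangle$ intersect in the center $\{1,(st)^2\}$, generated by a rotation. So ``reflection subgroup intersected with a subgroup'' can fail to be a reflection subgroup, and the special case you need -- that the intersection of a finite-index reflection subgroup with a special subgroup is again generated by reflections (and then by reflections of $W_1$, using $T\cap W_1=T_{W_1}$) -- is true but requires a genuine argument, not a quotation.

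That missing argument is essentially the whole content of the lemma. One way to supply it is a Steinberg-type stabilizer theorem: realize $W_1$ as the stabilizer of a point of the Tits cone and show that point stabilizers in $V$, equipped with its canonical Coxeter structure whose chamber is the principal fundamental domain, are generated by the reflections of $V$ fixing the point; this uses the finite-index hypothesis to compare the Tits cones and is comparable in effort to the paper's proof. The paper instead sidesteps the question of whether the full intersection is a reflection subgroup: after conjugating so that $s\notin V'$, it assembles the chambers of the principal fundamental domain of $V'$ that lie in $\Sigma(W_1,S_1)$ into a Coxeter polytope $F_1$ containing $D(1)$ and $D(s)$ and at most $n$ chambers, and then uses the correspondence between Coxeter polytopes and reflection subgroups of $W_1$ to produce the desired subgroup (which need not be all of $W_1\cap V'$). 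So your plan can be completed, but as written the key step is unproven, and the proof is incomplete.
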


The Subdiagram Lemma implies that the divisibility diagrams of Coxeter groups with proper finite index reflection subgroups compose a partially ordered set (with the order being inclusion), which means that it is sufficient to classify minimal (by inclusion) divisibility diagrams defining Coxeter groups without subgroups. This is done in Theorem~\ref{minimal}, the result is shown in Table~\ref{answer}.

\medskip
\noindent
The groups with disconnected divisibility diagrams are treated based on the following lemma:

\setcounter{section}{6}
\setcounter{theorem}{2}
\begin{lemma}
Let $W$ be an odd-angled Coxeter group and $W=W_1* W_2*\dots* W_k$. 
Then  $W$ contains a proper finite index reflection subgroup if and only if at least one of $W_1,\dots,W_k$ contains one. 

\end{lemma}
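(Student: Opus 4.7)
I would prove each direction separately.

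\emph{Sufficiency.} Assume without loss of generality that $W_1$ contains a proper finite-index reflection subgroup $V_1$ of index $n$. Let $\pi\colon W\to W_1$ be the projection killing $W_2,\ldots,W_k$ (the quotient by the normal closure $N$ of $W_2\cup\cdots\cup W_k$), and set $V:=\pi^{-1}(V_1)$. Then $[W:V]=[W_1:V_1]=n$ and $V\neq W$, since $V_1\neq W_1$. Clearly $V=\langle V_1\cup N\rangle$; as a subgroup of $W$, the normal closure $N$ is generated by all $W$-conjugates of Coxeter generators of $W_j$ for $j\geq 2$, and each such conjugate is a reflection of $W$. Together with the reflections generating $V_1$, these exhibit $V$ as a reflection subgroup of $W$.

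\emph{Necessity.} Suppose $V\leq W$ is a proper finite-index reflection subgroup with $[W:V]=n>1$. By Dyer's theorem, $V$ is itself a Coxeter group. I would apply Kurosh's subgroup theorem to decompose $V=F*(\ast_\alpha V_\alpha)$, where $F$ is free and each $V_\alpha=V\cap g_\alpha W_{i_\alpha}g_\alpha^{-1}$ is the intersection of $V$ with a $W$-conjugate of some $W_{i_\alpha}$. Since the abelianization of any Coxeter group is an elementary abelian $2$-group, the free factor $F$ must be trivial. Conjugating back, $V_\alpha':=g_\alpha^{-1}V_\alpha g_\alpha$ is a reflection subgroup of $W_{i_\alpha}$ of finite index bounded by $n$ (because the $V$-orbit of the vertex $g_\alpha W_{i_\alpha}$ in the Bass-Serre tree $T$ of the free-product decomposition has size at most $n$).

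It suffices to produce some $\alpha$ with $V_\alpha'\neq W_{i_\alpha}$; such $V_\alpha'$ is then a proper finite-index reflection subgroup of $W_{i_\alpha}$. Suppose for contradiction that $V_\alpha'=W_{i_\alpha}$ for every $V$-orbit $\alpha$ of a non-central vertex of $T$. Since $F$ is trivial, the quotient graph $V\backslash T$ is a tree. Under our hypothesis, each non-central vertex of $V\backslash T$ has degree $[W_{i_\alpha}:V_\alpha']=1$ and is therefore a leaf. Let $d_i$ denote the number of type-$W_i$ vertices in $V\backslash T$: summing edge-endpoints at non-central vertices gives $\sum_id_i$ equal to the total edge count $nk$, while Euler's formula $V-E=1$ for a tree yields $n+\sum_id_i-nk=1$. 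Combining forces $n=1$, contradicting $n>1$. Hence some $V_\alpha'$ is a proper finite-index reflection subgroup of the corresponding $W_{i_\alpha}$.

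The main obstacle is ensuring the Bass-Serre/Kurosh analysis applies cleanly to the reflection subgroup setting --- in particular, that each intersection $V\cap gW_ig^{-1}$ is genuinely a reflection subgroup of the parabolic $gW_ig^{-1}$ (a standard fact about reflection subgroups intersected with parabolic subgroups), and that the Euler-characteristic count on $V\backslash T$ is correctly set up, including at vertices whose stabilizer might a priori be trivial.
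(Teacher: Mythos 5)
Your proof is correct in outline but takes a genuinely different route from the paper, which argues entirely on the Davis complex: for sufficiency the paper assembles the chambers of the principal fundamental domain of $V_1$ into a polytope $P\subset\Sigma(W,S)$ and checks its dihedral angles (no wall of one free factor meets a wall of another), while your retraction argument is shorter, purely algebraic, and complete: $V=\pi^{-1}(V_1)=NV_1$, both $N$ and $V_1$ are generated by reflections of $W$, and $[W:V]=[W_1:V_1]$. For necessity the paper simply restricts the principal fundamental domain of $V$ to $\Sigma(W_i,S_i)$ for the factor $W_i$ containing a generator $s\notin V$, exactly as in the proof of Lemma~\ref{special subgroup}, whereas you use Kurosh plus an Euler-characteristic count on $V\backslash T$. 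Two incidental remarks: you do not need Dyer's theorem to kill the free factor, since $V$ is generated by involutions and hence already has elementary abelian $2$-group abelianization; and the bound $[g_\alpha W_{i_\alpha}g_\alpha^{-1}:V_\alpha]\le n$ should be justified by the standard inequality $[K:K\cap H]\le[G:H]$ (or by counting edge orbits at the vertex), not by the size of the $V$-orbit of the vertex, which equals $[V:V_\alpha]$ and may be infinite.

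The one substantive gap is the step you yourself flag: that $V_\alpha=V\cap g_\alpha W_{i_\alpha}g_\alpha^{-1}$ is generated by reflections. This is true, but it is not a harmless ``standard fact'': intersections of reflection subgroups are in general not reflection subgroups (in the infinite dihedral group $\langle s,t\rangle$ the reflection subgroups $\langle s,tst\rangle$ and $\langle t,sts\rangle$ meet in the infinite cyclic group $\langle (st)^2\rangle$, which contains no reflection), so the parabolic hypothesis is doing real work. One correct justification uses Dyer's root subsystem of $V$ (whose positive system is the intersection of its root system with $\Phi^+$) and the containment of the Tits cone of $W$ in that of $V$, so that $V\cap\mathrm{Stab}_W(x)=\mathrm{Stab}_V(x)$ is generated by the reflections of $V$ fixing $x$; this machinery lies outside the paper's toolkit and must be either proved or cited precisely. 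Alternatively you can avoid it altogether by doing what the paper does: restrict the principal fundamental domain of $V$ to the Davis complex of a factor containing a generator $s\notin V$, which immediately produces a proper reflection subgroup of that factor of index at most $n$, with no Bass--Serre input. Finally, your contradiction hypothesis should also account for non-central vertices whose $V$-stabilizer is trivial (these are not Kurosh factors); in that case the relevant $W_i$ is finite of order at most $n$ and the conclusion still holds under the paper's conventions (compare the rank at most $2$ step in the proof of Theorem~\ref{minimal}). With these points supplied, your counting argument, namely $\sum_i d_i=nk$ together with $n+\sum_i d_i-nk=1$ forcing $n=1$, is correct, since the quotient graph is a bipartite tree and every edge has exactly one endpoint at a factor-type vertex.
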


This brings us to the following criterion.  

\begin{theorem}
An odd-angled Coxeter group $W$ contains no finite index proper reflection subgroup if and only if each connected component of $\Cox_{\div}(W)$ contains one of the diagrams shown in Table.~\ref{answer} as a subdiagram.

\end{theorem}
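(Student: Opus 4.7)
The plan is to combine the three structural results already recorded in the excerpt: the free-product Lemma~6.3, the Subdiagram Lemma (Corollary~3.2), and the classification Theorem~\ref{minimal} whose output is Table~\ref{answer}.

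\textbf{Reduction to the connected case.} Because $W$ is odd-angled, $m_{ij}=2$ never occurs, so the connected components of $\Cox_{\div}(W)$ are exactly the connected components of the Coxeter diagram of $W$, and they correspond to the factors in the canonical free-product decomposition $W=W_1*\cdots*W_k$. By Lemma~6.3, $W$ has no proper finite index reflection subgroup if and only if none of the $W_i$ has one. Moreover, each $\Cox_{\div}(W_i)$ is precisely the corresponding connected component of $\Cox_{\div}(W)$, so the condition ``each connected component of $\Cox_{\div}(W)$ contains a Table~\ref{answer} subdiagram'' translates into the same condition applied separately to each $W_i$. The theorem therefore reduces to the case when $\Cox_{\div}(W)$ is connected.

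\textbf{The $(\Leftarrow)$ direction.} Assume $\Cox_{\div}(W)$ is connected and contains some diagram $D$ from Table~\ref{answer} as an induced subdiagram, and let $W_D\subset W$ be the special subgroup generated by the vertices of $D$. Then $\Cox_{\div}(W_D)=D$, and Theorem~\ref{minimal} tells us that $W_D$ has no proper finite index reflection subgroup. The contrapositive of the Subdiagram Lemma, applied to $W$ (whose divisibility diagram is connected) with special subgroup $W_D$, then forces $W$ itself to have no proper finite index reflection subgroup.

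\textbf{The $(\Rightarrow)$ direction.} Assume $\Cox_{\div}(W)$ is connected and $W$ has no proper finite index reflection subgroup. Call a finite divisibility diagram \emph{bad} if its Coxeter group has no proper finite index reflection subgroup. Among the induced subdiagrams of $\Cox_{\div}(W)$ that are bad, pick one $D$ with as few vertices as possible; such $D$ exists because $\Cox_{\div}(W)$ itself is bad and admits only finitely many subdiagrams. The chosen $D$ must be connected: if $D=D_1\sqcup D_2$, then $W_D=W_{D_1}*W_{D_2}$, and Lemma~6.3 would force both $W_{D_1}$ and $W_{D_2}$ to be bad, contradicting the minimality of the vertex count of $D$. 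Hence $D$ is a minimal bad diagram, and by Theorem~\ref{minimal} it appears in Table~\ref{answer}, providing the required subdiagram.

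\textbf{Where the difficulty lies.} The entire technical content of the result is absorbed into Theorem~\ref{minimal} (the case-by-case identification of Table~\ref{answer}); given that classification together with the Subdiagram Lemma, the deduction above is essentially bookkeeping. The subtleties to verify are only that the divisibility diagram of a special subgroup is the induced subdiagram on the chosen generators, that in the odd-angled setting connectedness of the Coxeter and divisibility diagrams agree, and that each invocation of the Subdiagram Lemma respects its connectedness hypothesis, which is precisely what the initial reduction arranges.
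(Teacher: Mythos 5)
Your argument is correct and follows the paper's route exactly: reduce to the connected case via Lemma~\ref{component} (together with the remark that disconnected $\Cox_{\div}(W)$ means a free product decomposition), use the Subdiagram Lemma (Corollary~\ref{subd}) in contrapositive form for sufficiency, and extract a vertex-minimal ``bad'' subdiagram, necessarily connected and hence classified by Theorem~\ref{minimal}, for necessity. The only cosmetic point is that the fact that every diagram of Table~\ref{answer} defines a group without finite index reflection subgroups is, strictly speaking, supplied by the lemmas of Section~\ref{abs} invoked in the proof of Theorem~\ref{minimal} rather than by its bare statement, but this is precisely how the paper itself combines these ingredients.
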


We can also reformulate the criterion to get a self-contained form of the statement, i.e., to avoid references to the table of minimal groups without subgroups.

\begin{cor}
An odd-angled Coxeter group $W$ contains a finite index proper reflection subgroup if and only if 
$\Cox_{\div}(W)$ contains at least one connected component $C$ of one of the following three types:

\begin{itemize}
\item[1.] the order of $C$ is $1$ or $2$;

\item[2.] $C$ contains at most one multiple edge;

\item[3.] $C$ contains a subdiagram $D$ of order $3$ with labels $(5,5,3)$, and every non-absent edge of $C$ except the edges of $D$ is simple.

\end{itemize}

\end{cor}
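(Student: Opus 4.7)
The plan is to translate the Theorem's criterion into the self-contained form of the Corollary by a direct combinatorial inspection of Table~\ref{answer}. Taking contrapositives in the Theorem, $W$ contains a proper finite index reflection subgroup if and only if at least one connected component $C$ of $\Cox_{\div}(W)$ contains no diagram from Table~\ref{answer} as a subdiagram. It thus suffices to prove the purely combinatorial equivalence: a connected divisibility diagram $C$ avoids every diagram from Table~\ref{answer} as a subdiagram if and only if $C$ is of type~1, 2, or~3.

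For the direction ``types~$\Rightarrow$ avoids forbidden subdiagrams'', I would first observe that every entry of Table~\ref{answer} has order at least $3$ and carries at least two multiple edges; otherwise, by the Theorem applied to that entry, it would itself define a group with a proper reflection subgroup, contradicting that it is a minimal forbidden pattern. Consequently, diagrams of type~1 (order $\le 2$) and type~2 (at most one multiple edge) automatically contain no such subdiagram, while for type~3 I would run an entry-by-entry check against Table~\ref{answer} to confirm that attaching only simple edges to a $(5,5,3)$-triangle never produces any of the listed diagrams.

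For the converse, given a connected $C$ not of any of the three types, one has $|C|\ge 3$ together with at least two multiple edges. I would split into subcases according to their arrangement: (a) two multiple edges are non-adjacent; (b) two multiple edges are adjacent but the third edge of their ambient triangle is either absent, or present with a label other than~$3$, or the two adjacent multiple edges are not both labeled~$5$; (c) $C$ contains the full $(5,5,3)$-triangle but also carries a further multiple edge elsewhere. In each subcase I would point to the explicit minimal diagram in Table~\ref{answer} that sits inside $C$.

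The main obstacle is this converse step: matching each configuration precisely to a row of Table~\ref{answer}, especially in subcase~(c) and among the ``near-miss'' triangles with labels such as $(5,5,5)$, $(5,5,7)$, or $(5,7,3)$. The check is finite and elementary, but requires care to ensure that every arrangement of multiple edges violating types~1--3 is indeed caught by some entry of the table; once this bookkeeping is completed, the corollary follows immediately from the Theorem.
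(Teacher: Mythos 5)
Your proposal is correct and takes essentially the same route as the paper: there the corollary is stated without a separate proof, as an immediate reformulation of Theorem~\ref{cor1} obtained by inspecting Table~\ref{answer}, which is exactly the combinatorial translation you describe (types 1--3 are precisely the connected diagrams containing no entry of the table, the forward direction by noting every entry has order at least $3$ and at least two multiple edges arranged differently from a $(5,5,3)$-triangle with simple attachments, the converse by the finite case check). The bookkeeping you defer in the converse — in particular handling non-adjacent multiple edges via a shortest connecting path so as to land in the rank $\ge 4$ rows of the table — is exactly the content the paper likewise leaves to the reader, and it does go through.
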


\setcounter{section}{1}

\medskip

The paper is organized as follows.
In Section~\ref{pril}, we recall necessary facts about Coxeter groups and their Davis complexes.
Section~\ref{sub} is devoted to the proof of the Subdiagram Lemma (Corollary~\ref{subd}).
In Section~\ref{examples}, we construct examples of finite index reflection subgroups in two series of odd-angled Coxeter groups.
Section~\ref{abs} is devoted to the proof of absence of finite index reflection subgroups in most Coxeter groups with 
connected divisibility diagram. The combinatorial tools necessary for the proof are developed in Section~\ref{tools}.
Finally, in Section~\ref{min} we combine the results of the previous two sections to obtain the list (Table~\ref{answer})
of minimal groups containing no finite index reflection subgroups (see Theorem~\ref{minimal}).
We also prove Lemma~\ref{component} (concerning disconnected divisibility diagrams) 
and use it to prove Theorem~\ref{cor1}.

{We note that some of the technical tools and partial results (especially in Section~\ref{abs}) still hold if we consider a larger class of groups (namely, {\it skew-angled} Coxeter groups, where $m_{ij}$ may be even but not equal to $2$), {and some even for arbitrary Coxeter groups}. However, already in rank $3$ there are series of examples of finite index subgroups of skew-angled Coxeter groups (see Remark~\ref{evenseries}) indicating that the answer for skew-angled groups will be more complicated. }


\section{Preliminaries}
\label{pril}

In this section, we will mainly follow~\cite{Davis} to reproduce definitions and essential properties of Coxeter groups and related constructions.

\subsection{Coxeter system}
A group $W$ is called a \textit{Coxeter group} if it has a representation of the form 
$$	W=\< S | (s_is_j)^{m_{ij}}=1 \, \forall \, s_i, s_j \in S \>
$$
where $S$ is a set, $m_{ii}=1$ and $m_{ij} \in \N_{>1} \cup \{\infty \}$ for all $i\neq j$. Thereby $m_{ij}=\infty$ means that there is no relation on $s_is_j$. Furthermore, throughout the paper we require $S$ to be finite. 
 A pair $(W,S)$ of a Coxeter group $W$ and its set of generators $S$ is called a \textit{Coxeter system}. The cardinality of $S$ is called {\it rank} of the Coxeter system.

An element of $W$ is said to be a \textit{reflection} if it is conjugated in $W$ to an element of $S$.
A {\it reflection subgroup} in a Coxeter group is a proper subgroup generated by reflections{, where ``proper''  means of index greater than one}.
A {\it special subgroup} of $(W,S)$ is a reflection subgroup generated by elements of $S'$ where $S'\subset S$.

A Coxeter group is called {\it skew-angled} if $m_{ij}\ne 2$ for every pair $(i,j)$ and {\it odd-angled} if all $m_{ij}$ are odd or infinite.

\medskip

Coxeter groups are usually presented by Coxeter diagrams (see~\cite{V}).
In this paper it will be convenient to  use the following modification of Coxeter diagrams.

\begin{defin}[Divisibility diagram]
\label{div}
Let $(W,S)$ be a Coxeter system. A {\it divisibility diagram}  $\Cox_{\div}(W)$ of $W$ is a one-dimensional simplicial complex with edges labeled by positive integers constructed in the following way:
\begin{itemize}
\item vertices $v_i$ of $\Cox_{\div}(W)$ correspond to generating reflections $s_i\in S$;
\item vertex $v_i$ is joined with vertex $v_j$ by an edge labeled by $k>1$ if $k$ is the minimal non-trivial divisor of $m_{ij}$ {{(as in Coxeter diagrams, label $k=3$ is omitted)}}; 
\item $v_i$ and $v_j$ are  not joined if the order of $(s_is_j)$ is infinite. 

\end{itemize} 
\end{defin}

We call an edge without any label {\it simple}, and all the other edges {\it multiple}. If two vertices are not joined, we say they are joined by an {\it absent edge}.   
A divisibility diagram $\Cox_{\div}(W)$ of a skew-angled Coxeter group can be obtained from the Coxeter diagram in the following way: substitute all labels by their least prime divisors, and delete all dashed edges {{(corresponding to infinite dihedral subgroups)}}. 

{{By a {\it subdiagram} of a divisibility diagram (or a Coxeter diagram) we always mean ``full'' subdiagram, i.e., a diagram obtained by removing some vertices and all edges emanating from them.}}

\subsection{Length function and Exchange Condition}
Since a Coxeter group $W$ is generated by the elements of $S$, we can write each $w \in W$ in the form $s_1 s_2 \cdots s_k$, where $s_1, s_2, \hdots, s_k$ are some (not necessarily distinct) elements of $S$. If $k$ is chosen such that $w$ cannot be written as a product of less than $k$ elements of $S$, we call $s_1 s_2 \cdots s_k$ a \textit{reduced expression} for $w$ and say that $k$ is the \textit{length} of $w$, which we denote by $l(w)$. By convention $l(1)=0$.

The proof of the following fundamental result on the length function for Coxeter groups can be found in~\cite[Theorem 5.8]{Humphreys}.
\begin{theorem}[Strong Exchange Condition]
Let $(W,S)$ be a Coxeter system and $w=s_1 s_2 \cdots s_n$ with $s_1, s_2, \hdots, s_n$ being not necessarily distinct elements of $S$. If $t$ is a reflection satisfying $l(wt)<l(w)$, then there exists $i \leq n$ for which $wt=s_1 \cdots \hat s_i \cdots s_n$, where $\hat s_i$ denotes that the element $s_i$ is omitted. 
\end{theorem}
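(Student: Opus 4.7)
The plan is to reduce the combinatorial length statement to a statement about roots in the canonical geometric representation of $(W,S)$, and then to identify the index $i$ by tracking the position along $s_1\cdots s_n$ at which a positive root is turned negative. First I would set up the representation: let $V$ be the real vector space with basis $\{\alpha_s\}_{s\in S}$ carrying the symmetric bilinear form $B(\alpha_s,\alpha_{s'})=-\cos(\pi/m_{ss'})$ (interpreted as $-1$ when $m_{ss'}=\infty$), and define $\sigma\colon W\to GL(V)$ by $\sigma(s)v = v - 2B(\alpha_s,v)\alpha_s$. The foundational input is that this extends to a faithful representation, and that the root system $\Phi:=\sigma(W)\cdot\{\alpha_s\}$ splits as a disjoint union $\Phi=\Phi^+\sqcup\Phi^-$ of roots with, respectively, nonnegative and nonpositive coefficients in the basis $\{\alpha_s\}$. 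Each reflection $t\in W$ then has a well-defined positive root $\alpha_t\in\Phi^+$.

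The bridge between lengths and roots is the equivalence $l(wt)<l(w)\iff \sigma(w)(\alpha_t)\in\Phi^-$, which I would establish first for $t\in S$ by induction on $l(w)$ (the content of Humphreys' Lemma 5.7) and then extend to general reflections by conjugation. With this in hand, for the expression $w=s_1s_2\cdots s_n$ and reflection $t$ satisfying $l(wt)<l(w)$, I would form the chain of roots
\[
\beta_i \;:=\; \sigma(s_{i+1}s_{i+2}\cdots s_n)(\alpha_t), \qquad 0\le i\le n,
\]
so that $\beta_n=\alpha_t\in\Phi^+$ while $\beta_0=\sigma(w)(\alpha_t)\in\Phi^-$. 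Let $k$ be the largest index for which $\beta_k\in\Phi^-$; then $k<n$, $\beta_{k+1}\in\Phi^+$, and $\sigma(s_{k+1})(\beta_{k+1})=\beta_k\in\Phi^-$. Note that this argument uses nothing about the expression being reduced, which matches the statement.

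The standard fact that a simple reflection $\sigma(s)$ sends a positive root to a negative one only when that root equals $\alpha_s$ now forces $\beta_{k+1}=\alpha_{s_{k+1}}$. Setting $i=k+1$, this rearranges to $\alpha_t=\sigma(s_ns_{n-1}\cdots s_{i+1})(\alpha_{s_i})$, so $t=(s_n\cdots s_{i+1})s_i(s_{i+1}\cdots s_n)$. Substituting and cancelling the pairs $s_js_j$ for $j=n,n-1,\ldots,i+1$ in the product $wt$ telescopes to $wt=s_1\cdots s_{i-1}s_{i+1}\cdots s_n$, which is the exchange. I expect the main obstacle to lie not in the sign-flip argument above, which is short, but in the foundational package — faithfulness of $\sigma$ together with the dichotomy $\Phi=\Phi^+\sqcup\Phi^-$ — whose proof requires a simultaneous induction on word length exploiting that each $\sigma(s)$ negates $\alpha_s$ and permutes the remaining positive roots.
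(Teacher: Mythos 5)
The paper itself gives no proof of this theorem: it simply quotes it with a pointer to Humphreys, Theorem 5.8, and your argument is precisely the standard proof found there — the geometric representation, the dichotomy $\Phi=\Phi^+\sqcup\Phi^-$, the length--root criterion, and locating the index where the sign of $\sigma(s_{i+1}\cdots s_n)(\alpha_t)$ flips. So the route is correct and coincides with the source the paper relies on.

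One caveat worth fixing: the phrase ``extend to general reflections by conjugation'' is the only place your outline would not survive being written out. The equivalence $l(wt)<l(w)\iff \sigma(w)(\alpha_t)\in\Phi^-$ for an \emph{arbitrary} reflection $t=usu^{-1}$ does not follow formally from the simple-reflection case by conjugating, because length is not conjugation-invariant: you would need $l(wusu^{-1})<l(w)\iff l(wus)<l(wu)$, which is exactly what has to be proved. Two clean repairs: (i) invoke Humphreys' Proposition 5.7, which states and proves the criterion for arbitrary reflections directly (by induction on $l(w)$); or (ii) stay inside your own framework — your chain-of-roots argument already shows that $\sigma(u)(\alpha_t)\in\Phi^-$ forces the exchange $ut=r_1\cdots\hat r_j\cdots r_m$ for any expression $u=r_1\cdots r_m$, hence $l(ut)\le m-1$; now if one had $\sigma(w)(\alpha_t)\in\Phi^+$, then $\sigma(wt)(\alpha_t)\in\Phi^-$, and applying the chain argument to a reduced word for $wt$ gives $l(w)\le l(wt)-1$, contradicting $l(wt)<l(w)$. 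With either patch, the rest of your argument (identifying $\beta_{k+1}=\alpha_{s_{k+1}}$ because a simple reflection sends only its own simple root to a negative root, and the telescoping cancellation giving $wt=s_1\cdots\hat s_i\cdots s_n$) is complete and correct, and indeed uses nothing about $s_1\cdots s_n$ being reduced, as the statement requires.
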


If the element $t$ in the above theorem is required to be contained in $S$ and the word $s_1\cdots s_n$ is reduced, the resulting weaker statement is known as Exchange Condition. The following direct corollary will be useful later in this paper.
\begin{cor} \label{ending-element-corollary}
Let $(W,S)$ be a Coxeter system, $w\in W$ and $t \in S$. If $l(wt)<l(w)$, then there exists a reduced expression of $w$ that ends in $t$. 
\end{cor}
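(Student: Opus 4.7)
The plan is to apply the Strong Exchange Condition directly to a reduced expression for $w$, and then verify that the resulting expression is itself reduced.

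First I would fix a reduced expression $w = s_1 s_2 \cdots s_n$ with $n = l(w)$ and $s_i \in S$. Since $t \in S$ is in particular a reflection, and we are given $l(wt) < l(w)$, the Strong Exchange Condition applies: there exists an index $i \leq n$ such that
\[
wt \;=\; s_1 \cdots \hat{s_i} \cdots s_n.
\]
Multiplying both sides on the right by $t$ (using $t^2 = 1$) yields
\[
w \;=\; s_1 \cdots \hat{s_i} \cdots s_n \cdot t,
\]
an expression for $w$ as a product of $n$ elements of $S$ whose last letter is $t$.

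It remains only to observe that this expression is reduced. The right-hand side is a word of length $n$, and by assumption $l(w) = n$, so no shorter expression for $w$ exists; hence the displayed expression achieves the minimum length and is by definition reduced. This gives a reduced expression of $w$ ending in $t$.

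There is essentially no obstacle here: the corollary is a one-line consequence of the Strong Exchange Condition combined with the fact that a word of length $l(w)$ representing $w$ is automatically reduced. The only thing to be slightly careful about is that $t$ is a single generator (so multiplying by $t$ on the right of an $(n-1)$-letter word produces an $n$-letter word, not something shorter), but this is immediate from $t \in S$.
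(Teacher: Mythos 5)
Your argument is correct and is exactly the paper's proof: apply the Strong Exchange Condition to a reduced expression for $w$, multiply $wt=s_1\cdots\hat s_i\cdots s_n$ by $t$ on the right, and note the resulting $n$-letter word is automatically reduced since $l(w)=n$. No issues.
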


To prove the corollary take a reduced expression $s_1\cdots s_n$ for $w$ and multiply the equation $wt=s_1 \cdots \hat s_i \cdots s_n$ by $t$ from the right.

 As an element of $W$ might be represented by many different expressions, a natural question to ask is when two expressions represent the same element. The following theorem due to Tits~\cite{Tits}, whose proof is based on the Exchange Condition (see~\cite[Theorem 3.4.2]{Davis}), provides us with an algorithm to solve this question.

\begin{theorem} \label{M-operation}
	An expression $w=s_1 s_2 \cdots s_n$ with $s_1, \hdots, s_n \in S$ is  reduced if and only if it cannot be shortened by a sequence of the following two operations {{(called {\it M-operations})}}:
	\begin{enumerate}
	\item Delete $s_i s_{i+1}$ with $s_i=s_{i+1}$, $1 \leq i <n$, i.e. $w=s_1 s_2 \cdots \hat s_i \hat s_{i+1} \cdots s_n$.
	\item Replace $s_i s_{i+1} \cdots s_{i+m_{ij}-1}$ with $s_{i+2m}=s_i$ for $1 \leq m \leq (m_{ij}-1)/2$ and $s_{i+2m-1}=s_j$ for $1 \leq m \leq m_{ij}/2$ by $s_j s_{i} s_{i+1} \cdots s_{i+m_{ij}-2}$.
	\end{enumerate}
	Moreover, two reduced expressions $s_1 s_2 \cdots s_n$ with $s_1, \hdots, s_n \in S$ and $t_1 t_2 \cdots t_n$ with $t_1, \hdots, t_n \in S$ represent the same element in $W$ if and only if they can be transformed into each other by a sequence of operations of the second type.
\end{theorem}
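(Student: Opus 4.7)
The plan is to prove both halves of the theorem together by simultaneous induction on word length, using the Strong Exchange Condition as the main engine. Throughout, I treat reduced expressions as canonical forms and the M-operations as the only allowed rewriting moves.

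For the first assertion, the ``if'' direction is immediate: operation~(1) strictly decreases length and operation~(2) preserves it, so an expression that admits a shortening sequence of M-operations cannot have been reduced. For the converse, suppose $w = s_1 \cdots s_n$ is not reduced, and let $j$ be minimal with $l(s_1 \cdots s_j) < j$. Then $w' := s_1 \cdots s_{j-1}$ is reduced and $l(w' s_j) < l(w')$, so by Corollary~\ref{ending-element-corollary} the element $w'$ admits some reduced expression $u_1 \cdots u_{j-2} s_j$ ending in $s_j$. If I can transform $s_1 \cdots s_{j-1}$ into $u_1 \cdots u_{j-2} s_j$ by a sequence of type~(2) operations, then one type~(1) operation cancels the resulting adjacent pair $s_j s_j$ in $w$, shortening the expression, and I finish by induction on $n$. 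Note that this step requires the second (word-problem) half of the theorem applied to the two reduced expressions of $w'$, which is why the two halves have to be proved in tandem.

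For the second assertion, I induct on $n = l(w)$ with the trivial base $n=0$. Given reduced expressions $s_1 \cdots s_n$ and $t_1 \cdots t_n$ for the same $w$, if $s_n = t_n$ the prefixes represent $w s_n$ of length $n-1$ and the induction hypothesis applies. If $s_n \neq t_n$, the crucial sublemma is: whenever $s \neq t$ in $S$ satisfy $l(ws) < l(w)$ and $l(wt) < l(w)$, then $m_{st}$ is finite and $w$ admits a reduced expression ending in the length-$m_{st}$ alternating word in $s$ and $t$. Once this is in hand, a single type~(2) operation turns the ending alternating in $s,t$ into the ending alternating in $t,s$, so I can match whichever final letter I wish and reduce to the $s_n = t_n$ case.

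The sublemma is where the real work lies, and I would prove it by a secondary induction on $l(w)$, repeatedly invoking Corollary~\ref{ending-element-corollary} to extend the alternating tail by one letter at each step and using the Strong Exchange Condition to verify that each new tail remains reduced. The process terminates exactly at the step where extending the tail would force the word to be non-reduced; extracting that failure yields both the finiteness of $m_{st}$ and the desired reduced expression of length containing the alternating block. The principal obstacle is bookkeeping in this secondary induction, specifically showing that the alternating tail we progressively assemble is genuinely reduced and not merely some expression for $w$; this is exactly the content that requires the full strength of the Strong Exchange Condition (with $t$ an arbitrary reflection), as opposed to the weaker Exchange Condition that only lets us cancel letters of $S$.
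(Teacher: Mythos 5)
The paper never proves this statement: it is quoted as Tits's theorem, with the proof deferred to \cite{Tits} and to \cite[Theorem 3.4.2]{Davis}, so there is no in-paper argument to compare yours against line by line. What you propose is, in outline, the standard derivation of the Tits--Matsumoto word property from the Exchange Condition, and its architecture is sound: the simultaneous induction is well-founded (your first half at length $n$ invokes the second half only at length $j-1\le n-1$, and the second half at length $n$ invokes itself only at length $n-1$ together with the sublemma), the case $s_n=t_n$ is handled correctly, and in the case $s_n\neq t_n$ passing through a reduced expression ending in the full alternating word, flipping it by one braid move, and then falling back on the equal-last-letter case is exactly the classical route. One tiny remark on the first half: once you have performed the braid moves on the prefix and deleted $s_js_j$, the word has already been shortened by M-operations, so the extra appeal to induction on $n$ there is unnecessary.

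The only place where the sketch is thinner than it should be is the sublemma, and the mechanism you name is not quite the one that works: Corollary~\ref{ending-element-corollary} only supplies the base case (a reduced expression ending in $s$); it does not by itself extend the alternating tail. The correct step is: if $w$ has a reduced expression $u_1\cdots u_{n-k}\,x_1\cdots x_k$ whose suffix $x_1\cdots x_k$ alternates in $s,t$ and ends, say, in $s$, then since $l(wt)<l(w)$ the Exchange Condition applied to this particular expression writes $wt$ as the same word with one letter omitted. If the omitted letter lies in the prefix $u_1\cdots u_{n-k}$, you get a reduced expression of $w$ ending in the alternating word $x_1\cdots x_k\,t$ of length $k+1$; if it lies in the tail, then cancelling the common prefix shows that the alternating word of length $k+1$ in $\langle s,t\rangle$ represents an element of length at most $k-1$, forcing $m_{st}\le k$, while $k\le m_{st}$ because the tail, being a suffix of a reduced expression, is itself reduced; hence $k=m_{st}<\infty$ and the current expression is the one required. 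Termination is automatic since $k\le l(w)$. Note also that the bookkeeping you worry about is lighter than you fear: tails are automatically reduced, and only the Exchange Condition with $t\in S$ is needed, not the full Strong Exchange Condition. Finally, you rightly avoid Lemma~\ref{end-of-reduced-expression} in this proof; in Davis's development that lemma comes after Tits's theorem, so using it here would risk circularity.
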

We have already observed that if $l(wt)<l(w)$ for a generator $t \in S$, then there exists a reduced expression of $w$ that ends in $t$. The following {{property}} of the set of possible last letters of reduced expressions for a given element of $W$ {{will be used in the sequel}}. 

\begin{lemma}[\cite{Davis},~Lemma 4.7.2] 
\label{end-of-reduced-expression}
 Let $w \in W$ and denote by $In(w)$ the subset of $S$ in which a reduced expression of $w$ can end. Then the subgroup generated by $In(w)$ is finite. 
\end{lemma}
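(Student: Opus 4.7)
The plan is to show that every $v \in W_T := \langle In(w)\rangle$ appears as a subword of some reduced expression of $w$; this gives $l(v) \le l(w)$ for all $v \in W_T$, and finiteness of $W_T$ then follows from $|W_T| \le \#\{g \in W : l(g) \le l(w)\} < \infty$, which is finite since $S$ is finite.

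I first reformulate $In(w)$ via Corollary~\ref{ending-element-corollary}: a reduced expression of $w$ can end in $s$ if and only if $l(ws) < l(w)$, so $In(w)$ is exactly the right descent set of $w$. I then prove the subword bound $v \le w$ (meaning that a reduced expression of $v$ is a subword of some reduced expression of $w$) for all $v \in W_T$, by induction on $l(v)$. The base case $v = e$ is immediate. For the inductive step, given $v \in W_T$ with $l(v) \ge 1$, I first extract a reduced $W$-expression of $v$ whose letters all lie in $T := In(w)$: since $v$ is some product of elements of $T$ by the definition of $W_T$, Theorem~\ref{M-operation} allows that product to be shortened (by deletions of adjacent coinciding pairs) to a reduced expression whose letters still lie in $T$. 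Writing $v = v' t$ with $t \in T$ the last letter of this reduced expression, I have $v' \in W_T$ and $l(v') = l(v) - 1$, so the induction hypothesis yields $v' \le w$.

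The critical step, and the main obstacle, is to upgrade $v' \le w$ to $v = v' t \le w$. This is precisely the \emph{lifting property} of the subword order: if $v' \le w$ and $s \in S$ satisfies $l(v's) > l(v')$ and $l(ws) < l(w)$, then $v' s \le w$ as well. Applied with $s = t$ — using $l(wt) < l(w)$ from $t \in In(w)$, and $l(v't) > l(v')$ from reducedness of $v = v't$ — this yields $v \le w$ and closes the induction. The lifting property itself is proved by invoking the Strong Exchange Condition on a compatible pair of reduced expressions for $v'$ and $w$, arranging a reduced expression of $w$ that simultaneously contains a reduced expression of $v'$ as a subword and ends in $t$, so that $v = v' t$ embeds as a subword. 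Once the subword bound $v \le w$ is secured, $l(v) \le l(w)$ is automatic and the finiteness of $\langle In(w)\rangle$ follows at once.
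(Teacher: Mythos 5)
The paper itself does not prove this statement (it is quoted from Davis, Lemma~4.7.2), so the only question is whether your argument stands on its own. Its overall architecture is sound and is essentially the standard textbook route: identify $In(w)$ with the right descent set $\{s\in S: l(ws)<l(w)\}$ via Corollary~\ref{ending-element-corollary}; observe via Theorem~\ref{M-operation} that every $v\in W_T$, $T=In(w)$, has a $W$-reduced expression with letters in $T$ (correct, because neither type of M-operation introduces letters outside $T$ -- though your parenthetical ``by deletions of adjacent coinciding pairs'' understates this, since braid moves are generally needed before a deletion becomes available); then show by induction on $l(v)$ that every $v\in W_T$ lies below $w$ in subword (Bruhat) order, whence $l(v)\le l(w)$ and $|W_T|$ is bounded by the number of words of length at most $l(w)$ in the finite alphabet $S$.

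The gap sits exactly where you located it: the lifting property. The property is true, but the one-sentence justification you give does not prove it. With your definition of $v'\le w$ (``\emph{some} reduced expression of $w$ contains a reduced word for $v'$''), you must exhibit a reduced expression of $w$ that simultaneously ends in $t$ and contains a reduced word for $v'$. Starting from the expression that contains $v'$ and applying the Strong Exchange Condition to $l(wt)<l(w)$ rewrites $w=s_1\cdots \hat s_j\cdots s_k\,t$, but the deleted position $j$ is not under your control and may be one of the positions occupied by the embedded $v'$-subword, in which case the new expression need not contain $v'$ at all. Starting instead from an expression of $w$ ending in $t$, you would need to know that \emph{this} expression also contains a reduced word for $v'$ -- that is the strong form of the subword property of Bruhat order, a theorem in its own right proved by induction on $l(w)$, not a one-step consequence of Strong Exchange. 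So the crucial step is asserted rather than proved. The repair is routine: either cite the lifting property / subword property (e.g.\ Bj\"orner--Brenti, Prop.~2.2.7 and Thm.~2.2.2, or Humphreys, Sections~5.9--5.10), or prove the subword property first by the usual induction; with that ingredient supplied, the remainder of your argument is complete and gives a legitimate alternative to simply quoting Davis as the paper does.
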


\subsection{Davis complex}

For any Coxeter system $(W,S)$ there exists a contractible
piecewise Euclidean cell complex $\Sigma(W,S)$ (called {\it Davis complex}) 
on which $W$ acts discretely, properly and cocompactly. 
The construction was introduced by Davis~\cite{Davis0}.
In~\cite{M} Moussong proved that this complex yields
a natural complete piecewise Euclidean metric which is $CAT(0)$. 
We give a brief description of this complex following~\cite{NV}.

For a finite group $W$ the complex $\Sigma(W,S)$ is just one cell, which is obtained
as the convex hull $C$ of the $W$-orbit of a suitable point $p$ in the standard linear 
representation of $W$ as a group generated by reflections. The point $p$ is chosen in such 
a way that its stabilizer in $W$ is trivial and all the edges of $C$ are of length $1$.
The faces of $C$ are naturally identified with Davis complexes of the subgroups of $W$
conjugated to special subgroups.

If $W$ is infinite, the complex $\Sigma(W,S)$ is built up of the Davis complexes of maximal 
finite subgroups of $W$ by gluing them together along their faces corresponding to common 
finite subgroups. The $1$-skeleton of $\Sigma(W,S)$ considered as a combinatorial graph 
is isomorphic to the Cayley graph of $W$ with respect to the generating set $S$. 

In what follows, if $W$ and $S$ are fixed, we write $\Sigma$ instead of $\Sigma(W,S)$.

\subsection{Walls and convex polytopes} \label{walls-section}

The group $W$ admits a natural action on $\Sigma(W,S)$ by reflections. The action is an isometry with respect to $CAT(0)$ piecewise Euclidean metric. A \textit{wall} $H_w$ corresponding to a reflection $w\in W$ is the fixed point set of $\Sigma(W,S)$ under the action of $w$. {{In particular, two walls $H_w$ and $H_u$ intersect if and only if the dihedral group generated by $u$ and $w$ is finite.}} Any wall divides $\Sigma$ into two connected components. We denote their closures by $H_w^+$ and $H_w^-$ and call them \textit{halfspaces}. Walls are totally geodesic, i.e. any geodesic between two points contained in the same wall lies entirely in this wall, see~\cite{Noskov}. This implies that every intersection of walls is, in its turn, totally geodesic, and halfspaces are convex. We note that since  $\Sigma(W,S)$ is $CAT(0)$, there is a unique geodesic through every two points of $\Sigma(W,S)$.

Following~\cite{FT}, we call an intersection of finitely many halfspaces not contained in any wall~\textit{convex polytope}.  In the sequel writing $P=\bigcap\limits_{i \le n} H_{w_i}^+$ we always assume that $P$ cannot be defined by less than $n$ walls.

A convex polytope that does not contain any other convex polytope is called a \textit{chamber}. Chambers are fundamental domains of the action of $W$ on $\Sigma$, and each chamber contains precisely one vertex of the $1$-skeleton of $\Sigma$
which corresponds to an element of the group $W$. Following~\cite{Noskov}, we will denote the chamber of $\Sigma$ corresponding to $w\in W$ by $D(w)$. In the sequel, by abuse of notation, we will sometimes identify the chamber corresponding to $w \in W$ with $w$ itself.

We call two elements $w$ and $v$ in $W$ \textit{neighbors} if and only if there exits $s \in S$ such that $w = vs$ (this corresponds to two neighboring chambers of $\Sigma$). Then a \textit{gallery of length $k$} is a sequence of chambers $D(w_1), D(w_2), \hdots, D(w_k)$, $w_1, \hdots, w_k \in W$, such that for every $1\le i \le k-1$ the elements $w_i$ and $w_{i+1}$ are neighbors. A gallery is {\it geodesic} if its length is equal to $l(w_1^{-1}w_k)+1$. Note that for any two chambers there exists a geodesic gallery between them. 

{The following lemma is well-known (see e.g.~\cite[Lemma 2.5(i)]{R} or~\cite[Lemma~2.2.5]{Noskov}).} 

\begin{lemma}\label{geodesic-gallery-lemma}
A gallery is geodesic if and only if it crosses any wall at most once.
\end{lemma}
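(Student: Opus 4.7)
The plan is to translate this geometric claim into an algebraic statement about reduced expressions and then invoke the Strong Exchange Condition (Theorem~2.4). Writing $s_i \in S$ for the generator with $w_{i+1} = w_i s_i$, we have $w_1^{-1} w_k = s_1 s_2 \cdots s_{k-1}$, so the definition of geodesic rephrases as saying that $s_1 s_2 \cdots s_{k-1}$ is a reduced expression for $w_1^{-1} w_k$.

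Next I would identify the wall crossed at each step. Since $D(1)$ and $D(s)$ are separated by the wall $H_s$ for every $s \in S$, and $W$ acts on $\Sigma$ by isometries with $w \cdot H_s = H_{w s w^{-1}}$, the chambers $D(w_i)$ and $D(w_{i+1}) = D(w_i s_i)$ are separated by the wall $H_{t_i}$ with $t_i = w_i s_i w_i^{-1}$. Conjugation by the fixed element $w_1^{-1}$ preserves equalities, so the gallery crosses each wall at most once if and only if the reflections $r_i := s_1 s_2 \cdots s_{i-1} s_i s_{i-1} \cdots s_2 s_1$, $i = 1, \ldots, k-1$, are pairwise distinct.

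The proof then reduces to the algebraic claim that the word $s_1 \cdots s_{k-1}$ is reduced if and only if $r_1, \ldots, r_{k-1}$ are pairwise distinct. If $r_i = r_j$ with $i < j$, then cancelling the common prefix $s_1 \cdots s_{i-1}$ on the left and the common suffix $s_{i-1} \cdots s_1$ on the right of $r_i = r_j$ and simplifying the telescoping products of involutions yields $s_i s_{i+1} \cdots s_{j-1} = s_{i+1} \cdots s_{j-1} s_j$. Substituting this inside $s_1 \cdots s_{k-1}$ and using $s_j^2 = 1$ produces $w_1^{-1} w_k = s_1 \cdots \widehat{s_i} \cdots \widehat{s_j} \cdots s_{k-1}$, a word of length at most $k-3$, so the original expression is not reduced. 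Conversely, if the word is not reduced, the deletion form of the Strong Exchange Condition produces indices $i < j$ with $s_1 \cdots s_{k-1} = s_1 \cdots \widehat{s_i} \cdots \widehat{s_j} \cdots s_{k-1}$; this forces the same identity $s_i s_{i+1} \cdots s_{j-1} = s_{i+1} \cdots s_{j-1} s_j$, from which a symmetric calculation yields $r_i = r_j$.

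The main technical work lies in this algebraic step, namely in tracking the telescoping cancellations carefully enough to pass in both directions between the equality $r_i = r_j$ and the word identity $s_i s_{i+1} \cdots s_{j-1} = s_{i+1} \cdots s_{j-1} s_j$. The geometric translation in the first two paragraphs is routine once one uses that $W$ permutes walls of $\Sigma$ by conjugation of the associated reflections.
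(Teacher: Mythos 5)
Your proof is correct. Note that the paper does not actually prove this lemma: it is quoted as well-known, with references to Ronan's book on buildings and to Noskov, so there is no internal proof to compare against. Your argument is the standard self-contained one: translating "geodesic" into "$s_1\cdots s_{k-1}$ is reduced", identifying the wall crossed at step $i$ with the reflection $t_i=w_is_iw_i^{-1}=w_1r_iw_1^{-1}$, and then proving the purely algebraic equivalence "reduced $\Leftrightarrow$ the $r_i$ are pairwise distinct" via the telescoping computation in one direction and the Deletion Condition (derived from the Strong Exchange Condition of Theorem~2.4) in the other; I checked both telescoping manipulations and they work, giving a word of length $k-3$ when $r_i=r_j$, and conversely $r_i=r_j$ from a deletion. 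This is essentially the argument in the cited sources, so your contribution is to make explicit what the paper outsources. The only ingredient you use silently is the wall--reflection dictionary: that the wall separating $D(w)$ from $D(ws)$ is $H_{wsw^{-1}}$, and that distinct reflections have distinct walls (needed for the direction "geodesic $\Rightarrow$ no wall crossed twice", since you pass from $t_i\neq t_j$ to $H_{t_i}\neq H_{t_j}$). Both facts are standard for the Davis complex and are implicitly assumed in the paper's own notation (e.g.\ in Corollary~\ref{inf neighb} and Lemma~\ref{inf}), so calling the geometric translation routine is acceptable, but a one-line acknowledgment of the injectivity of $t\mapsto H_t$ would make the proof airtight.
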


Let $P=\bigcap\limits_{i\le n} H_{w_i}^+$ be a convex polytope and $I$ be a subset of $\{1,2, \hdots , n\}$. If the intersection $\bigcap\limits_{i \in I} H_{w_i} \cap P$ is nonempty, it is called a \textit{face} of $P$. The intersection of $P$ with one of the walls $H_{w_i}$  is called a \textit{facet} of $P$. The walls $H_{w_i}$ itself will be called \textit{defining walls} of $P$. 

Following \cite{FT}, we define the \textit{dihedral angle} formed by two intersecting walls $H_{w_1}$ and $H_{w_2}$ to be the Euclidean dihedral angle between $H_{w_1} \cap C$ and $H_{w_2} \cap C$ in $C$, where $C$ is a maximal cell of $\Sigma$ containing $H_{w_1} \cap H_{w_2}$. 
We define analogously the \textit{dihedral angle} formed by two intersecting facets $f$ and $g$ of a convex polytope $P$ to be the dihedral angle between $f \cap \widetilde C$ and $g \cap \widetilde C$, where $\widetilde C$ is a cell of $\Sigma$ such that $\widetilde C \cap f \cap g$ is nonempty. If all the dihedral angles between any two intersecting facets of a convex polytope $P$ are less than or equal to $\pi/2$, we say that the convex polytope $P$ is \textit{acute-angled}. A convex polytope is a {\it Coxeter polytope} if all its dihedral angles are integer submultiples of $\pi$. 
Note that a chamber is a Coxeter polytope with angles $\pi/m_{ij}$.
{
\begin{remark}
For Coxeter groups $W$ acting cocompactly on the Euclidean space $\E^n$ or the hyperbolic space $\H^n$ the cell complex structure of the Davis complex of $W$ can be naturally identified with the cell complex constructed from fundamental polytopes of the $W$-action on $\E^n$ or $\H^n$. In particular, this holds for rank $3$ groups with $\frac{1}{m_{12}}+ \frac{1}{m_{13}}+\frac{1}{m_{23}}<1$, $m_{ij}\ne\infty$. If some of $m_{ij}$ are infinite in a rank $3$ group $W$, then $W$ acts on the hyperbolic plane $\H^2$ with a fundamental triangle $P$ of finite volume, and the combinatorics (but not the topology!) of the tessellation of $\H^2$ by copies of $P$ coincides with the combinatorics of the Davis complex of $W$.   

\end{remark}
}

\section{Subdiagram Lemma}
\label{sub}

In this short section we prove the Subdiagram Lemma, which states that the property of having a finite index reflection subgroup
is preserved when we take a special subgroup of the group.

Let $W$ be an arbitrary finitely generated Coxeter group, let $\Sigma(W)$ be the Davis complex of $W$ and $D(1)\subset \Sigma(W)$ be the fundamental chamber of $W$ corresponding to the identity element $1$ of $W$.

Let $V \subset W$ be a reflection subgroup, and denote by $R'$ the set of reflections in $V$. Consider $\qquad\qquad\Sigma(W)\setminus\bigcup_{w \in R'} H_w$. The closure of each connected component of this space in $\Sigma(W)$ is a fundamental domain for the action of $V$ on $\Sigma(W)$. We call the closure of the connected component that contains the identity \textit{principal fundamental domain}. 
Note that the principal fundamental domain is a Coxeter polytope. Conversely, every Coxeter polytope in $\Sigma(W)$ is a fundamental domain of a reflection subgroup of $W$.

\begin{lemma}
\label{special subgroup}
Let $W$ be a Coxeter group with set of generators $S$ such that {{all elements of $S$ are conjugate in $W$}}. Suppose that $V\subsetneq W$ is a reflection subgroup of index $n$, $1 < n < \infty$. Let $W_1$ be a special subgroup of $W$. Then $W_1$ contains a reflection subgroup of index at most $n$.
\end{lemma}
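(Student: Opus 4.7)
The plan is to exploit the natural right action of $W_1$ on the set of cosets $V\backslash W$, which has cardinality $n$. For $w \in W$ the stabilizer of $Vw$ under $(Vw)\cdot g = Vwg$ is exactly $w^{-1}Vw \cap W_1$; by orbit-stabilizer its index in $W_1$ is at most $n$. Thus each $w \in W$ yields a candidate subgroup of $W_1$ of index at most $n$, and two tasks remain: (i) choose $w$ so that the subgroup is proper, and (ii) verify that it is generated by reflections.

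For (i), I argue by contradiction. Suppose $w^{-1}Vw \cap W_1 = W_1$ for every $w \in W$, equivalently $wW_1w^{-1} \subseteq V$ for all $w$. Then $V$ contains every $W$-conjugate of every element of $S_1 := S \cap W_1$. Since $W_1$ is a nontrivial special subgroup (the trivial case being vacuous), $S_1$ is nonempty, and by the hypothesis that all elements of $S$ lie in a single $W$-conjugacy class, these conjugates exhaust $S$. Hence $V \supseteq \langle S \rangle = W$, contradicting $V \subsetneq W$. So we may fix $w$ with $U := w^{-1}Vw \cap W_1 \subsetneq W_1$, a proper subgroup of index at most $n$.

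For (ii) two natural routes are available. The quickest is to invoke Dyer's theorem that the intersection of two reflection subgroups of a Coxeter group is generated by reflections, applied to $w^{-1}Vw$ (a conjugate of the reflection subgroup $V$) and to the special subgroup $W_1$. A more self-contained geometric route, better matching the polytope language introduced in Section~\ref{pril}, is to let $F$ be the principal fundamental domain of $w^{-1}Vw$ and consider $P := F \cap \Sigma(W_1)$. The crux is to verify that $P$ is a Coxeter polytope in $\Sigma(W_1)$, with defining walls given by the restrictions to $\Sigma(W_1)$ of those defining walls of $F$ that meet $\Sigma(W_1)$, and with dihedral angles inherited from $F$. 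Once this is done, the converse correspondence recalled just before the lemma identifies $P$ as the principal fundamental domain of a reflection subgroup of $W_1$---necessarily $U$---and the index bound follows because $P$, being contained in $F$, is tiled by at most $n$ chambers. I expect this geometric identification to be the main technical point; in contrast, properness and the index bound itself are immediate from step (i) and the chamber count respectively.
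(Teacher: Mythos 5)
Your skeleton is reasonable and step (i) is correct: the orbit--stabilizer bound for the right action of $W_1$ on $V\backslash W$, and the conjugation argument showing that $U=w^{-1}Vw\cap W_1$ is proper for some $w$, are both fine, and this is a slightly slicker route to properness and the index bound than the paper's (which instead conjugates $V$ so that a chosen generator $s\in S_1$ is excluded from $V'=wVw^{-1}$, and then counts chambers directly). The problem is step (ii), and neither route you offer closes it. Route (a) invokes a theorem that does not exist: the intersection of two reflection subgroups of a Coxeter group need \emph{not} be generated by reflections. In the infinite dihedral group $\langle s,t\rangle$, realized as the reflections of $\R$ in the integers with $s=r_0$ and $t=r_1$, the reflection subgroups $\langle r_0,r_2\rangle=\langle s,tst\rangle$ and $\langle r_{-1},r_1\rangle=\langle sts,t\rangle$ intersect in the infinite cyclic group of translations by multiples of $4$, which contains no reflections at all. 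What you actually need is the special case that the intersection of a reflection subgroup with a \emph{standard parabolic} $W_1$ is generated by reflections; this is true, but it requires an argument rather than a citation (for instance: every $u\neq 1$ in a reflection subgroup $N$ admits a reflection $t\in N$ with $l(tu)<l(u)$; applying the Strong Exchange Condition to a reduced word for $u$ in $S_1$ shows such a $t$ lies in $W_1$, and induction on length gives $N\cap W_1=\langle T\cap N\cap W_1\rangle$). As written, route (a) is a genuine gap.

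Route (b) is essentially the paper's own proof, but you explicitly leave its crux unproved: you say you ``expect'' the verification that $P=F\cap\Sigma(W_1)$ is a Coxeter polytope to be the main technical point, and that is exactly the step the paper supplies (the walls of $P$ are those defining walls of $F$ whose reflections lie in $W_1$, so its dihedral angles are among the dihedral angles of $F$, hence submultiples of $\pi$). Two smaller remarks: identifying the resulting reflection subgroup with $U$ is neither justified (it is equivalent to the statement you are trying to prove in (ii)) nor needed---once $P$ is a Coxeter polytope containing at least two and at most $n$ chambers of $\Sigma(W_1)$, the lemma follows; and to get the second chamber you should note that $U\subsetneq W_1$ forces some $s\in S_1\setminus w^{-1}Vw$, whence $D(s)\subset F$. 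So the strategy is right and the properness/index bookkeeping is a nice variant, but the decisive step---that the subgroup is generated by reflections, equivalently that $P$ is a Coxeter polytope---is wrong as cited in (a) and deferred in (b).
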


\begin{proof}
Since $V \subsetneq W$ there exists $s_i \in S$ such that $s_i \notin V$. Let $S_1=S \cap W_1$ be a set of generators for $W_1$, and let $s \in S_1$. {{By the assumptions of the lemma, all the generating reflections are conjugate, so all the reflections are conjugate. Thus, we can find $w \in W$ such that $s=ws_iw^{-1}$.}} Then $V'=wVw^{-1}$ is a non-trivial finite index reflection subgroup of $W$. Denote its principal fundamental domain by $F_{V'}$. 

Define $I$ to be $\{w \in W | D(w) \in F_{V'} \}$. Set $I_1=I \cap W_1$. Then $\abs{I_1} \leq \abs{I}=n$, and $F_1 = \bigcup_{w \in I_1} D(w) \subset \Sigma(W_1,S_1)$  is a Coxeter polytope (as $F_{V'}$ is a Coxeter polytope, and the reflections corresponding to the walls of $F_1$ form a subset of the reflections corresponding to the walls of $F_{V'}$) that contains at most $n$ chambers. Note that $s \notin V'$ by the definition of 
$V'$. Hence $D(s) \in F_{V'}$, and therefore $D(s) \in F_1$. Thus, $F_1$ contains at least two chambers ($D(1)$ and $D(s)$), i.e. it is the fundamental domain for a reflection subgroup of $W_1$ of index at most $n$. 
\end{proof}

{
Throughout the paper we will use the following corollary of Lemma~\ref{special subgroup} (we will refer to it as {\em Subdiagram Lemma}). 
}

{
\begin{cor}
\label{subd}
Let $W$ be an odd-angled Coxeter group with set of generators $S$ such that $\Cox_{\div}(W)$ is connected. Suppose that $V\subsetneq W$ is a reflection subgroup of index $n$, $1 < n < \infty$. Let $W_1$ be a special subgroup of $W$. Then $W_1$ contains a {proper} reflection subgroup of index at most $n$.

\end{cor}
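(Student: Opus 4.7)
The plan is to deduce the corollary directly from Lemma~\ref{special subgroup} by verifying its single additional hypothesis, namely that every element of $S$ is conjugate to every other inside $W$. Once this conjugacy statement is established, Lemma~\ref{special subgroup} produces a reflection subgroup of $W_1$ of index at most $n$; moreover, inspection of the proof of that lemma shows that the fundamental domain constructed contains at least two distinct chambers $D(1)$ and $D(s)$, so the subgroup obtained is automatically proper, as required.

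To verify the conjugacy hypothesis, I would first recall the standard dihedral fact: if $m_{ij}$ is odd, say $m_{ij}=2k+1$, then the two generators $s_i$ and $s_j$ are conjugate already inside the dihedral subgroup $\langle s_i,s_j\rangle$, for instance via the identity $(s_is_j)^k s_i (s_js_i)^k = s_j$ (which can be checked by induction using the braid relation, or read off from the description of the conjugacy classes of the dihedral group of order $2m$ with $m$ odd). Because $W$ is odd-angled, every finite label $m_{ij}$ is odd, so every edge of $\Cox_{\div}(W)$ joins a pair of generators that are conjugate in $W$.

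Finally, I would invoke the connectedness of $\Cox_{\div}(W)$: any two generators $s,s'\in S$ are linked in $\Cox_{\div}(W)$ by a path of edges, and along each edge the endpoints represent conjugate generators of $W$, so by transitivity of the conjugacy relation all elements of $S$ lie in a single conjugacy class of $W$. The hypothesis of Lemma~\ref{special subgroup} is thus satisfied and the corollary follows. The only computational step is the dihedral conjugacy identity, which is routine, so I do not expect any real obstacle.
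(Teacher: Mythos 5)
Your proposal is correct and follows essentially the same route as the paper: the paper also reduces the corollary to Lemma~\ref{special subgroup} by observing that generators joined by odd-labelled edges are conjugate (citing \cite[Lemma 3.3.3]{Davis}), so that connectedness of $\Cox_{\div}(W)$ puts all of $S$ in one conjugacy class. Your only deviation is proving the dihedral conjugacy $(s_is_j)^k s_i (s_js_i)^k = s_j$ for $m_{ij}=2k+1$ by hand instead of quoting the reference, and that identity is correct.
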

}

{ 
\begin{proof}
Two generating reflections $s_0,s_k\in S$ are conjugate in $W$ if and only if there is a sequence $s_0,s_1,\dots,s_{k-1},s_k$ of elements of $S$ such that for every $i\in[0,k-1]$ the orders $m_{ii+1}$ are odd (see e.g~\cite[Lemma 3.3.3]{Davis}). Therefore, Lemma~\ref{special subgroup} is applicable if the following ``odd version'' of the Coxeter diagram of $(W,S)$ is connected: remove from the Coxeter diagram all edges labeled by even numbers or by infinity. In the case of odd-angled groups, this diagram coincides with  $\Cox_{\div}(W)$. 
\end{proof}
}

\section{Examples of subgroups in odd-angled groups}
\label{examples}
In this section we {construct} three series of examples of finite index reflection subgroups.
Combined with the results of Section~\ref{abs}, this will provide the classification of all odd-angled groups with finite index reflection subgroups (see also Section~\ref{min}).

\begin{lemma}
\label{553}
Let $k_{13}$ and $k_{23}$ be positive integers not divisible by $2$ or $3$. Then the group $\qquad\qquad$ $W=\langle s_1,s_2,s_3 \ | \ s_i^2=(s_1s_2)^{3k_{12}}=(s_1s_3)^{5k_{13}}=(s_2s_3)^{5k_{23}}=1\rangle$ has a finite index reflection subgroup.
\end{lemma}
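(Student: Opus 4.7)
The plan is to realize $W$ as a discrete reflection group acting on the hyperbolic plane $\H^2$ and then to exhibit an explicit Coxeter polygon $P\subset\H^2$ tiled by finitely many copies of the fundamental triangle $T=D(1)$. Since $\tfrac{1}{3k_{12}}+\tfrac{1}{5k_{13}}+\tfrac{1}{5k_{23}}\le\tfrac{11}{15}<1$, the triangle $T$ (with angles $\tfrac{\pi}{3k_{12}},\tfrac{\pi}{5k_{13}},\tfrac{\pi}{5k_{23}}$) is hyperbolic, and by the remark at the end of Section~\ref{pril} the Davis complex $\Sigma(W,S)$ is isometric to $\H^2$ tessellated by the $W$-orbit of $T$. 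Under this identification a finite-index reflection subgroup of $W$ corresponds to a Coxeter polygon $P$ built from a finite number of tiles of the tessellation, with index equal to the number of tiles; producing such a $P$ is exactly what is needed.

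The guiding arithmetic observation is that $m_{12}=3k_{12}$, $m_{13}=5k_{13}$ and $m_{23}=5k_{23}$, so $3\mid m_{12}$ and $5\mid m_{13},m_{23}$. Since the three vertices of $T$ lie in three distinct $W$-orbits (each orbit meets the fundamental domain in exactly one point), at any vertex of the tessellation all incident triangles share the same corner-type of $T$, and each contributes the same corner-angle $\pi/m$ with $m\in\{m_{12},m_{13},m_{23}\}$. At a boundary vertex of $P$ where exactly $r$ copies of $T$ meet, the dihedral angle of $P$ equals $r\pi/m$, which is a submultiple of $\pi$ precisely when $r\mid m$. The natural legal choices are $r=3$ at a vertex of type $C$ (apex angle $\pi/k_{12}$) and $r=5$ at a vertex of type $A$ or $B$ (apex $\pi/k_{23}$ or $\pi/k_{13}$); an interior vertex of $P$ requires a full turn $r=2m$. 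I would therefore assemble $P$ by gluing together a $3$-wedge placed around a $C$-vertex with $5$-wedges placed around nearby $A$- and $B$-vertices, adding further $W$-translates of $T$ as needed to close up the boundary while keeping every boundary angle of one of the legal forms.

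The main obstacle is the combinatorial bookkeeping: the chosen arrangement must actually fit together in $\H^2$ to form a simply-connected convex polygon, every boundary vertex must realise one of the legal angles listed above (equivalently, have the correct number of wedges meeting there), and every tessellation vertex lying strictly inside $P$ must be fully surrounded by tiles of $P$. These requirements are linked by an Euler-type identity relating the number of triangles in $P$, the boundary length, and the counts of boundary and interior vertices of each type; this identity will both guide the construction and serve as a consistency check. Once a suitable $P$ has been produced and the conditions above verified, the reflections in its sides generate the required finite-index reflection subgroup of $W$, whose index equals the number of copies of $T$ composing $P$.
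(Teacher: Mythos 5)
Your framework is the same as the paper's (find a Coxeter polygon tiled by finitely many copies of the fundamental triangle; the divisibility $3\mid m_{12}$, $5\mid m_{13},m_{23}$ makes the local angle conditions $r\pi/m\in\{\pi/k\}$ available), and your reduction to a tessellation of $\H^2$ and your analysis of admissible boundary angles are correct. But there is a genuine gap: you never actually produce the polygon. The sentences ``I would therefore assemble $P$ by gluing \dots adding further $W$-translates of $T$ as needed to close up the boundary'' and ``Once a suitable $P$ has been produced and the conditions above verified'' defer exactly the step that constitutes the proof. An Euler-type count is only a necessary-condition consistency check; it does not show that a finite, convex, simply connected union of tiles with every boundary angle of the form $r\pi/m$, $r\mid m$, and every interior vertex fully surrounded actually exists. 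Without an explicit configuration (or an existence argument), the lemma is not established.

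What fills the gap in the paper is a concrete construction: take the block $P'$ consisting of the six chambers $D(1)$, $D(s_3)$, $D(s_1)$, $D(s_1s_3)$, $D(s_1s_3s_1)$, $D(s_1s_3s_2)$, and set $P=\bigcup_{0\le k<3k_{12}}(s_2s_1)^kP'$, i.e.\ rotate $P'$ around the $m_{12}$-vertex by the rotation $s_2s_1$ of order $3k_{12}$. This closes up automatically (the $m_{12}$-vertex becomes the unique interior vertex, surrounded by all $6k_{12}$ incident triangles), and the boundary angles, read in $6$-tuples, are $\bigl(\tfrac{\pi}{3k_{12}},\tfrac{\pi}{k_{13}},\tfrac{\pi}{5k_{23}},\tfrac{\pi}{k_{12}},\tfrac{\pi}{5k_{13}},\tfrac{\pi}{k_{23}}\bigr)$, all submultiples of $\pi$; hence $P$ is a Coxeter polygon of $18k_{12}$ chambers and yields a reflection subgroup of index $18k_{12}$. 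One must also note the degenerate cases where some $k_{ij}=1$: then certain angles equal $\pi$ and the polygon simply has fewer vertices, but remains a Coxeter polygon. To complete your argument you would need to supply an explicit configuration of this kind (or an equivalent existence proof), not just the list of constraints it must satisfy.
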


\begin{proof}
In order to show the existence of a finite index reflection subgroup, it suffices to construct a Coxeter polytope in $\Sigma(W,S)$ that contains finitely many, but at {least} $2$ chambers. This polytope is then the fundamental domain of a finite index reflection subgroup. If $k_{12}=1$, a possible choice for the fundamental chamber $P$ of the subgroup is shown in Fig~\ref{ex553}, right: it is obtained by gluing all rotation images of the shaded domain. 

\begin{figure}[!h]
\begin{center}
\psfrag{5}{\scriptsize $5$}
\psfrag{1}{\scriptsize $D(1)$}
\psfrag{s1}{\scriptsize $D(s_1)$}
\psfrag{s2}{\scriptsize $D(s_2)$}
\psfrag{1_}{\tiny $\pi\!/\!k_{23}$}
\psfrag{2}{\tiny $\pi\!/\!5k_{13}$}
\psfrag{3}{\tiny $\pi\!/\!k_{12}$}
\psfrag{4}{\tiny $\pi\!/\!5k_{23}$}
\psfrag{5_}{\tiny $\pi\!/\!k_{13}$}
\psfrag{6}{\tiny $\pi\!/\!3k_{12}$}
\psfrag{7}{\tiny $\pi\!/\!3k_{12}$}
\psfrag{p1}{\scriptsize 1}
\psfrag{p2}{\scriptsize 2}
\psfrag{p3}{\scriptsize 3}
\raisebox{23pt}{\epsfig{file=./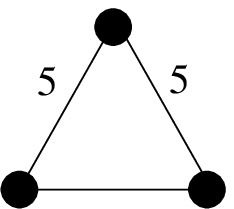,width=0.16\linewidth}}
\qquad \qquad \qquad
\epsfig{file=./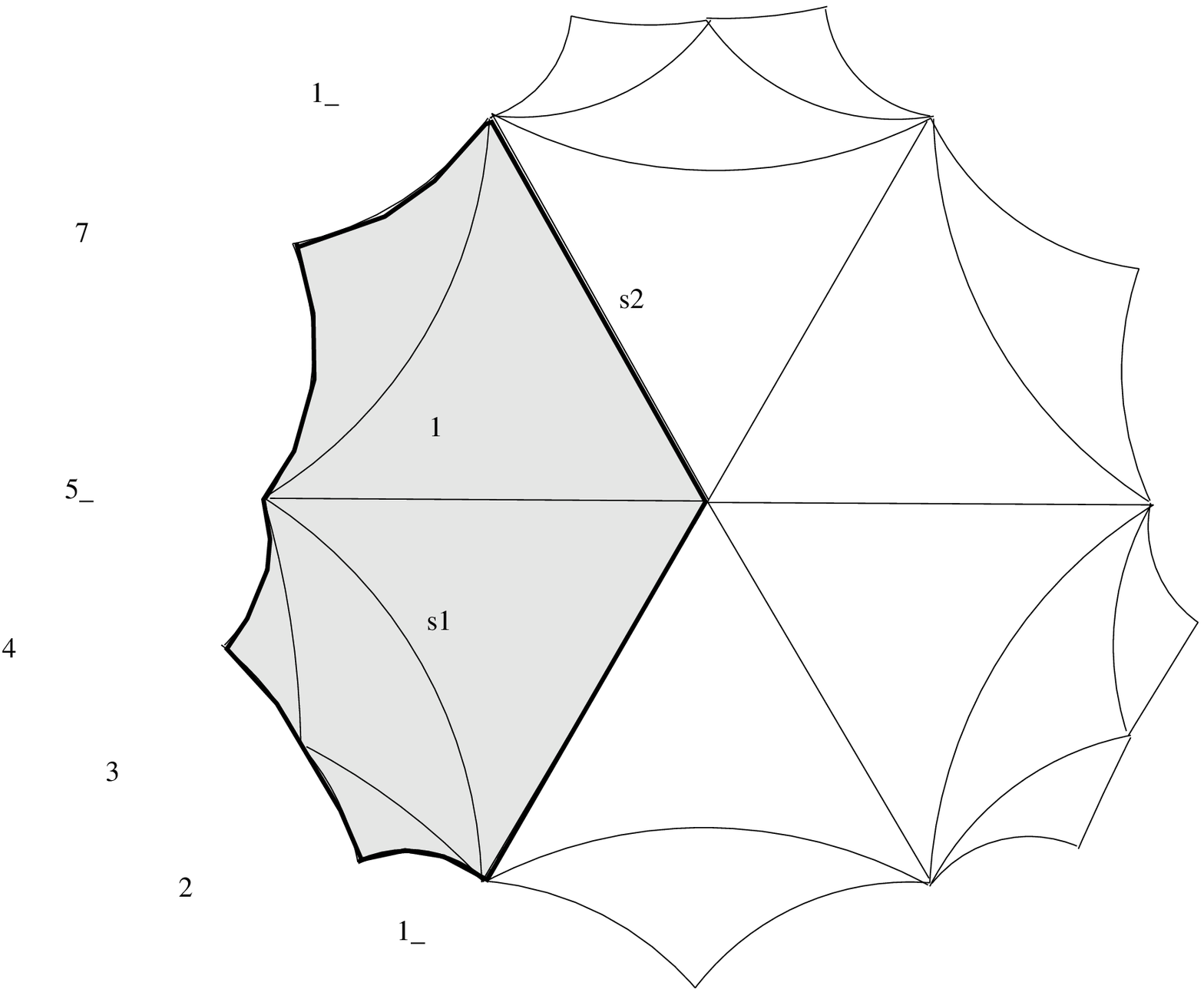,width=0.30\linewidth}
\caption{$\Cox_{\div}(W)$, and a fundamental chamber for an index $18$ subgroup of $W$ in case $k_{12}=1$}
\label{ex553}
\end{center}
\end{figure}

For the general case let (the shaded domain) $P'$ be the union of the chambers $D(1)$, $D(s_3$), $D(s_1)$, $D(s_1s_3)$, $D(s_1s_3s_1)$, $D(s_1s_3s_2)$, and take $P$ as the union of (the rotations) $(s_2s_1)^kP'$ for $0 \le k < 3k_{12}$. Then $P$ is the union of $18k_{12}$ chambers. 

For $k_{12},k_{13},k_{23}>1$ the polytope $P$ is an ($18k_{12}$)-gon. Going around the polygon counterclockwise \quad (see Fig.~\ref{ex553}), the angles are subdivided into $6$-tuples and the values in every $6$-tuple are $\left(\frac{\pi}{3k_{12}},\frac{\pi}{k_{13}},\frac{\pi}{5k_{23}},\right.$ $\left.\frac{\pi}{k_{12}},\frac{\pi}{5k_{13}},\frac{\pi}{k_{23}}\right)$. As these are submultiples of $\pi$, $P$ is indeed a Coxeter polytope, {so we obtain a reflection subgroup of index $18k_{12}$}. 

If some of $k_{12},k_{13}$ or $k_{23}$ are equal to 1, then some of the above angles are $\pi$, i.e. the number of vertices of the polygon $P$ decreases, but $P$ is still a Coxeter polytope. {More precisely, every angle of size $\frac{\pi}{k_{13}}, \frac{\pi}{k_{23}}$ or $\frac{\pi}{k_{12}}$ appears exactly $3k_{12}$ times, so with every of the numbers $k_{12},k_{13},k_{23}$ equal to one the number of angles decreases by $3k_{12}$. For example, if $k_{12}=k_{13}=k_{23}=1$, then the convex polytope $P$ is a polygon with six angles $\pi/5$ and three angles $\pi/3$.}
\end{proof}

{
\begin{remark}
\label{evenseries}
If we allow the angles to be even, the construction from Lemma~\ref{553} gives rise to many other series of finite index reflection subgroups. For example, a group with presentation
$$W=\langle s_1,s_2,s_3 \ | \ s_i^2=(s_1s_2)^{10k_{12}}=(s_1s_3)^{3k_{13}}=(s_2s_3)^{7k_{23}}=1\rangle$$
has a reflection subgroup of index $27k_{12}$ (see Fig.~\ref{3710}). This example suggests that, if angles are allowed to be even, the answer depends on the more subtle interplay of differenet divisors of the orders $m_{ij}$ of the products of generators.       
\end{remark}
}

\begin{figure}[!h]
\begin{center}
\psfrag{5}{\scriptsize $5$}
\psfrag{1}{\scriptsize $D(1)$}
\psfrag{s1}{\scriptsize $D(s_1)$}
\psfrag{s2}{\scriptsize $D(s_2)$}
\epsfig{file=./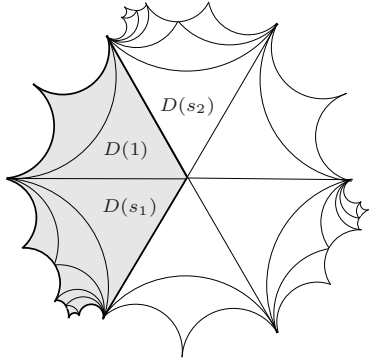,width=0.30\linewidth}
\caption{{A fundamental chamber for an index $27$ subgroup of $W$ in case $k_{12}=1$, see Remark~\ref{evenseries}}}
\label{3710}
\end{center}
\end{figure}

\begin{lemma}
\label{k}
If at most one edge of $\Cox_{\div}(W)$ is neither simple nor absent then $W$ has a finite index  reflection subgroup.

\end{lemma}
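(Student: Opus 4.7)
The plan is to prove the lemma by constructing, for each Coxeter group $W$ satisfying the hypothesis, an explicit Coxeter polytope $P\subset\Sigma(W,S)$ realized as a finite union of chambers $D(w_1),\dots,D(w_n)$. Any such $P$ is automatically the principal fundamental domain of a proper finite-index reflection subgroup of $W$, as discussed before Lemma~\ref{special subgroup}, so producing $P$ suffices.

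I would first dispose of the low-rank cases. For $|S|=2$ the group $W$ is dihedral of order $2m_{12}$ (finite if $m_{12}<\infty$, otherwise infinite dihedral); a single reflection generates a proper reflection subgroup of finite index $m_{12}$ in the finite case, while in the infinite dihedral case any finite index can be realized by taking two reflections whose axes are far apart along the translation direction.

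For $|S|\geq 3$, the construction generalizes that of Lemma~\ref{553}. Since at most one edge of $\Cox_{\div}(W)$ is multiple, there exists at least one simple edge $\{s_i,s_j\}$, i.e.\ with $m_{ij}$ an odd multiple of $3$. Let $v\in D(1)$ be the vertex stabilized by $\langle s_i,s_j\rangle$ and let $\rho=(s_is_j)^{m_{ij}/3}$ be the corresponding order-$3$ rotation of $\Sigma$ around $v$. I would assemble a patch $P'\subset\Sigma$ by taking $D(1)$ together with neighboring chambers across each simple or absent edge incident to $v$, and---if the unique multiple edge $\{s_a,s_b\}$ is present---a two-layer region near the vertex $v_{ab}$ in direct analogy to the construction used in the proof of Lemma~\ref{553}. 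Setting $P=\bigcup_{k=0}^{2}\rho^k\,P'$ then yields a rotationally symmetric union of finitely many chambers, and the candidate reflection subgroup is the one generated by reflections in the boundary walls of $P$.

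The heart of the proof is verifying that every dihedral angle of $P$ is an integer submultiple of $\pi$. At each vertex $u$ of $P$ of type $(a,b)$ the local angle equals $N_u\cdot\pi/m_{ab}$, where $N_u$ is the number of chambers of $P$ meeting at $u$; hence the check reduces to showing that $N_u$ divides $m_{ab}$ (making the angle $\pi/k$ with $k=m_{ab}/N_u$), or that $N_u=2m_{ab}$ (meaning $u$ is interior to $P$). The hypothesis that at most one edge is multiple is used precisely to confine the delicate part of the computation to a single $\langle\rho\rangle$-orbit of vertices, namely those of type $(a,b)$, which is handled exactly as in Lemma~\ref{553}; at every other vertex, the order-$3$ symmetry forces $N_u\in\{3,6\}$, and this divides $m_{ab}$ because $3\mid m_{ab}$ for every simple edge. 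The main technical obstacle I expect is the bookkeeping at codimension-two faces of $P$ for $|S|\geq 4$, where the Davis complex is higher-dimensional and one must check the submultiple condition beyond the codimension-one facets; should this prove cumbersome in full generality, I would split the argument into the two subcases ``no multiple edge'' and ``exactly one multiple edge'' and handle each by the corresponding specialization of the construction above.
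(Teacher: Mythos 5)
Your overall strategy (exhibit a Coxeter polytope made of finitely many, at least two, chambers) is the right reduction, but the construction you propose does not work as stated, and it differs from the paper's in a way that matters. First, a case gap: ``at most one multiple edge'' does not imply the existence of a simple edge, since edges may be absent; the configurations where all edges are absent, or where the unique multiple edge is the only non-absent edge, are not covered by your construction (they are easy -- e.g.\ $D(1)\cup D(s)$ works when all edges are absent -- but you must say so). Second, and more seriously, the central patching argument does not close up. Your rotation $\rho=(s_is_j)^{m_{ij}/3}$ has angle $2\pi/3$ at the vertex $v$ fixed by $\langle s_i,s_j\rangle$, while your patch $P'$ (one chamber plus a few neighbours) occupies only an angle $c\,\pi/m_{ij}$ at $v$ with $c$ small; hence for $m_{ij}>3$ the three copies $\rho^kP'$ leave gaps around $v$, so $\bigcup_{k=0}^{2}\rho^kP'$ is not convex (and if one forces $v$ onto the boundary, its angle is in general not an integer submultiple of $\pi$). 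Note that Lemma~\ref{553}, which you cite as the model, uses the \emph{full} rotation subgroup of order $m_{12}$ about that vertex, not an order-$3$ subgroup. Finally, your verification step is not sound: the $\mathbb{Z}/3$-symmetry only groups the vertices $u\neq v$ into orbits of size three, it does not force $N_u\in\{3,6\}$; and $N_u=6$ can never divide the odd label $m_{ab}$, so even the criterion you state would fail. The placement of the ``two-layer region near $v_{ab}$'' is also undefined when the multiple edge is not incident to the chosen simple edge.

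For comparison, the paper's proof centres the construction on the multiple edge itself (or on any simple edge if there is none): it takes the full dihedral rosette $P_1=\bigcup_{w\in W_{12}}D(w)$ around the vertex fixed by $W_{12}=\langle s_1,s_2\rangle$, observes that every dihedral angle of $P_1$ is either an angle of a single chamber or of the form $2\pi/3k$ (here the hypothesis enters: every other finite label is divisible by $3$), and then glues on all chambers adjacent to the even-length elements of $W_{12}$. After this one extra layer every angle either lies in one chamber or is dissected into exactly three pieces of size $\pi/3k$, so the result is a Coxeter polytope with finitely many (at least two) chambers, uniformly in the rank and with no separate codimension-two bookkeeping. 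If you want to salvage your approach, replace your order-$3$ rotation by the full rotation (or dihedral) group of the chosen edge, as in Lemma~\ref{553} and in the paper's argument, and redo the angle count chamber by chamber rather than by appealing to symmetry.
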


\begin{proof}
{ If $\Cox_{\div}(W)$ has only one vertex the statement is obvious. If all edges are absent the statement is also trivial: for every $s\in S$ the union $D(1)\cup D(s)$ is a Coxeter polytope. 

Let $v_1$ and $v_2$ be vertices of $\Cox_{\div}(W)$ joined by a multiple edge, if any (if all non-absent edges are simple, take any pair of vertices joined by a simple edge).}
Let $s_1$ and $s_2$ be the corresponding reflections and $W_{12}=\langle s_1,s_2\rangle$ be the special subgroup generated by these reflections. 
Consider the polytope $$P_1=\bigcup\limits_{w\in W_{12}} D(w).$$
Let $\angle\alpha \beta$ be a dihedral angle of $P_1$ (formed by the facets $\alpha$ and $\beta$). 
It can be  of one of the following two types:

1) both $\alpha$ and $\beta$ contain facets of one copy $D(w)$ of $D(1)$; then $\angle\alpha \beta$ is a submultiple of $\pi$
(since $D(w)$ is a Coxeter polytope);

2)  $\alpha$ and $\beta$ are not contained in defining walls of the same $D(w)$; then they are contained in walls of two adjacent copies of the fundamental chamber, and the dihedral angle is $2\pi/3k_{i1}$ or $2\pi/3k_{i2}$, where $3k_{ij}=m_{ij}$ for $j=1$ or $2$.

Now, to construct a Coxeter polytope, we need to add some additional copies of $D(1)$.
Namely, to each $D(w)$ such that $w\in W_{12}$ is of even length (we denote this { subgroup} of $W_{12}$ by $W_{12}^+$), we glue all chambers adjacent to $D(w)$: define 
$$P=P_1\bigcup \left( \bigcup \limits_{w\in W_{12}^+} P_w    \right),$$
where $P_w$ is the union of all chambers adjacent to $D(w)$.
It is easy to see that $P$ is a Coxeter polytope (each of its dihedral angles either belongs to one chamber or is dissected into exactly 3 parts
of size $\pi/3k_{ij}$ for some $i,j$), see Fig.~\ref{ex_k} for an example.  { It is also clear that $P$ contains finitely many (but at least two) chambers, so it defines a finite index reflection subgroup.} 
\end{proof}

\begin{figure}[!h]
\begin{center}
\psfrag{k}{\scriptsize $k$}
\epsfig{file=./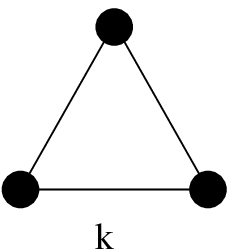,width=0.14\linewidth}
\qquad \qquad
\psfrag{k1}{\scriptsize $k$}
\psfrag{3k2}{\scriptsize $$}
\psfrag{3k3}{\scriptsize $$}
\epsfig{file=./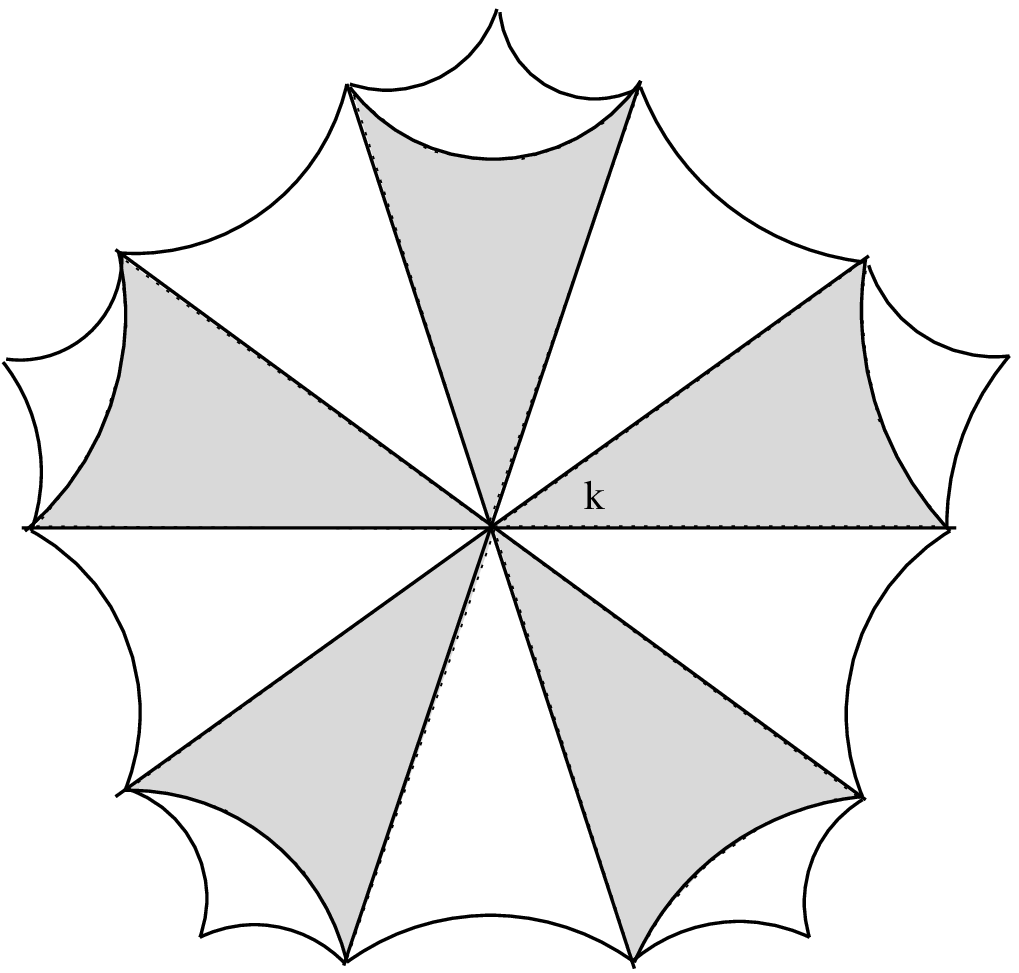,width=0.175\linewidth}
\caption{Example: $\Cox_{\div}(W)$ for a rank $3$ group and fundamental chamber $P$ constructed in the proof of Lemma~\ref{k} for the case $k=5$.}
\label{ex_k}
\end{center}
\end{figure}

Let us now generalize the example described in Lemma~\ref{553} to higher rank groups. Denote the diagram of the group described in Lemma~\ref{553} by $(5,5,3)$ { (here we assume $k_{12}$ to be odd)}.

\begin{lemma}
\label{55}
Suppose that $\Cox_{\div}(W)$ contains $(5,5,3)$ as a subdiagram and all the remaining edges in  $\Cox_{\div}(W)$  are simple
(some vertices may not be joined). Then $W$ has a finite index  reflection subgroup.

\end{lemma}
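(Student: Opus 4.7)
The approach would be to extend the explicit fundamental-chamber construction from Lemma~\ref{553} to the higher-rank group $W$. Label the generators so that $s_1,s_2,s_3$ carry the $(5,5,3)$ triangle, with $m_{12}=3k_{12}$, $m_{13}=5k_{13}$, $m_{23}=5k_{23}$ (and $k_{12}$ odd), and let $s_4,\ldots,s_n$ be the remaining generators, each joined to the rest only by simple edges ($m=3$) or by absent edges. Write $W_{123}=\langle s_1,s_2,s_3\rangle$ for the special subgroup carrying the triangle.

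First I would form exactly the same set $I\subset W_{123}$ of $18k_{12}$ elements as in Lemma~\ref{553}, namely the rotations $(s_2s_1)^k w'$ with $0\le k<3k_{12}$ and $w'\in\{1,s_3,s_1,s_1s_3,s_1s_3s_1,s_1s_3s_2\}$, and set $P_0=\bigcup_{w\in I}D(w)$, now viewed inside $\Sigma(W)$ (rather than $\Sigma(W_{123})$). Each chamber $D(w)$ now carries additional facets corresponding to the new generators $s_4,\ldots,s_n$, so the chambers ``stick out'' of $\Sigma(W_{123})$. I would then enlarge $P_0$ by the capping construction used in Lemma~\ref{k}: for every chamber $D(w)\subset P_0$ and every facet of $D(w)$ lying on a wall $H_r$ with $r\notin W_{123}$ whose opposite chamber $D(w r)$ is not yet in $P_0$, I would consider gluing in the neighbouring chambers. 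Because every edge of $\Cox_{\div}(W)$ incident to an ``extra'' generator is either simple or absent, the local geometry around such a facet is governed by a dihedral group of order $6$ or is trivial, which is the setting handled by Lemma~\ref{k}.

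To finish the proof I would verify, facet by facet, that the resulting finite union $P$ is a Coxeter polytope in $\Sigma(W)$ by checking three types of dihedral angles: (i) angles between two walls of reflections in $W_{123}$ are determined by a rank-2 finite subgroup contained in $W_{123}$, so they coincide with the angles already verified in Lemma~\ref{553} to be submultiples of $\pi$; (ii) angles between a $W_{123}$-wall and a wall associated to an extra generator $s_i$ come from a dihedral subgroup $\langle s_i,r\rangle$ whose order is a divisor of an $m_{ij}=3\ell$ (hence the angle is $\pi/3$ or a submultiple thereof, exactly as in Lemma~\ref{k}); (iii) angles between two walls of extra generators are either $\pi/3$ (simple edge) or do not occur (absent edge, disjoint walls). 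Since $P$ is then a Coxeter polytope containing at least two and only finitely many chambers, it is the fundamental domain of a finite-index proper reflection subgroup.

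The main obstacle is (ii)–(iii): one must ensure the capping is carried out consistently so that every ``interior'' extra-generator facet is filled in, every newly exposed outer facet meets the rest of $P$ at a submultiple of $\pi$, and no non-convex corners are created—in particular, the $3$-fold subdivisions of angles that appear in Lemma~\ref{k} must mesh correctly with the angles $\pi/k_{12}, \pi/k_{13}, \pi/k_{23}, \pi/5k_{13}, \pi/5k_{23}, \pi/3k_{12}$ already present along the boundary of $P_0$. Once this compatibility check is completed, the remaining arguments are direct adaptations of Lemmas~\ref{553} and~\ref{k}.
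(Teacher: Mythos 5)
Your overall strategy is the same as the paper's: lift the fundamental domain $P'$ of Lemma~\ref{553} from $\Sigma(W_{123})$ to a polytope $P_1=\bigcup_{w\in I}D(w)$ in $\Sigma(W)$, and then repair the non-Coxeter angles by the attachment trick of Lemma~\ref{k}. However, the step you defer as a ``compatibility check'' is precisely the content of the proof, and the capping rule you actually describe does not work. Note first that your item (ii) is wrong as stated: before any capping, a dihedral angle of $P_1$ formed at a codimension-$2$ face lying on the wall between two \emph{adjacent} chambers $D(w),D(ws)\subset P_1$ and on a wall of an extra generator $s_j$ equals $2\pi/3k_{ij}$ (two copies of the chamber angle $\pi/m_{ij}$, $m_{ij}=3k_{ij}$), which is not a submultiple of $\pi$ --- this is exactly why capping is needed, both here and in Lemma~\ref{k}. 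If, as you propose, you glue in the outside neighbour across \emph{every} facet of \emph{every} chamber of $P_0$ lying on a wall of a new generator, then at such a face you add one chamber on each of the two sides, producing a dihedral angle $4\pi/3k_{ij}$, which is never an integer submultiple of $\pi$; so the naive rule destroys the Coxeter property rather than restoring it, and you have given no alternative rule.

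The missing idea is the parity selection device of Lemma~\ref{k}. One first normalizes $P'$ (as the paper does, using the explicit picture of Lemma~\ref{553}) so that every chamber of $P'$ having a neighbour outside $P'$ inside $\Sigma(W_{123})$ corresponds to an element of \emph{odd} length, and then attaches to $P_1$ all neighbours of the chambers $D(w)$ with $w$ of \emph{even} length only (the analogue of capping only $W_{12}^+$ in Lemma~\ref{k}). Since adjacent chambers have lengths of opposite parity, each bad angle $2\pi/3k_{ij}$ sits between one even and one odd chamber, so it receives exactly one additional chamber and becomes $3\cdot\pi/3k_{ij}=\pi/k_{ij}$, while the normalization guarantees that the attachment adds no further $W_{123}$-chambers and hence does not disturb the angles already inherited from $P'$. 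Without specifying this rule (or an equivalent one) and checking it, your argument has a genuine gap at its central step.
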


\begin{proof}
Let $v_1v_2$ and $v_1v_3$ be the edges with label $5$ of $\Cox_{\div}(W)$. Consider the subgroup $W_{123}$ of rank $3$ generated by reflections $s_1,s_2,s_3$ corresponding to vertices $v_1,v_2,v_3$ respectively. Let $\Sigma(W_{123})\subset\Sigma(W)$ be the subcomplex spanned by all cells corresponding to subgroups of $W_{123}$. According to Lemma~\ref{553}, $W_{123}$ has a finite index reflection subgroup $V'$. Denote by $P'$ the principal fundamental chamber of $V'$ (in $\Sigma(W_{123})$) constructed in Lemma~\ref{553}. We assume without loss of generality that all the chambers of $P'$ that contain a neighboring chamber outside $P'$ correspond to group elements of odd length (see Fig.~\ref{ex553}).

Denote $I=\{w\in W_{123}\,|\,D(w)\subset P'\}$. We can consider $I$ as a subset of $W$, so we can define a polytope $P_1\subset\Sigma(W,S)$ by  
$$P_1=\bigcup\limits_{w\in I} D(w).$$
It is easy to see that $P_1$ is a convex polytope, and the angles of $P_1$ are either submultiples of $\pi$ or of the type $2\pi/3k_{ij}$ { for some integers $k_{ij}$}. Now we use the same trick as in the proof of Lemma~\ref{k}: attach to $P_1$ all the neighbors of all chambers $D(w)$ with $w$ of even length. The procedure results in a Coxeter polytope $P$. 
\end{proof}

\section{Groups without finite index reflection subgroups}
\label{abs}

\subsection{Technical tools}
\label{tools}
In this section we list technical lemmas used later to  prove the absence of finite index reflection subgroups in some groups.

We will use the following notation: $(W,S)$ is the odd-angled Coxeter system under consideration,
$V\subset W$ is a finite index reflection subgroup, $P$ is the corresponding principal fundamental domain of the $V$-action on $\Sigma(W)$, and $I$ is the set of elements of $W$ such that the corresponding chambers are contained in $P$ { (recall that $P$ is a Coxeter polytope)}. 

\begin{lemma}
\label{2copies} Suppose $s_i,s_j, s_k$ are distinct elements of $S$ such that $m_{ij}, m_{ik}, m_{kj}$ are odd or infinite. 
Then 

1) $s_i$ and $s_js_ks_j$ generate an infinite dihedral group;

{
2) the wall $H_{s_i}$ separating $D(1)$ from $D(s_i)$ does not intersect the wall $H_{s_is_js_ks_js_i}$ separating $D(s_is_j)$ from $D(s_is_js_k)$.
}
\end{lemma}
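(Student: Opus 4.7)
The plan is to reduce (2) to (1) and then to prove (1) by a short computation in the Tits (geometric) representation of the rank-$3$ special subgroup $W'=\langle s_i,s_j,s_k\rangle$.

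First, I observe that the reflection $s_is_js_ks_js_i$ appearing in (2) is the $s_i$-conjugate of $s_js_ks_j$, so the dihedral subgroups $\langle s_i,s_is_js_ks_js_i\rangle=s_i\langle s_i,s_js_ks_j\rangle s_i$ and $\langle s_i,s_js_ks_j\rangle$ are conjugate in $W$ and therefore have the same order. Combined with the fact recalled in Section~\ref{walls-section} that two walls $H_u,H_v$ intersect if and only if the dihedral group $\langle u,v\rangle$ is finite, assertion (2) follows immediately once (1) is established.

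For (1), I restrict to $W'=\langle s_i,s_j,s_k\rangle$ and work in its Tits representation on a real $3$-dimensional space with basis $\alpha_i,\alpha_j,\alpha_k$ equipped with the symmetric bilinear form $B(\alpha_p,\alpha_q)=-\cos(\pi/m_{pq})$ (with the convention $\cos(\pi/\infty):=1$), where each $s_p$ acts as the $B$-reflection through $\alpha_p^\perp$. A classical criterion asserts that two reflections with unit roots $\alpha,\beta$ generate a finite dihedral group if and only if $|B(\alpha,\beta)|<1$. The reflection $s_js_ks_j$ has unit root $\beta=s_j(\alpha_k)=\alpha_k+2\cos(\pi/m_{jk})\alpha_j$, and a one-line expansion gives
$$B(\alpha_i,\beta)=-\cos(\pi/m_{ik})-2\cos(\pi/m_{ij})\cos(\pi/m_{jk}).$$
Under the hypothesis that each $m_{pq}$ is either odd with $m_{pq}\ge 3$, or infinite, every factor $\cos(\pi/m_{pq})$ lies in $[1/2,1]$, so the right-hand side is bounded above by $-\tfrac12-2\cdot\tfrac12\cdot\tfrac12=-1$. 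Hence $\langle s_i,s_js_ks_j\rangle$ is infinite, proving (1) and therefore (2).

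The only step requiring a brief justification is the application of the finiteness criterion to the non-simple root $\beta=s_j(\alpha_k)$: faithfulness of the Tits representation and the observation that $s_\alpha s_\beta$ fixes $\alpha^\perp\cap\beta^\perp$ pointwise reduce the order of $s_\alpha s_\beta$ to its order on $\mathrm{span}(\alpha,\beta)$, which is finite precisely when the $2\times 2$ Gram matrix on that plane is positive definite, i.e.\ when $|B(\alpha,\beta)|<1$. Equality $B(\alpha_i,\beta)=-1$ occurs exactly for $(m_{ij},m_{ik},m_{jk})=(3,3,3)$; this matches the Euclidean $\tilde{A}_2$ picture, in which $s_is_js_ks_j$ is a non-trivial translation of $\E^2$.
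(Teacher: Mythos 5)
Your proof is correct, but it takes a genuinely different route from the paper. The reduction of (2) to (1) is the same in both arguments: $s_is_js_ks_js_i$ is the $s_i$-conjugate of $s_js_ks_j$, and walls intersect exactly when the corresponding dihedral subgroup is finite (as recalled in Section~\ref{walls-section}). For (1), however, the paper argues geometrically inside the rank~$3$ special subgroup: in the case $m_{ij}=m_{ik}=m_{jk}=3$ the walls $H_{s_i}$ and $H_{s_js_ks_j}$ are parallel lines in the Euclidean picture, and in all remaining cases an intersection of these walls would produce a hyperbolic triangle tessellated by odd-angled Coxeter triangles, which is ruled out by citing \cite{F}. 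You instead compute in the Tits representation: with $\beta=s_j(\alpha_k)=\alpha_k+2\cos(\pi/m_{jk})\alpha_j$ one gets $B(\alpha_i,\beta)=-\cos(\pi/m_{ik})-2\cos(\pi/m_{ij})\cos(\pi/m_{jk})\le -1$ because every $m_{pq}$ is at least $3$ or infinite, and a root pairing $\le -1$ forces $s_i\,(s_js_ks_j)$ to have infinite order (parabolic or hyperbolic on the span of the two roots, with equality $-1$ occurring only for $(3,3,3)$, your $\tilde A_2$ case). This buys you a self-contained, purely linear-algebraic proof that avoids the external classification result of \cite{F} and in fact works for all skew-angled groups ($m_{pq}\ge 3$, not necessarily odd), which is slightly more general than the statement; the paper's proof stays within the Davis-complex/triangle-group framework used throughout the rest of the argument. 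One small caveat: your criterion ``finite dihedral iff $|B(\alpha,\beta)|<1$'' is, as stated for arbitrary unit vectors, only correct in the direction you actually use ($|B|\ge 1$ implies infinite order); the converse for genuine roots of a Coxeter system needs the nontrivial theory of dihedral reflection subgroups, but since your computation lands in the region $B\le -1$ this does not affect the validity of your argument.
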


\begin{proof}
{We will show that the walls $H_{s_i}$ and $H_{s_js_ks_j}$ do not intersect, which is equivalent to the first assertion. The second assertion follows since the group generated by $s_i$ and $s_js_ks_j$ coincides with the group generated by $s_i$ and $s_is_js_ks_js_i$.}
 
Consider the special subgroup $W_1\subset W$ generated by  $s_i,s_j$ and $s_k$. We distinguish two cases.

\noindent
Case 1: $m_{ij}=m_{ik}=m_{kj}=3$. In this case $W_1$ is the group generated by reflections in the sides of an equilateral triangle in the Euclidean plane, and the Davis complex $\Sigma(W_1)$ can be identified with tessellation of the plane by triangles. Then the walls $H_{s_js_ks_j}$ and $H_{s_i}$ are parallel lines, hence they do not intersect, and the group generated by $s_i$ and $s_js_ks_j$ is infinite. 

\smallskip
\noindent
Case 2: In all the other cases, $W_1$ { can be understood as a} group generated by reflections in the sides of a hyperbolic triangle. 

Assume the walls $H_{s_js_ks_j}$ and $H_{s_i}$ have a common vertex { (see Fig.~\ref{i-jkj})}. Then, together with $H_{s_j}$ (or $H_{s_k}$), they form a triangle tessellated by chambers (note that the triangle contains more than one fundamental triangle). According to the results of~\cite{F}, there are no triangles tessellated by  hyperbolic  odd-angled Coxeter triangles. The contradiction completes the proof.  
\end{proof}

\begin{figure}[!h]
\begin{center}
\psfrag{i}{\small $H_{s_i}$} 
\psfrag{j}{\small $H_{s_j}$} 
\psfrag{k}{\small $H_{s_k}$} 
\psfrag{jkj}{\small $H_{s_js_ks_j}$} 
\psfrag{3}{\scriptsize} 
\psfrag{5}{\scriptsize} 
\epsfig{file=./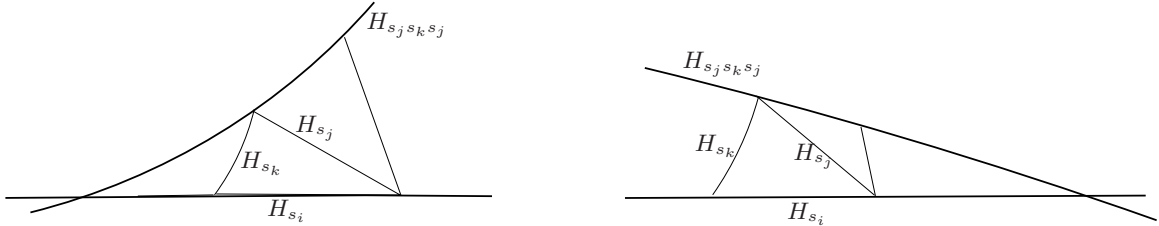,width=0.95\linewidth}
\caption{Towards the proof of Lemma~\ref{2copies}, Case~2}  
\label{i-jkj}
\end{center}
\end{figure}

\begin{lemma}
\label{2copies-long} Suppose $s_i,s_j, s_k$ are distinct elements of $S$ such that 
$m_{ij}, m_{ik}, m_{kj}$ are odd. 
Then  $s_js_ks_j$ and $s_ks_is_ks_is_k$ generate an infinite dihedral group.

\end{lemma}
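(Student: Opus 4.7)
The plan is to adapt the strategy of Lemma~\ref{2copies}. Set $t_1 := s_j s_k s_j$ and $t_2 := s_k s_i s_k s_i s_k$; both are reflections, being conjugates of $s_k$ (by $s_j$ and by $s_k s_i$ respectively, since $(s_k s_i) s_k (s_k s_i)^{-1} = s_k s_i s_k s_i s_k$). By the criterion on walls from Section~\ref{walls-section}, the assertion ``$\langle t_1, t_2\rangle$ is infinite dihedral'' is equivalent to $H_{t_1} \cap H_{t_2} = \emptyset$. I will locate these walls inside the Davis complex of the rank-$3$ special subgroup $W_1 = \langle s_i, s_j, s_k\rangle$, whose fundamental chamber is a triangle with vertices $A_{ij}, A_{ik}, A_{jk}$, where $A_{\alpha\beta} = H_{s_\alpha} \cap H_{s_\beta}$. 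Since $H_{t_1} = s_j\cdot H_{s_k}$ and $A_{jk}$ is fixed by $s_j$, the wall $H_{t_1}$ passes through $A_{jk}$; likewise $H_{t_2} = (s_k s_i)\cdot H_{s_k}$ passes through $A_{ik}$ (because $s_i$ and $s_k$ both fix $A_{ik}$).

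Under our hypotheses every $m_{\alpha\beta}$ is odd and finite, so either $(m_{ij}, m_{ik}, m_{kj}) = (3,3,3)$ and $W_1$ acts on $\E^2$, or at least one label exceeds $3$ and $W_1$ acts on $\H^2$ as a hyperbolic triangle group. In the hyperbolic case I would argue by contradiction: if $H_{t_1}$ and $H_{t_2}$ meet at a point $p$, then together with $H_{s_k}$ they bound a geodesic triangle with vertices $A_{jk}, A_{ik}, p$, tiled by copies of the fundamental odd-angled Coxeter triangle of $W_1$; this contradicts the result of~\cite{F} used in the proof of Lemma~\ref{2copies}. Before concluding one has to check non-degeneracy: $A_{ik} \ne A_{jk}$ is clear, and $p \ne A_{ik}, A_{jk}$ follows because otherwise $H_{t_1}$ or $H_{t_2}$ would share two distinct points with $H_{s_k}$ and hence coincide with it, whereas conjugating $H_{s_k}$ by $s_j$ (respectively by $s_k s_i$) does not preserve $H_{s_k}$.

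The Euclidean $(3,3,3)$ case requires a separate direct argument, since small equilateral triangles do tessellate larger ones in $\E^2$. Placing $H_{s_i}, H_{s_j}, H_{s_k}$ in directions $0, \pi/3, 2\pi/3$ and using that a reflection across a line of direction $\phi$ sends a line of direction $\theta$ to one of direction $2\phi - \theta$, a short iterated computation shows that $H_{t_1}$ and $H_{t_2}$ both lie in direction $0$. They are distinct because they pass through the distinct points $A_{jk}$ and $A_{ik}$, both of which lie on $H_{s_k}$ (direction $2\pi/3$), so the line through them is not horizontal. Hence $H_{t_1}$ and $H_{t_2}$ are two distinct parallel lines, and $t_1 t_2$ is a nontrivial Euclidean translation, of infinite order. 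The main obstacle is the non-degeneracy verification in the hyperbolic case; once that is in place, the argument is a direct analogue of Lemma~\ref{2copies}, with $H_{s_k}$ playing the role of the third wall of the forbidden triangle.
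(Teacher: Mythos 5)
Your proposal is correct and takes essentially the same route as the paper: the wall-intersection criterion together with the result of~\cite{F} on tessellations of hyperbolic triangles by odd-angled Coxeter triangles, with the Euclidean $(3,3,3)$ case settled by a parallel-mirrors computation. The only organizational difference is that the paper first disposes of the case $m_{ik}=3$ by noting $s_ks_is_ks_is_k=s_i$ and invoking Lemma~\ref{2copies} (whose proof already contains the Euclidean computation), whereas you treat $(3,3,3)$ directly and all hyperbolic triples uniformly; when applying~\cite{F} you should, as in the proof of Lemma~\ref{2copies}, also note that your triangle contains more than one chamber, which is immediate since a single chamber with side $[A_{ik},A_{jk}]$ would have to be $D(1)$ or $D(s_k)$, forcing $s_js_ks_j=s_j$ or $s_ks_is_ks_is_k=s_ks_is_k$, both impossible.
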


\begin{proof}
First, suppose that $m_{ik}=3$. Then $s_ks_is_ks_is_k=s_i$ and the statement follows from Lemma~\ref{2copies}.

Now, suppose $m_{ik}\ne 3$. If we assume that the mirrors of the corresponding reflections intersect each other, then, as in the proof of Lemma~\ref{2copies}, we can construct a hyperbolic triangle tessellated by smaller Coxeter triangles, which leads to a contradiction due to results of~\cite{F}.  
\end{proof}

\begin{lemma} \label{geodesic-gallery-lemma-2}
Let $(W, S)$ be a Coxeter system and $P=\bigcap\limits_{i=1}^{n} H^+_{w_i}$ a polytope in $\Sigma(W,S)$. If $v_1$ and  $v_k$ are elements of $W$ with $D(v_1), D(v_k) \subset P$, and $(D(v_1), D(v_2), \hdots, D(v_k))$ is a geodesic gallery, then $D(v_i) \subset P$ for all $1 \leq i \leq k$.
\end{lemma}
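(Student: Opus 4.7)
The plan is a short proof by contradiction that leverages only two inputs: the description of $P$ as an intersection of halfspaces, and Lemma~\ref{geodesic-gallery-lemma} which says a geodesic gallery crosses any wall at most once.

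First I would suppose for contradiction that some intermediate chamber $D(v_i)$, with $1<i<k$, is not contained in $P$. Since $P=\bigcap_{j=1}^{n}H_{w_j}^+$, failure of inclusion means that there is some defining halfspace $H_{w_j}^+$ with $D(v_i)\not\subset H_{w_j}^+$. Because $D(v_i)$ is a chamber (in particular has nonempty interior) and $H_{w_j}$ is a wall, this forces $D(v_i)\subset H_{w_j}^-$ (with the interior strictly on that side of $H_{w_j}$).

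Next I would extract the contradiction from the fact that the endpoints lie in $P$. By hypothesis $D(v_1),D(v_k)\subset P\subset H_{w_j}^+$, while $D(v_i)\subset H_{w_j}^-$. Passing from $D(v_1)$ to $D(v_i)$ along the gallery therefore switches from $H_{w_j}^+$ to $H_{w_j}^-$, so at least one consecutive pair of chambers $D(v_m),D(v_{m+1})$ with $m<i$ lies on opposite sides of $H_{w_j}$; the gallery crosses $H_{w_j}$ at that step. The same argument applied to the segment from $D(v_i)$ to $D(v_k)$ produces a second crossing of $H_{w_j}$. Hence the gallery $(D(v_1),\dots,D(v_k))$ crosses the wall $H_{w_j}$ at least twice, contradicting Lemma~\ref{geodesic-gallery-lemma}.

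The only subtlety is the step identifying ``$D(v_i)\not\subset H_{w_j}^+$'' with ``$D(v_i)\subset H_{w_j}^-$''; this is really the observation that a single chamber of $\Sigma(W,S)$ is never cut by any wall (the walls form the cell structure), so a chamber lies entirely in one of the two closed halfspaces determined by any given wall. Once that is noted, the rest is the clean ``endpoints on the same side, middle on the other side forces two crossings'' argument, and there is no genuine obstacle.
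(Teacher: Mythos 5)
Your proposal is correct and follows the paper's own argument essentially verbatim: assume an intermediate chamber leaves $P$, locate a defining wall $H_{w_j}$ it fails, and conclude the gallery crosses $H_{w_j}$ twice, contradicting Lemma~\ref{geodesic-gallery-lemma}. The extra remark that a chamber is never cut by a wall is a fine (and welcome) explication of a step the paper leaves implicit.
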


\begin{proof} Suppose that $1 < i < k$, $D(v_i) \not \subset P$. Then there exists $1 \leq j \leq n$ such that $D(v_i)\not\subset H_{w_j}^+$. Hence $(D(v_1), \hdots, D(v_i))$ and $(D(v_i), \hdots, D(v_k))$ cross the wall $H_{w_j}$, and by Lemma~\ref{geodesic-gallery-lemma} the gallery $(D(v_1), D(v_2), \hdots, D(v_k))$ is not geodesic, which contradicts the assumptions.
 \end{proof}

\noindent
{Note also that in view of Lemma~\ref{geodesic-gallery-lemma}, Lemma~\ref{geodesic-gallery-lemma-2} follows immediately from~\cite[Proposition~2.6]{R}. 
}

The main technical tool will consist of the following two lemmas.

\begin{lemma}
\label{2ways}
Let $D(w_0)$ be a chamber of $\Sigma(W,S)$. Suppose that there are two geodesic galleries $(D(w_0)$, $D(w_1), \dots, D(w_k))$ and $(D(w_0),D(w'_1),\dots,D(w'_{k'}))$, such that $l(w_i)>l(w_j)$ and $l(w'_i)>l(w'_j)$ for $i<j$. Let $H$ be a wall intersecting the first gallery but not the second, and $H'$ be a wall intersecting the second gallery but not the first one. Then $H\cap H'\ne\emptyset$.  
\end{lemma}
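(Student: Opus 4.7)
The plan is to extend both galleries all the way down to the fundamental chamber $D(1)$ and then exploit the fact that a geodesic gallery between two chambers crosses precisely the walls separating them. This will pin $D(w_0)$, $D(w_k)$, $D(w'_{k'})$ and $D(1)$ into four distinct ``quadrants'' cut out by $H$ and $H'$, which in turn will force $H\cap H'\ne\emptyset$.

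First I would prolong gallery~$1$ by concatenating it with a geodesic gallery from $D(w_k)$ to $D(1)$ read off from any reduced expression for $w_k$. Since the lengths $l(w_i)$ drop by exactly one at every step of the given gallery, $l(w_k)=l(w_0)-k$, so the concatenation has total length $k+l(w_k)=l(w_0)=l(w_0^{-1})$. By the length criterion recalled in Section~\ref{walls-section}, the concatenation is itself a geodesic gallery from $D(w_0)$ to $D(1)$. By Lemma~\ref{geodesic-gallery-lemma} it crosses every wall at most once, and since $H$ is already crossed inside gallery~$1$, $H$ must separate $D(w_0)$ from $D(1)$. Applying the same construction to gallery~$2$ shows that $H'$ separates $D(w_0)$ from $D(1)$.

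Choosing signs so that $D(w_0)\in H^+\cap H'^+$, I can then locate the other three chambers. Gallery~$1$ crosses $H$ but misses $H'$, so by Lemma~\ref{geodesic-gallery-lemma} we have $D(w_k)\in H^-\cap H'^+$; symmetrically $D(w'_{k'})\in H^+\cap H'^-$. The previous paragraph yields $D(1)\in H^-\cap H'^-$. Hence all four quadrants $H^{\pm}\cap H'^{\pm}$ are nonempty.

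The remaining ingredient is the general fact that two disjoint walls of $\Sigma$ must leave one of the four quadrants empty: since walls are totally geodesic and hence connected, either $H\subset H'^+$ or $H\subset H'^-$, and then the opposite halfspace $H'^{\mp}$, being a connected subset of $\Sigma\setminus H$, lies entirely in $H^+$ or in $H^-$, emptying the corresponding quadrant. Combined with the nonemptiness established above, this contradicts $H\cap H'=\emptyset$. I do not expect any serious obstacle; the only point that needs a bit of care is the sign bookkeeping in the third paragraph together with the verification that the concatenated galleries are geodesic, both of which follow immediately from the length identity $l(w_k)=l(w_0)-k$ and Lemma~\ref{geodesic-gallery-lemma}.
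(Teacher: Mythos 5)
Your proof is correct and follows essentially the same route as the paper: extend both galleries to geodesic galleries ending at $D(1)$, invoke Lemma~\ref{geodesic-gallery-lemma} to locate $D(w_0)$, $D(w_k)$, $D(w'_{k'})$ and $D(1)$ in the four quadrants cut out by $H$ and $H'$, and conclude that the walls must meet. Your final paragraph (using connectedness of walls and convexity of halfspaces to show disjoint walls leave a quadrant empty) actually spells out the separation argument in more detail than the paper, which only records the nonemptiness of the two diagonal quadrants before asserting the conclusion.
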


\begin{proof}
We can extend the geodesic galleries $(D(w_0)$, $D(w_1), \dots, D(w_k))$ and $(D(w_0),D(w'_1),\dots,D(w'_{k'}))$ to geodesic galleries $\Gamma=(D(w_0)$, $D(w_1), \dots, D(w_k), D(w_{k+1}), \hdots, D(1))$ and $\Gamma'=(D(w_0),D(w'_1),\dots,$ $D(w'_{k'}),D(w'_{k+1}), \hdots, D(1))$. Denote by $H^+$ and $H'^+$ the halfspaces defined by $H$ and $H'$ that contain $D(w_0)$. As $\Gamma$ crosses the wall $H$ and $\Gamma'$ crosses $H'$, the identity element $D(1)$ has to lie in the complementary halfspaces, i.e. in $H^- \cap H'^-$, see Lemma \ref{geodesic-gallery-lemma}. This requires in particular that $H^- \cap H'^- \neq \emptyset \neq H^+ \cap H'^+$, which implies that the intersection between $H$ and $H'$ is nonempty.
\end{proof}

\begin{cor}
\label{inf neighb}
Let $w\in W$, $s_i,s_j\in S$. If $l(w)=l(ws_i)+1=l(ws_j)+1$, then $m_{ij}$ is finite. 

\end{cor}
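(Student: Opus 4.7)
The plan is to derive the corollary from Lemma~\ref{2ways} by applying it to the two shortest possible galleries emanating from $D(w)$. Specifically, I would take $w_0 = w$ and consider the one-step galleries $(D(w), D(ws_i))$ and $(D(w), D(ws_j))$. By hypothesis $l(ws_i) = l(ws_j) = l(w) - 1$, so each gallery has strictly decreasing length and hence fulfils the monotonicity requirement of Lemma~\ref{2ways}. Each gallery is visibly geodesic since $l(w^{-1}(ws_i)) + 1 = 2$ equals its length (and likewise for $s_j$).

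Next I would identify the wall crossed by each gallery. The wall separating $D(w)$ from $D(ws_i)$ is $H_{ws_iw^{-1}}$, and similarly the wall separating $D(w)$ from $D(ws_j)$ is $H_{ws_jw^{-1}}$. The degenerate case $s_i = s_j$ is vacuous (then $m_{ij}=1<\infty$); otherwise these two walls are distinct, and each is crossed by its own gallery but not by the other simply because each gallery crosses only one wall in total. Lemma~\ref{2ways} then yields
$$H_{ws_iw^{-1}} \cap H_{ws_jw^{-1}} \ne \emptyset.$$

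Applying the isometry $w^{-1}$, which sends a wall $H_t$ to $H_{w^{-1}tw}$, transports this intersection to $H_{s_i} \cap H_{s_j} \ne \emptyset$. Invoking the characterisation of intersecting walls recalled in Section~\ref{walls-section}, namely that $H_u$ and $H_v$ intersect if and only if $\langle u, v \rangle$ is finite, we conclude that $\langle s_i, s_j \rangle$ is finite, i.e., $m_{ij} < \infty$.

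I do not anticipate a substantive obstacle: the corollary is essentially the most basic application of Lemma~\ref{2ways}. The only minor points of care are verifying that the two walls are distinct (which follows from $s_i \ne s_j$ by a straightforward argument, or is handled trivially otherwise) and confirming that each one-step gallery crosses exactly the expected wall, both of which are immediate from the definitions.
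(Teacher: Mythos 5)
Your argument is correct and is essentially identical to the paper's proof: the paper likewise applies Lemma~\ref{2ways} to the one-step geodesic galleries $(D(w),D(ws_i))$ and $(D(w),D(ws_j))$ to conclude $H_{ws_iw^{-1}}\cap H_{ws_jw^{-1}}\neq\emptyset$ and hence $m_{ij}<\infty$. Your extra remarks (distinctness of the walls, conjugating by $w^{-1}$, the intersection criterion from Section~\ref{walls-section}) just spell out details the paper leaves implicit.
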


\begin{proof}
Applying the previous lemma to the geodesic galleries $(D(w), D(ws_i))$ and $(D(w), D(ws_j))$ shows that $H_{ws_iw^{-1}} \cap H_{ws_jw^{-1}} \neq \emptyset$, and the order $m_{ij}$ of $s_is_j$ is finite.
\end{proof}

\begin{lemma}
\label{lemma7} Suppose that $\Sigma(W,S)$ has dimension two.
	Let $s_i, s_j \in S$, $s_i \neq s_j$, and $w \in I$ such that $l(w)=l(ws_i)+1=l(ws_j)+1$. Then for every integer $k$ the elements $w(s_is_j)^k$ and $w(s_is_j)^ks_i$ are in $I$. 
	
	Moreover, $l(w(s_is_j)^k)=l(w(s_js_i)^k)=l(w)-2k$ for $k\leq m_{ij}/2$, and 
$l(w(s_is_j)^ks_i)=l(w(s_js_i)^ks_j)=l(w)-2k-1$ for $k\leq (m_{ij}-1)/2$.

\end{lemma}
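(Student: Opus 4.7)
The plan is to establish the length formulas first and then use them together with Lemma~\ref{geodesic-gallery-lemma-2} to deduce the membership claim.

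For the length formulas, I would apply Corollary~\ref{ending-element-corollary} to the hypotheses $l(ws_i)=l(ws_j)=l(w)-1$ to deduce that both $s_i$ and $s_j$ lie in $In(w)$. Lemma~\ref{end-of-reduced-expression} then shows $\langle In(w)\rangle$ is a finite standard parabolic subgroup, and since $\dim\Sigma(W,S)=2$ no such subgroup has rank exceeding $2$, so $In(w)=\{s_i,s_j\}$ exactly, and in particular $m_{ij}<\infty$. Setting $J=\{s_i,s_j\}$, I would invoke the standard coset decomposition $w=w^Jw_J$ with $w_J\in W_J$, $l(w)=l(w^J)+l(w_J)$, and $l(w^Ju)=l(w^J)+l(u)$ for every $u\in W_J$. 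The identity $In(w_J)=In(w)\cap J=J$ forces $w_J$ to be the longest element $w_0$ of the dihedral group $W_J$, so $l(w)=l(w^J)+m_{ij}$. Combining this with the classical identity $l(w_0u)=m_{ij}-l(u)$ inside the finite Coxeter group $W_J$ yields
$$l(w(s_is_j)^k)=l(w^J)+l(w_0(s_is_j)^k)=l(w^J)+m_{ij}-2k=l(w)-2k$$
for $0\le k\le m_{ij}/2$, and analogously $l(w(s_is_j)^ks_i)=l(w)-2k-1$ for $0\le k\le (m_{ij}-1)/2$; swapping $s_i$ and $s_j$ produces the companion formulas.

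For the membership claim, the periodicity $(s_is_j)^{m_{ij}}=1$ together with the identity $(s_is_j)^{m_{ij}-k}=(s_js_i)^k$ implies that every element of the coset $wW_J$ equals either $w(s_is_j)^{k'}$ or $w(s_js_i)^{k'}$ for some $0\le k'\le m_{ij}/2$, so it suffices to treat this range. For such $k'$ the gallery
$$\Gamma=(D(w),\,D(ws_i),\,D(ws_is_j),\,\dots,\,D(w(s_is_j)^{k'}))$$
has length $2k'+1$, which by the length formulas equals $l(w^{-1}w(s_is_j)^{k'})+1$, so $\Gamma$ is geodesic. Concatenating $\Gamma$ with any geodesic gallery from $D(w(s_is_j)^{k'})$ to $D(1)$ produces a gallery of total length $l(w)+1$, which is therefore also geodesic. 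Its endpoints $D(w)$ and $D(1)$ both lie in the convex Coxeter polytope $P$, so Lemma~\ref{geodesic-gallery-lemma-2} places every intermediate chamber in $P$; in particular $D(w(s_is_j)^j)\subset P$ and $D(w(s_is_j)^js_i)\subset P$ for every $0\le j\le k'$. The symmetric argument with $s_i$ and $s_j$ interchanged handles the remaining elements of $wW_J$.

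The main obstacle is the initial coset-decomposition step: the two-dimensionality of $\Sigma(W,S)$ is essential to force $In(w)=\{s_i,s_j\}$ exactly, so that $w_J=w_0$ and the length identities combine without further bookkeeping. Without this hypothesis $In(w)$ could properly contain $\{s_i,s_j\}$ and generate a larger finite parabolic subgroup, making the formulas fail in general. Once the length formulas are in place, the remaining convexity argument is a direct application of Lemma~\ref{geodesic-gallery-lemma-2}.
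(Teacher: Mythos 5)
Your proof is correct, and it reaches the key structural fact by a different route than the paper. Both arguments share the same skeleton: the dimension-two hypothesis plus Lemma~\ref{end-of-reduced-expression} and Corollary~\ref{ending-element-corollary} force the descent set of $w$ to be exactly $\{s_i,s_j\}$ with $m_{ij}<\infty$, and membership in $I$ is then obtained from convexity of $P$ via Lemma~\ref{geodesic-gallery-lemma-2}. Where you diverge is in how you show that $w$ ``ends in'' the longest element $w_0$ of the dihedral parabolic $W_J$: the paper extracts this combinatorially from Tits' theorem (Theorem~\ref{M-operation}), by passing between a reduced expression ending in $s_i$ and one ending in $s_j$ through $M$-operations of the second type and observing that the decisive operation can only be applied to a suffix $(s_js_i)^{m_{ij}/2}$ or $(s_is_j)^{(m_{ij}-1)/2}s_i$; you instead invoke the classical parabolic coset decomposition $w=w^Jw_J$ with $l(w)=l(w^J)+l(w_J)$ and $l(w^Ju)=l(w^J)+l(u)$, identify $w_J=w_0$ from the two descents, and read off the length formulas from $l(w_0u)=m_{ij}-l(u)$. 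Your version imports a standard external fact (Bourbaki/Humphreys) that the paper does not state, but in exchange the length computation is cleaner and the geodesic galleries can be written down explicitly; the paper's version stays entirely inside its own toolkit at the cost of a more delicate word-combinatorics step.

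One small imprecision: the coset $wW_J$ also contains the elements $w(s_is_j)^ks_i$, which are not of the form $w(s_is_j)^{k'}$ or $w(s_js_i)^{k'}$, and your gallery $\Gamma$ ending at $D(w(s_is_j)^{k'})$ only passes through $D(w(s_is_j)^js_i)$ for $j\le k'-1$, so the ``in particular'' claim for $j=k'$ (and, for odd $m_{ij}$, the minimal coset element $ww_0=w(s_is_j)^{(m_{ij}-1)/2}s_i$) is not literally covered as written. The fix is immediate within your own argument: for $2k'+1\le m_{ij}$ take the gallery one chamber further, ending at $D(w(s_is_j)^{k'}s_i)$; by your length formulas this longer gallery is still geodesic, and the same concatenation with a geodesic gallery to $D(1)$ applies.
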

\begin{proof}

We note first that $m_{ij}<\infty$ by Corollary~\ref{inf neighb}.

Denote $l(w)$ by $m$. By Corollary~\ref{ending-element-corollary}, there exists a reduced expression $E_1$ for $w$ that ends in $s_i$ and another reduced expression $E_2$ that ends in $s_j$. Theorem~\ref{M-operation} implies that $E_1$ can be transformed into $E_2$ by repeatedly applying $M$-operations of second type (given in Theorem \ref{M-operation}). Since $\Sigma$ has dimension two, every subgroup generated by at least three different elements of $S$ is infinite. Hence, using Lemma~\ref{end-of-reduced-expression}, we see that this sequence of reduced expressions starting with $E_1$, ending with $E_2$ and obtained by repeatedly applying $M$-operations of the second type contains only expressions ending in $s_i$ or $s_j$. As $E_1$ ends in $s_i$ and $E_2$ ends in $s_j$, there exists a reduced expression $\wt E= \widetilde s_1 \widetilde s_2 \cdots \widetilde s_m$ for $w$ in the above sequence that ends in $s_i$ and that gets transformed into an expression ending in $s_j$ by applying $M$-operation of second type once. This is only possible if $\widetilde E$ ends in $(s_js_i)^{m_{ij}/2}$ for even $m_{ij}$ or $(s_is_j)^{(m_{ij}-1)/2}s_i$ if $m_{ij}$ is odd.

Now consider $\widetilde E$ as a geodesic gallery joining $D(1)$ with $D(w)$. In view of Lemma~\ref{geodesic-gallery-lemma-2}, we see that $w(s_is_j)^k\in I$ and $w(s_is_j)^ks_i\in I$ for all $k\in \Z$.  Since the length of two neighbors differs exactly by $1$, the elements $w$, $w(s_is_j)^k$ have length $m-2k$ for $k\leq m_{ij}/2$, and $w(s_is_j)^ks_i$ has length $m-2k-1$ for $k\leq (m_{ij}-1)/2$.
\end{proof}

\begin{remark}
\label{less}
Similar to the last paragraph of the proof above, we make the following observation, { which} we will { use} throughout the paper. If $w$ is a reduced expression, and $D(w)\in P$ (i.e., $w\in I$), then for any $s_i\in S$ such that $l(ws_i)=l(w)-1$ the chamber $D(ws_i)$ is contained in $P$.  

\end{remark}

\begin{remark}
\label{highdim}
{Lemma~\ref{lemma7} still holds if we drop the assumption on $\Sigma(W,S)$ to have dimension two. This can be proved using~\cite[Theorem~2.9 and Lemma~2.10]{R}.}

\end{remark}

\pagebreak

\begin{lemma}
\label{inf}
Suppose that $w,ws_i\in I$ and $ws_k,ws_is_k\notin I$. Then $m_{ik}=\infty$.

\end{lemma}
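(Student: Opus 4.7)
The plan is to argue by contradiction: I assume $m_{ik} < \infty$ and derive a violation of the Coxeter-polytope structure of $P$. Since $W$ is odd-angled, a finite $m_{ik}$ must be odd and hence $\geq 3$; in particular $s_is_ks_i \neq s_k$, so the walls $H_{s_k}$ and $H_{s_is_ks_i}$ are distinct.

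My first step will be to identify two defining walls of $P$. The chambers $D(w)$ and $D(ws_k)$ are neighbors across the wall $H_1 := H_{ws_kw^{-1}}$; since $w\in I$ but $ws_k\notin I$, this wall must appear in the minimal representation $P = \bigcap H_{w_j}^+$ and hence is a defining wall of $P$. An identical argument applied to the neighbors $D(ws_i)$ and $D(ws_is_k)$ shows that $H_2 := H_{ws_is_ks_iw^{-1}}$ is also a defining wall of $P$.

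My second step will be to compute the dihedral angle between $H_1$ and $H_2$ inside $P$. With $m_{ik}<\infty$, the walls $H_{s_k}$ and $H_{s_is_ks_i}$ meet along the fixed locus of the finite dihedral subgroup $\langle s_i,s_k\rangle$, and translating by $w$ shows $H_1\cap H_2\neq\emptyset$. Around this intersection the $2m_{ik}$ chambers of the orbit $\langle s_i,s_k\rangle\cdot D(w)$ tile a neighborhood, each contributing a local angle $\pi/m_{ik}$. By convexity of $P$, the chambers of this orbit that lie in $P$ form a single contiguous arc; the hypotheses put the immediate neighbors $D(ws_k)$ and $D(ws_is_k)$ outside $P$, so the arc consists of exactly $D(w)$ and $D(ws_i)$. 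Consequently the dihedral angle of $P$ at $H_1\cap H_2$ equals $2\pi/m_{ik}$.

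Finally, since $P$ is a Coxeter polytope, this angle must be a submultiple of $\pi$, i.e., $2\pi/m_{ik} = \pi/n$ for some integer $n\geq 2$. This forces $m_{ik}=2n$ to be even, contradicting odd-angledness, and so $m_{ik}=\infty$. The only point requiring real care is the claim that the local angle is precisely $2\pi/m_{ik}$ rather than some larger multiple: this relies on using convexity of $P$ to restrict the chambers of $P$ within the dihedral orbit to a contiguous arc, which is then pinned down to $\{D(w),D(ws_i)\}$ by the two hypotheses $ws_k, ws_is_k \notin I$.
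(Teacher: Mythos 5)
Your proof is correct and is essentially the paper's argument: the paper likewise observes that the walls $H_{ws_kw^{-1}}$ and $H_{ws_is_ks_iw^{-1}}$ bound a dihedral angle of $P$ equal to $2\pi/m_{ik}$, which cannot occur in a Coxeter polytope since $m_{ik}$ is odd. You merely spell out the details (that these walls carry facets of $P$ and that convexity pins the angle down to exactly two chambers) which the paper leaves to a figure.
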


\begin{proof}
The lemma follows immediately from the assumption that $W$ is odd-angled. Indeed, assume that $m_{ik}\ne\infty$. Then the walls $H_{ws_kw^{-1}}$ and $H_{ws_is_ks_iw^{-1}}$ form an angle equal to $2\pi/m_{ik}$, which cannot be an angle of $P$ as $P$ is a Coxeter polytope (see Fig.~\ref{inf-fig}).   
\end{proof}

\begin{figure}[!h]
\begin{center}
\psfrag{wsk}{\small $ws_k$}
\psfrag{w}{\small $w$}
\psfrag{wsi}{\small $ws_i$}
\psfrag{wsik}{\small $ws_is_k$}
\psfrag{1}{\scriptsize $H_{ws_kw^{-1}}$}
\psfrag{2}{\scriptsize $H_{ws_is_ks_iw^{-1}}$}
\epsfig{file=./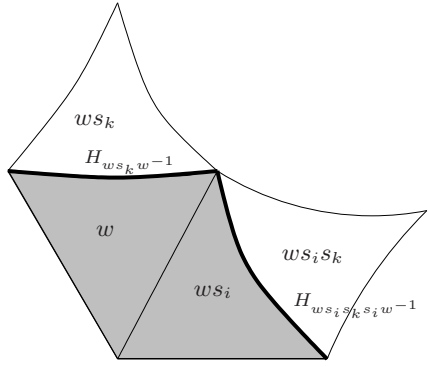,width=0.35\linewidth}
\caption{Towards the proof of Lemma~\ref{inf}}  
\label{inf-fig}
\end{center}
\end{figure}

\begin{lemma}
\label{lemma b}
The lengths of the elements $w,ws_i,ws_is_j,ws_is_js_k$ cannot be $L-1,L,L-1,L-2$ for any $L>2$ and distinct $i,j,k$.

\end{lemma}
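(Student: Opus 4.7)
The plan is to derive a contradiction by producing two geodesic galleries starting at $D(ws_i)$ (which has the locally maximal length $L$) and combining Lemma~\ref{2ways} with Lemma~\ref{2copies}(2).

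Suppose for contradiction that the lengths of $w,ws_i,ws_is_j,ws_is_js_k$ are $L-1,L,L-1,L-2$. Starting from $D(ws_i)$, consider the two galleries
$$\Gamma_1=\bigl(D(ws_i),\,D(w)\bigr),\qquad \Gamma_2=\bigl(D(ws_i),\,D(ws_is_j),\,D(ws_is_js_k)\bigr).$$
Both have strictly decreasing length along the gallery, so by Lemma~\ref{geodesic-gallery-lemma} they are geodesic (each edge crosses a distinct wall). The unique wall crossed by $\Gamma_1$ is $H_1:=H_{ws_iw^{-1}}$; the walls crossed by $\Gamma_2$ are $H_2:=H_{ws_is_js_iw^{-1}}$ and $H_3:=H_{ws_is_js_ks_js_iw^{-1}}$.

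Next, I would check that $H_1\notin\{H_2,H_3\}$: the equation $H_1=H_2$ reduces to $s_j=1$, and $H_1=H_3$ reduces to $s_js_ks_j=1$, both impossible since $i,j,k$ are distinct. Hence $H_1$ meets $\Gamma_1$ but not $\Gamma_2$, while $H_3$ meets $\Gamma_2$ but not $\Gamma_1$. Applying Lemma~\ref{2ways} to $(\Gamma_1,\Gamma_2)$ with the walls $H=H_1$ and $H'=H_3$ then yields $H_1\cap H_3\neq\emptyset$.

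Finally, since the $W$-action is by isometries, $H_1=w\cdot H_{s_i}$ and $H_3=w\cdot H_{s_is_js_ks_js_i}$, so that
$$H_1\cap H_3 \;=\; w\cdot\bigl(H_{s_i}\cap H_{s_is_js_ks_js_i}\bigr).$$
Because $W$ is odd-angled and $s_i,s_j,s_k$ are distinct, Lemma~\ref{2copies}(2) asserts exactly that $H_{s_i}\cap H_{s_is_js_ks_js_i}=\emptyset$, whence $H_1\cap H_3=\emptyset$, contradicting the previous paragraph.

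The creative step I expect to be the main obstacle is the choice of basepoint: one must recognise that $D(ws_i)$ is the unique chamber from which the hypothesis yields two decreasing geodesic rays, and that the far end $H_3$ of the longer ray is precisely the wall $w\cdot H_{s_is_js_ks_js_i}$ handled by Lemma~\ref{2copies}(2). Once the basepoint and walls are identified, the argument is bookkeeping plus two direct applications of lemmas proved earlier.
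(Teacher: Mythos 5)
Your proposal is correct and is essentially the paper's own proof: the same two decreasing geodesic galleries based at $D(ws_i)$, the same pair of walls, Lemma~\ref{2ways} to force them to intersect, and Lemma~\ref{2copies}(2) translated by $w$ to rule this out, with your extra bookkeeping (distinctness of the walls, the $w$-translation) only making explicit what the paper leaves implicit. One small algebraic slip: $H_1=H_2$ reduces to $s_i=s_j$ and $H_1=H_3$ to $s_i=s_js_ks_j$ (not to $s_j=1$ or $s_js_ks_j=1$), but these are just as impossible for distinct generators, so the argument is unaffected.
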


\begin{proof}
Suppose \ the \ contrary. Then \ the \ geodesic \ galleries \ $(D(ws_i)$, $D(w))$ \ and \ $(D(ws_i)$, $D(ws_is_j)$, $D(ws_is_js_k))$ together with walls $H_1$ separating $D(ws_i)$ and $D(w)$ and $H_2$ separating $D(ws_is_j)$ and $D(ws_is_js_k)$ satisfy the assumptions of  Lemma~\ref{2ways}. Therefore, these two walls must intersect. However, this contradicts Lemma~\ref{2copies}, see Fig.~\ref{f lemma ab}.a.
\end{proof}

\begin{figure}[!h]
\begin{center}
\psfrag{f}{\scriptsize $f$} 
\psfrag{fsk}{\scriptsize $fs_k$} 
\psfrag{fsi}{\scriptsize $fs_i$} 
\psfrag{fsij}{\scriptsize $fs_is_j$} 
\psfrag{fsijk}{\scriptsize $fs_is_js_k$} 
\psfrag{fsiji}{\scriptsize $fs_is_js_i$} 
\psfrag{w}{\scriptsize $w$} 
\psfrag{wsk}{\scriptsize $ws_k$} 
\psfrag{wsi}{\scriptsize $ws_i$} 
\psfrag{wsij}{\scriptsize $ws_is_j$} 
\psfrag{wsijk}{\scriptsize $ws_is_js_k$} 
\psfrag{wsiji}{\scriptsize $ws_is_js_i$} 
\psfrag{L}{\small $L$} 
\psfrag{L-1}{\small $L-1$} 
\psfrag{L-2}{\small $L-2$} 
\psfrag{M}{\small $M$} 
\psfrag{M-1}{\small $M-1$} 
\psfrag{M-2}{\small $M-2$}
\psfrag{a}{\small a)}
\psfrag{b}{\small b)} 
\epsfig{file=./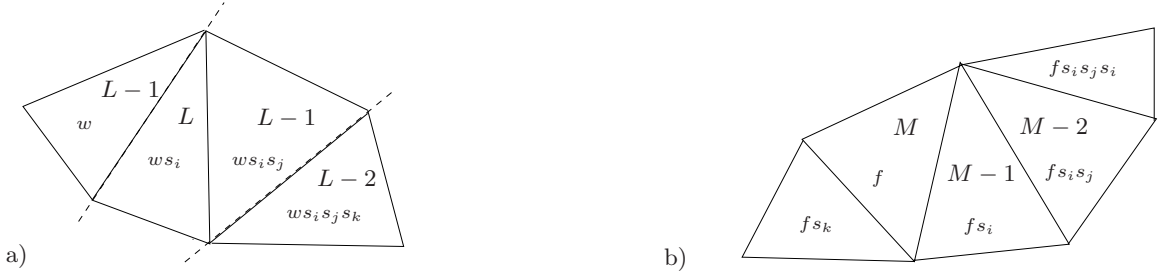,width=0.95\linewidth}
\caption{Towards the proofs of Lemmas~\ref{lemma b} and~\ref{lemma a}}  
\label{f lemma ab}
\end{center}
\end{figure}

The following statement, which we will use throughout the section, follows immediately from the definition of a Coxeter polytope.
\begin{lemma}
\label{divide}
Let $w\in W$, and let $Q=H_{w_1}\cap H_{w_2}\cap P$ be a codimension $2$ face of $P$, where $w_l=w(s_is_j)^{k_l}s_iw^{-1}$, {$l=1,2$ and $k_1,k_2<m_{ij}$ are some non-negative integers. Then the number of copies of the fundamental chamber $D(1)$ in $P$ containing $Q$ divides $m_{ij}$.}  
\end{lemma}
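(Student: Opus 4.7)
The plan is to compute the product $w_1w_2$ explicitly inside the dihedral subgroup $w\langle s_i,s_j\rangle w^{-1}$, read off the order of this product, and then use the fact that the dihedral angle at a codimension $2$ face of a Coxeter polytope is $\pi$ divided by that order. Throughout, $Q$ being a genuine codimension $2$ face forces $H_{w_1}\neq H_{w_2}$, hence $k_1\neq k_2$.

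\textbf{Step 1 (algebraic identity).} Set $r=s_is_j$. Because $s_irs_i=s_i(s_is_j)s_i=s_js_i=r^{-1}$, conjugation by $s_i$ inverts every power of $r$, so $s_ir^{k_2}s_i=r^{-k_2}$. Consequently
\[
w_1w_2 \;=\; w\,r^{k_1}s_i\,r^{k_2}s_i\,w^{-1} \;=\; w\,r^{k_1}\!\cdot r^{-k_2}\!\cdot w^{-1} \;=\; w\,r^{\,k_1-k_2}\,w^{-1}.
\]
Since $r$ has order $m_{ij}$, the order $n$ of $w_1w_2$ equals the order of $r^{k_1-k_2}$, namely $n=m_{ij}/\gcd(m_{ij},\,k_1-k_2)$. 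In particular $n$ divides $m_{ij}$, and $n\geq 2$ because $k_1\not\equiv k_2\pmod{m_{ij}}$.

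\textbf{Step 2 (dihedral angle at $Q$).} Since $H_{w_1}\cap H_{w_2}\neq\emptyset$, the observation from Section~\ref{walls-section} (two walls intersect iff the dihedral group they generate is finite) applies: $\langle w_1,w_2\rangle$ is a finite dihedral group of order $2n$. The dihedral angle formed by $H_{w_1}$ and $H_{w_2}$ in $\Sigma$, which is the dihedral angle of $P$ at $Q$, is therefore $\pi/n$; this is also consistent with $P$ being a Coxeter polytope (Step~1 showed $n\mid m_{ij}$, hence $n$ is indeed an integer).

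\textbf{Step 3 (counting chambers around $Q$).} Inside a single chamber $D(v)$ that contains $Q$, the angle at $Q$ is $\pi/m_{ij}$: the face of $D(v)$ on $Q$ belongs to the rank-$2$ dihedral picture coming from the coset of $w\langle s_i,s_j\rangle w^{-1}$ containing $v$, and the fundamental chamber of a dihedral group of order $2m_{ij}$ has angle $\pi/m_{ij}$. The copies of $D(1)$ in $P$ containing $Q$ tile the dihedral angle at $Q$, so their number equals
\[
\frac{\pi/n}{\pi/m_{ij}} \;=\; \frac{m_{ij}}{n},
\]
which divides $m_{ij}$, as claimed.

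The only non-formal point is Step~2: justifying that the dihedral angle of $P$ at $Q$ really equals $\pi$ over the order of $w_1w_2$. This is the standard correspondence in the Davis/Moussong metric between intersecting walls and finite dihedral subgroups, already invoked implicitly in Section~\ref{walls-section}; everything else is a direct computation.
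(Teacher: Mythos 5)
Your overall strategy is the same as the paper's: the paper gives no detailed argument and simply observes that the lemma ``follows immediately from the definition of a Coxeter polytope'', i.e.\ from the fact that the dihedral angle of $P$ at $Q$ is an integer submultiple of $\pi$ while each chamber contributes an angle of $\pi/m_{ij}$ at $Q$. Your Steps 1 and 3 are correct computations in this spirit.

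The one place where your write-up is genuinely under-justified is exactly the point you flag, Step 2, and the issue is not just ``non-formal''. Knowing only that $\langle w_1,w_2\rangle$ is dihedral of order $2n$ tells you that the angle between the two walls at $Q$ is $k\pi/n$ for some $k$ with $\gcd(k,n)=1$, not that it is $\pi/n$; a priori the dihedral angle of $P$ at $Q$ could be, say, $2\pi/5$ with $n=5$, and then the chamber count would be $2$, which does not divide $5$. What rules this out is precisely the hypothesis that $P$ is a Coxeter polytope (stated in the standing notation of Section~\ref{tools}): the angle of $P$ at $Q$ is $\pi/m'$ for an integer $m'$, and then $k\pi/n=\pi/m'$ together with $\gcd(k,n)=1$ forces $k=1$, $m'=n$. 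So the Coxeter-polytope property must be used as an ingredient here, not merely checked for consistency as your parenthetical suggests. Once you do invoke it, note that the detour through the order of $w_1w_2$ becomes unnecessary: if $t$ chambers of $P$ contain $Q$, then $t\cdot\pi/m_{ij}=\pi/m'$ directly gives $t\,m'=m_{ij}$, hence $t\mid m_{ij}$, which is the paper's intended one-line argument. With the role of the Coxeter-polytope condition made explicit, your proof is correct.
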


\pagebreak

We now consider an element $f$ of $I$ of maximal length. We will denote its length by $M$.

\begin{lemma}
\label{5angle}
Let $f\in I$ be an element of maximal length $M$. Assume that $fs_i\in I$ and  $m_{ij}\ne \infty$. Then $fs_is_j\in I$.
Moreover, if $3\,\not{|}\,m_{ij}$ then  $l(fs_is_j)=l(fs_i)-1=M-2$, and  if $3\,\not{|}\,m_{ij}$, $5\,\not{|}\,m_{ij}$ then $l(fs_is_js_i)=M-3$. 

\end{lemma}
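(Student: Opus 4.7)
Since $f\in I$ has maximal length $M$ and $fs_i\in I$, we necessarily have $l(fs_i)=M-1$ (otherwise $fs_i$ would violate maximality). The plan is to exploit Lemma~\ref{divide}: the codimension-$2$ cell $v$ of the Davis complex corresponding to the coset $f\langle s_i,s_j\rangle$ is surrounded by $2m_{ij}$ chambers, and the chambers lying in $P$ form a contiguous arc whose length $c$ is either $2m_{ij}$ (if $v$ is interior to $P$) or a divisor of $m_{ij}$ (hence odd, since $W$ is odd-angled). The length function along this cycle will be constrained by repeated applications of Lemma~\ref{lemma7}, each triggered by finding a chamber $w\in I$ with $l(ws_i)=l(ws_j)=l(w)-1$; the conclusion of that lemma will then force every chamber around the corresponding vertex into $I$.

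For the assertion $fs_is_j\in I$, I would split on $l(fs_is_j)\in\{M-2,M\}$. If $l(fs_is_j)=M-2$, Remark~\ref{less} applied to $fs_i$ gives the claim. If $l(fs_is_j)=M$, I would first show $l(fs_j)=M+1$: otherwise $l(fs_j)=M-1$, and Lemma~\ref{lemma7} with $w=f$ would force $l(fs_is_j)=M-2$. Hence $fs_j\notin I$, so the arc around $v$ contains $D(f),D(fs_i)$ but not $D(fs_j)$. Since $c$ is odd and $\ge 2$, it is $\ge 3$, and the arc must extend in the direction opposite $D(fs_j)$, producing $D(fs_is_j)\in P$.

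For the second assertion, assume $3\nmid m_{ij}$ and suppose for contradiction $l(fs_is_j)=M$. By the previous paragraph $fs_j\notin I$, and $l(fs_is_js_i)\in\{M-1,M+1\}$. If $l(fs_is_js_i)=M-1$, then Lemma~\ref{lemma7} with $w=fs_is_j$ (satisfying $l(ws_i)=l(ws_j)=M-1$) would put every chamber around $v$ into $I$, contradicting $fs_j\notin I$. Hence $l(fs_is_js_i)=M+1\notin I$, the arc equals $\{D(f),D(fs_i),D(fs_is_j)\}$, so $c=3$, and Lemma~\ref{divide} yields $3\mid m_{ij}$, a contradiction.

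For the last assertion, assume also $5\nmid m_{ij}$ and suppose $l(fs_is_js_i)=M-1$ (the alternative to $M-3$). An analogous argument first excludes $l(fs_j)=M-1$ (else Lemma~\ref{lemma7} with $w=f$ yields $l(fs_is_js_i)=M-3$ directly), so $fs_j\notin I$. If $c=3$, Lemma~\ref{divide} gives $3\mid m_{ij}$, contradiction; so $c\ge 5$ and in particular $D(fs_is_js_i),D(fs_is_js_is_j)\in P$. I would then apply Lemma~\ref{lemma7} twice more: if $l(fs_is_js_is_j)=M-2$, the choice $w=fs_is_js_i$ satisfies the hypothesis (with $l(ws_i)=l(fs_is_j)=M-2$ and $l(ws_j)=M-2$ by assumption), again forcing $v$ to be interior; so in fact $l(fs_is_js_is_j)=M$. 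Similarly, if $l(fs_is_js_is_js_i)=M-1$, the choice $w=fs_is_js_is_j$ satisfies the hypothesis (with $l(ws_i)=M-1$ by assumption and $l(ws_j)=l(fs_is_js_i)=M-1$), again forcing $v$ interior; so $l(fs_is_js_is_js_i)=M+1\notin I$. Therefore the arc cannot extend past $D(fs_is_js_is_j)$, forcing $c=5$, and Lemma~\ref{divide} yields the contradiction $5\mid m_{ij}$. The main obstacle is the nested case analysis: one must repeatedly identify the right chamber to play the role of $w$ in Lemma~\ref{lemma7} and verify the required hypotheses precisely in those sub-cases where the conclusion would yield the desired contradiction.
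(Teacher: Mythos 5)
Your proof is correct and follows essentially the same route as the paper's: everything rests on Lemma~\ref{lemma7} applied at suitably chosen chambers of the $\langle s_i,s_j\rangle$-residue, combined with the divisibility count of Lemma~\ref{divide} around the corresponding codimension-$2$ face, with the case $l(fs_j)=M-1$ eliminated exactly as in the paper. The only cosmetic differences are that for the first assertion the paper invokes Lemma~\ref{inf} (whose proof is the same angle observation) where you use the oddness of $m_{ij}$ via the arc-length count, and that the easy case $fs_j\in I$ is absorbed into your case analysis rather than dispatched up front.
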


\begin{proof}
Since $fs_i\in I$, we have $l(fs_i)=M-1$. First, assume that $fs_j\in I$. Then $l(fs_j)=M-1$ and by Lemma~\ref{lemma7}
$f(s_is_j)^k\in I$ and $f(s_is_j)^ks_i\in I$ for all $k\in \Z$, and $l(fs_is_j)=M-2$, $l(fs_is_js_i)=M-3$.  

From now on we assume that $fs_j\notin I$. 
By Lemma~\ref{inf}, $fs_is_j\in I$, which proves the first part of the lemma. 

Suppose that $3\,\not{|}\,m_{ij}$.  Then{, by Lemma~\ref{divide},} $fs_is_js_i \in I$. If $l(fs_is_j)=M$,
then $l(fs_is_js_i)=M-1$ and, by Lemma~\ref{lemma7}, all elements $fs_is_j(s_is_j)^k$ and $fs_is_j(s_is_j)^ks_i$ are contained in $I$ for all $k\in \Z$, which contradicts the assumption that $fs_j\notin I$. This implies that $l(fs_is_j)=M-2$.

Finally, suppose in addition that $5\,\not{|}\,m_{ij}$. We have already proved that $fs_is_js_i \in I$, so we are left to show that $l(fs_is_js_i)=M-3$. If we assume the contrary (i.e., $l(fs_is_js_i)=M-1$), then either $l(f(s_is_j)^2)=M-2$ and Lemma~\ref{lemma7} leads to a contradiction with $fs_j\notin I$, or $l(f(s_is_j)^2)=M$. In the latter case since $5\,\not{|}\,m_{ij}$, we see that $f(s_is_j)^2s_i\in I$, so $l(f(s_is_j)^2s_i)=M-1$, and we again apply Lemma~\ref{lemma7} to obtain a contradiction  with $fs_j\notin I$.
\end{proof}

\begin{lemma}
\label{lemma a}
Let $f\in I$ and $M=l(f)=l(fs_i)+1=l(fs_is_j)+2$. Then 

(a) $l(fs_k)=M+1$, i.e. $fs_k\notin I$ for  all $k$ distinct from $i$ and $j$;

(b) if $l(fs_is_js_i)=M-1$, then $l(fs_j)=M+1$ and $fs_j\notin I$. 

\end{lemma}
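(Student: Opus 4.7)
The plan for both parts is a proof by contradiction: I would assume the relevant length equals $M-1$ rather than $M+1$ and derive a configuration forbidden by one of the earlier technical lemmas.

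For part (a), suppose $l(fs_k)=M-1$ for some $k$ distinct from $i$ and $j$; then $fs_k\in I$ by Remark~\ref{less}. I would apply Lemma~\ref{2ways} to the two geodesic galleries $(D(f),D(fs_i),D(fs_is_j))$ and $(D(f),D(fs_k))$ emanating from $D(f)$, both having strictly decreasing lengths by hypothesis. The first gallery crosses the walls $H_{fs_if^{-1}}$ and $H_{fs_is_js_if^{-1}}$, while the second crosses $H_{fs_kf^{-1}}$. The standard fact $\langle s_i,s_j\rangle\cap S=\{s_i,s_j\}$ about special subgroups of Coxeter systems ensures that neither $s_i$ nor $s_is_js_i$ equals $s_k$ as an element of $W$, so the three walls above are pairwise distinct. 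Lemma~\ref{2ways} then forces $H_{fs_is_js_if^{-1}}\cap H_{fs_kf^{-1}}\ne\emptyset$, which, upon conjugation by $f^{-1}$, says that $s_k$ and $s_is_js_i$ generate a finite dihedral group. This contradicts Lemma~\ref{2copies}(1) applied with the relabelling $(s_i,s_j,s_k)\mapsto(s_k,s_i,s_j)$; the required hypothesis that $m_{ki}$, $m_{kj}$, $m_{ij}$ are odd or infinite is automatic because $W$ is odd-angled.

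For part (b), I would invoke Lemma~\ref{lemma7} directly (applicable in arbitrary dimension by Remark~\ref{highdim}). Suppose for contradiction that $l(fs_j)=M-1$. Then $w=f$ satisfies $l(w)=l(ws_i)+1=l(ws_j)+1$, so Lemma~\ref{lemma7} with $k=1$ (permissible since $m_{ij}\ge 3$, being odd or infinite) yields $l(fs_is_js_i)=M-3$, contradicting the hypothesis $l(fs_is_js_i)=M-1$. Hence $l(fs_j)=M+1$ and $fs_j\notin I$.

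The main bookkeeping difficulty will lie in part (a): one must verify that the three relevant walls are pairwise distinct, so that Lemma~\ref{2ways} genuinely produces walls crossed by exactly one of the galleries each. The delicate observation is that $s_is_js_i\ne s_k$ as an element of $W$, which does not follow merely from $s_is_js_i$ being a length-$3$ word but rather from the special-subgroup intersection property $\langle s_i,s_j\rangle\cap S=\{s_i,s_j\}$. Once that is in place, the contradiction via Lemma~\ref{2copies} drops out immediately, and part (b) is a direct invocation of Lemma~\ref{lemma7}.
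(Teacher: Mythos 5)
Your proof is correct and takes essentially the same route as the paper: part (b) is exactly the paper's application of Lemma~\ref{lemma7} to $fs_j,f,fs_i$, and part (a) reproduces, with the same two galleries and the same pair of walls, the combination of Lemma~\ref{2ways} and Lemma~\ref{2copies} that the paper packages as Lemma~\ref{lemma b} (applied to $fs_k,f,fs_i,fs_is_j$). The only difference is that you inline the proof of Lemma~\ref{lemma b} (including the harmless extra check that the three walls are distinct) rather than citing it.
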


\begin{proof}
First, $l(fs_k)=M-1$ is impossible by Lemma~\ref{lemma b} (applied to the elements $fs_k,f,fs_i,fs_is_j$ 
of lengths $M-1,M,M-1,M-2$, see Fig.~\ref{f lemma ab}.b), which proves the first part of the lemma. 
Now, suppose that $l(fs_j)=M-1$. Then, by Lemma~\ref{lemma7} (applied to the elements $fs_j,f,fs_i$ of lengths 
$M-1,M,M-1$), we see that  $l(fs_is_js_i)=M-3$. Hence,   $l(fs_is_js_i)=M-1$ implies  $l(fs_j)=M+1$, i.e. $fs_j\notin I$.
\end{proof}

\subsection{Rank $3$ groups}
\label{rank 3}

In Lemma~\ref{triangles} we discuss rank $3$ odd-angled Coxeter groups $W$ with 3 defining relations 
$(s_1s_2)^{m_{12}}=(s_1s_3)^{m_{13}}=(s_2s_3)^{m_{23}}=1$, $m_{ij}\ne \infty$. The case of two defining relations (i.e., one of $m_{ij}$ is equal to $\infty$) is considered in Lemma~\ref{55inf}. Rank $3$ Coxeter groups with at most one $m_{ij}\in\Z$ have disconnected divisibility diagrams. We consider them in Section~\ref{min}.

As was defined above, let $V\subset W$ be a  finite index reflection subgroup, $P$ be its fundamental domain containing $D(1)$, and let be $I$ the set of elements $w$ of $W$ with $D(w)\in P$. We denote by $f$ the element of $I$ of maximal length $M$. We will also assume that $V\subset W$ is a reflection subgroup of smallest possible index, hence $P$ does not (strictly) contain any Coxeter polytope except chambers of $\Sigma(W,S)$.

\begin{lemma} 
\label{triangles}
Let $(m_{ij},m_{ik},m_{jk})$ be distinct from $(3k_1,3k_2,k_3)$ and $(5k_1,5k_2,3k_3)$. 
Then $W$ has no finite index reflection subgroups.

\end{lemma}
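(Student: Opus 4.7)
The plan is to suppose for contradiction that the group $W$ admits a finite index reflection subgroup $V$ of smallest possible index, so that the principal fundamental domain $P$ contains no proper Coxeter sub-polytope other than single chambers, and to analyze the structure of $P$ around an element $f\in I$ of maximal length $M$. Since $W$ is odd-angled of rank $3$ with all $m_{ij}$ finite, any three-generator subgroup is infinite, so by Lemma~\ref{end-of-reduced-expression} the set $In(f)$ has cardinality at most $2$. This gives two main cases, on which I would do a case analysis based on whether $3\,|\,m_{ij}$ or $5\,|\,m_{ij}$ for the various pairs.

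\textbf{Case $|In(f)|=2$.} Write $In(f)=\{s_i,s_j\}$, so $l(fs_i)=l(fs_j)=M-1$ and $l(fs_k)=M+1$. Lemma~\ref{lemma7} (or Lemma~\ref{5angle}) then shows that all $2m_{ij}$ chambers around the codimension-$2$ face $H_{fs_if^{-1}}\cap H_{fs_jf^{-1}}$ lie in $P$, so this face is interior to $P$. Thus $D(f)$ contributes a single boundary edge (the wall separating it from $D(fs_k)$) and two boundary vertices of $P$. I would examine these two vertices via Lemma~\ref{divide}: at the $(i,k)$-vertex, the chambers of $P$ form an arc starting at $D(f),D(fs_i),D(fs_is_k),\ldots$, whose cardinality $n_{ik}$ divides $m_{ik}$ and whose angle is $\pi/n_{ik}$. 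Using Lemma~\ref{5angle} to control the lengths of $fs_is_k$ and $fs_is_ks_i$, and Lemma~\ref{lemma a} to propagate constraints further along the boundary, I expect to force $3\,|\,m_{ik}$ and $3\,|\,m_{jk}$, which puts $(m_{ij},m_{ik},m_{jk})$ into the excluded pattern $(k_1,3k_2,3k_3)$.

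\textbf{Case $|In(f)|=1$.} Write $In(f)=\{s_i\}$, so $l(fs_i)=M-1$ and $l(fs_j)=l(fs_k)=M+1$. Now $D(f)$ contributes two boundary edges of $P$ and a single interior edge (towards $D(fs_i)$). By Lemma~\ref{5angle}, $fs_is_j$ and $fs_is_k$ both lie in $I$, and their precise lengths are controlled by the divisibility of $m_{ij}$ and $m_{ik}$ by $3$ and $5$. I would analyze the two boundary vertices of $D(f)$ at the ends of the interior edge by running Lemmas~\ref{5angle},~\ref{lemma a},~\ref{lemma b} together with the divisibility constraint of Lemma~\ref{divide}: each boundary vertex has an angle $\pi/n$ with $n$ dividing the corresponding $m_{\bullet\bullet}$, and walking one step further along each boundary edge yields constraints on the next angle. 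Ruling out all combinations except $(5k_1,5k_2,3k_3)$ requires that when $5\,|\,m_{ij}$ we can force a specific length behaviour at $fs_is_js_i$ and the divisibility $3\,|\,m_{ik}$.

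The main obstacle is the bookkeeping in Case $|In(f)|=1$: one has to walk along the boundary of $P$ keeping simultaneous track of the lengths of the group elements (which determine, via Lemma~\ref{5angle} and Lemma~\ref{lemma a}, whether chambers belong to $I$) and of the divisibility constraints at each successive boundary vertex (via Lemma~\ref{divide}). The combinatorial payoff is that odd $m_{ij}$ gives the reflection symmetry of Lemma~\ref{2copies}, which is exactly what collapses the enormous a priori case list down to the two exceptional families $(3k_1,3k_2,k_3)$ and $(5k_1,5k_2,3k_3)$ that are later shown to \emph{admit} finite index reflection subgroups in Lemmas~\ref{553} and (implicitly) \ref{k}.
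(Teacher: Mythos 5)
Your setup (maximal-length element $f\in I$, case split on how many neighbours of $f$ drop in length, and the toolkit of Lemmas~\ref{5angle}, \ref{lemma a}, \ref{lemma b}, \ref{lemma7}, \ref{divide}) is the same as the paper's, and your first case ($|In(f)|=2$) can indeed be closed as you suggest: if $3\nmid m_{ik}$ then Lemma~\ref{5angle} gives $l(fs_is_k)=M-2$ and the quadruple $fs_j,f,fs_i,fs_is_k$ of lengths $M-1,M,M-1,M-2$ violates Lemma~\ref{lemma b}, so $3\mid m_{ik}$ and symmetrically $3\mid m_{jk}$, which is excluded. The genuine gap is in the other case: your plan assumes that walking along the boundary of $P$ will always ``force'' one of the two excluded divisibility patterns, but that is false. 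The hypothesis does not exclude triples such as $(5,5,5)$, $(5,5,7)$, or $(25,5,7)$ --- two labels divisible by $5$ and the third not divisible by $3$ --- and for these no divisibility contradiction can be extracted from any finite local analysis around the deepest chamber: locally the configuration is perfectly consistent, being indistinguishable from the neighbourhood of the corresponding chamber in the genuine $(5,5,3)$ polygon of Lemma~\ref{553}. The obstruction there is global, and your proposal contains no mechanism for it.

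What the paper does at exactly this point (its Case~2.1) is iterate the local step: having forced $5\mid m_{13}$, $5\mid m_{23}$ and $3\nmid m_{12}$, it repeats the same construction with $f,fs_1,fs_1s_2,fs_1s_3$ replaced by $fs_1s_3s_1s_3, fs_1s_3s_1, fs_1s_3, fs_1s_3s_1s_2$, and so on, and invokes Lemma~\ref{2copies} to show that the successive separating walls are pairwise non-intersecting; hence the strip of chambers never closes up, the elements produced are pairwise distinct, and one obtains infinitely many chambers in $P$, contradicting finiteness of the index. This unbounded-propagation argument is the decisive idea of the proof of Lemma~\ref{triangles}, and it is absent from your plan; your closing remark also misreads Lemma~\ref{2copies}, which is not a ``reflection symmetry'' collapsing the case list but precisely the statement (two specific walls bound an infinite dihedral group, so they do not meet) that keeps the iterated strip from terminating. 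Without this ingredient the proposal cannot conclude for the non-excluded triples above, so as written it does not prove the lemma.
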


\begin{proof}
Suppose that $V\subset W$ is a finite index reflection subgroup. There exists $s_i$ (say, $s_1$) such that $fs_1\in I$, $l(fs_1)=M-1$. By Lemma~\ref{5angle}, there is $s_j$ (say, $s_2$) such that $fs_1s_2\in I$, $l(fs_1s_2)=M-2$. 
By Lemma~\ref{lemma a} $fs_3\notin I$ (see Fig.~\ref{fig1_2}). 
Consider the element $fs_1s_3\in I$. Its length is either $M$ or $M-2$.


\begin{figure}[!h]
\begin{center}
\psfrag{a}{a)}
\psfrag{b}{b)}
\psfrag{c}{c)}
\psfrag{f}{\scriptsize $f$} 
\psfrag{fs1}{\scriptsize $fs_1$} 
\psfrag{fs12}{\scriptsize $fs_1s_2$} 
\psfrag{fs13}{\scriptsize $fs_1s_3$} 
\psfrag{fs3}{\scriptsize $fs_3$} 
\psfrag{M}{\tiny $M$} 
\psfrag{M-1}{\tiny $M-1$} 
\psfrag{M-2}{\tiny $M-2$} 
\psfrag{?}{\scriptsize $?$} 
\epsfig{file=./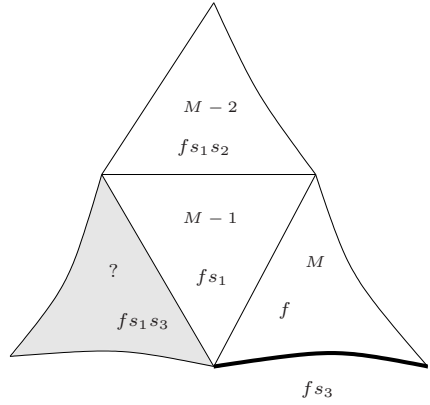,width=0.35\linewidth}
\caption{Towards the proof of Lemma~\ref{triangles}}  
\label{fig1_2}
\end{center}
\end{figure}

\noindent
{\bf Case 1:} $l(fs_1s_3)=M$.\\       
According to Lemma~\ref{lemma7}, $fs_1s_3s_1$ does not belong to $I$, {so Lemma~\ref{divide}} implies that $m_{13}$ is divisible by $3$. 
Therefore, $3$ divides neither $m_{12}$ nor $m_{23}$. Thus, either $fs_2$ or $fs_1s_2s_1$ belongs to $I$. \\

\noindent
{\bf Case 1.1:} $fs_2\in I$ (see Fig.~\ref{fig8_9_10}).\\ 
Clearly, $l(fs_2)=M-1$. So, by Lemma~\ref{5angle},  $fs_2s_3\in I$ and 
$l(fs_2s_3)=M-2$, which contradicts Lemma~\ref{lemma b} applied to elements $fs_2s_3,fs_2,f,fs_1$ of lengths $M-2,M-1,M,M-1$ respectively.\\

\begin{figure}[!h]
\begin{center}
\psfrag{a}{a)}
\psfrag{b}{b)}
\psfrag{c}{c)}
\psfrag{?}{$?$} 
\psfrag{f}{\tiny $f$} 
\psfrag{M-1}{\tiny $M-1$} 
\psfrag{M-2}{\tiny $M-2$} 
\psfrag{M-3}{\tiny $M-3$} 
\psfrag{M}{\tiny $M$} 
\psfrag{fs1}{\tiny $fs_1$} 
\psfrag{fs12}{\tiny $fs_1s_2$} 
\psfrag{fs123}{\tiny $fs_1s_2s_3$} 
\psfrag{fs132}{\tiny $fs_1s_3s_2$} 
\psfrag{fs121}{\tiny $fs_1s_2s_1$} 
\psfrag{fs13}{\tiny $fs_1s_3$} 
\psfrag{fs2}{\tiny $fs_2$} 
\psfrag{fs23}{\tiny $fs_2s_3$} 
\epsfig{file=./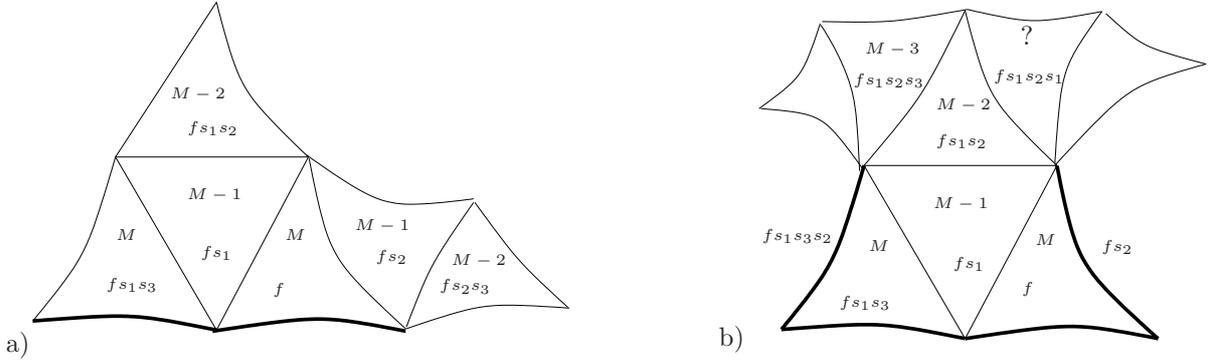,width=0.99\linewidth}
\caption{Proof of Lemma~\ref{triangles}: (a) Case 1.1, (b) Case 1.2 }  
\label{fig8_9_10}
\end{center}
\end{figure}

\pagebreak
\noindent
{\bf Case 1.2:} $fs_2\notin I$.\\ 
By symmetry, we may also assume that $fs_1s_3s_2\notin I$  (see Fig.~\ref{fig8_9_10}b). Then all the neighbors of 
$fs_1s_2$ belong to $I$. One of the neighbors of $fs_1s_2$ (say, $fs_1s_2s_3$) has length $M-3$. 
Consider the second neighbor, $fs_1s_2s_1$. Its length is either $M-1$ or $M-3$.

\begin{figure}[!h]
\begin{center}
\psfrag{a}{a)}
\psfrag{b}{b)}
\psfrag{c}{c)}
\psfrag{f}{\tiny $f$} 
\psfrag{M-1}{\tiny $M-1$} 
\psfrag{M-2}{\tiny $M-2$} 
\psfrag{M}{\tiny $M$} 
\psfrag{M-3}{\tiny $M-3$} 
\psfrag{fs1}{\tiny $fs_1$} 
\psfrag{fs2}{\tiny $fs_2$} 
\psfrag{fs13}{\tiny $fs_1s_3$} 
\psfrag{fs132}{\tiny $fs_1s_3s_2$}
\psfrag{fs12}{\tiny $fs_1s_2$} 
\psfrag{fs123}{\tiny $fs_1s_2s_3$}  
\psfrag{fs121}{\tiny $fs_1s_2s_1$}  
\psfrag{fs1212}{\tiny $fs_1s_2s_1s_2$}  
\psfrag{fs1213}{\tiny $fs_1s_2s_1s_3$}  
\psfrag{fs12132}{\tiny $fs_1s_2s_1s_3s_2$}  
\psfrag{?}{\tiny $?$} 
\psfrag{6}{\tiny $M\!\!-\!\!1$} 
\psfrag{8}{\tiny $M\!\!-\!\!2$} 
\psfrag{9}{\tiny $M\!\!-\!\!3$} 
\epsfig{file=./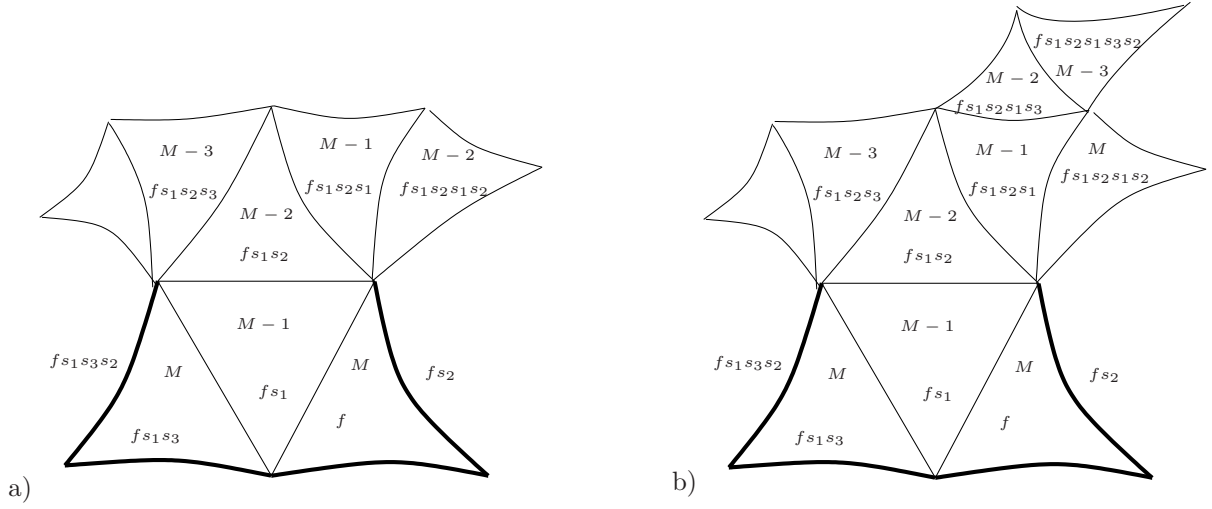,width=0.99\linewidth}
\caption{Proof of Lemma~\ref{triangles}, Case 1.2, continuation}  
\label{fig11_12}
\end{center}
\end{figure}

Assume first that  $l(fs_1s_2s_1)=M-1$, and consider $fs_1s_2s_1s_2$. Its length is either $M-2$ or $M$.
 
If $l(fs_1s_2s_1s_2)=M-2$  (see Fig.~\ref{fig11_12}a), then by Lemma~\ref{lemma7} $fs_2\in I$, which contradicts the assumption of Case~1.2. Therefore, $l(fs_1s_2s_1s_2)=M$  (see Fig.~\ref{fig11_12}b). Moreover $l(fs_1s_2s_1s_2s_1)=M+1$, and $fs_1s_2s_1s_2s_1\notin I$, which implies that $m_{12}$ is a multiple of $5$ {(again, we use Lemma~\ref{divide})}. In view of the assumptions of the lemma, this means that $m_{23}$ is neither a multiple of $3$ nor a multiple of $5$, so we can apply Lemma~\ref{5angle} to obtain that $l(fs_1s_2s_1s_3)=M-2$  and $l(fs_1s_2s_1s_3s_2)=M-3$. 
This contradicts Lemma~\ref{lemma b} (applied to the elements $f_Ms_1s_2,f_Ms_1s_2s_1,f_Ms_1s_2s_1s_3,f_Ms_1s_2s_1s_3s_2$
of lengths $M-2,M-1,M-2,M-3$).

\begin{figure}[!h]
\begin{center}
\psfrag{a}{a)}
\psfrag{b}{b)}
\psfrag{f}{\tiny $f$} 
\psfrag{M-1}{\tiny $M-1$} 
\psfrag{M-2}{\tiny $M-2$} 
\psfrag{M}{\tiny $M$} 
\psfrag{M-3}{\tiny $M-3$} 
\psfrag{fs1}{\tiny $fs_1$} 
\psfrag{fs2}{\tiny $fs_2$} 
\psfrag{fs13}{\tiny $fs_1s_3$} 
\psfrag{fs132}{\tiny $fs_1s_3s_2$}
\psfrag{fs12}{\tiny $fs_1s_2$} 
\psfrag{fs123}{\tiny $fs_1s_2s_3$}  
\psfrag{fs1232}{\tiny $fs_1s_2s_3s_2$}  
\psfrag{fs12323}{\tiny $fs_1s_2s_3s_2s_3$}  
\psfrag{fs123232}{\tiny $fs_1s_2s_3s_2s_3s_2$}  
\psfrag{fs123231}{\tiny $fs_1s_2s_3s_2s_3s_1$}  
\psfrag{fs121}{\tiny $fs_1s_2s_1$}  
\psfrag{fs1212}{\tiny $fs_1s_2s_1s_2$}  
\psfrag{fs1213}{\tiny $fs_1s_2s_1s_3$}  
\psfrag{fs12132}{\tiny $fs_1s_2s_1s_3s_2$}  
\psfrag{?}{\tiny $?$} 
\epsfig{file=./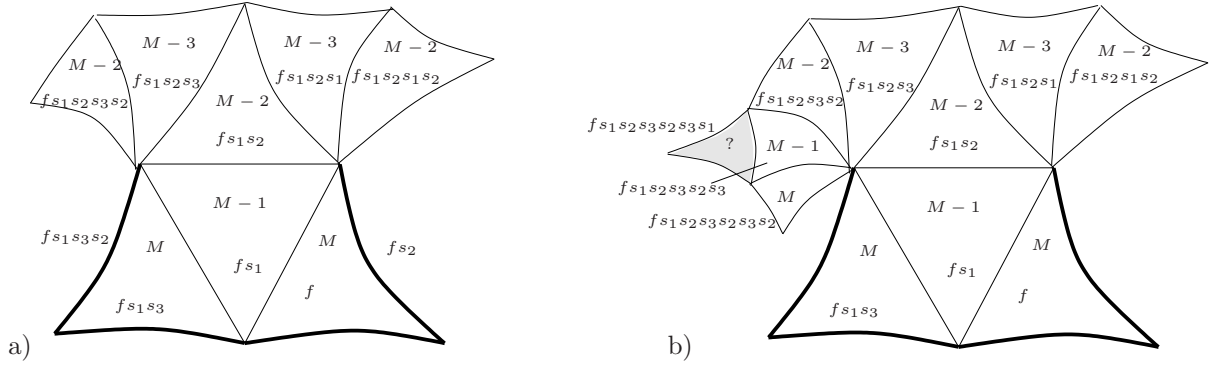,width=0.99\linewidth}
\caption{Case 1.2, continuation}  
\label{fig13_14_15}
\end{center}
\end{figure}

Thus, we may assume that $l(fs_1s_2s_1)=l(fs_1s_2s_3)=M-3$, see Fig.~\ref{fig13_14_15}a. Consider the neighbors of the above elements, $fs_1s_2s_1s_2$ and $fs_1s_2s_3s_2$, both in $I$. 
By Lemma~\ref{lemma b} (applied to the elements $fs_1s_2s_3,fs_1s_2,fs_1s_2s_1,fs_1s_2s_1s_2$) we have $l(fs_1s_2s_1s_2)=M-2$.
Similarly, $l(fs_1s_2s_3s_2)=M-2$. 

By the assumptions of the lemma, we may assume that either $m_{12}$ or $m_{23}$  (say, $m_{23}$) is not a multiple of $5$. Therefore, we may assume that $fs_1s_2s_3s_2s_3$ and $fs_1s_2s_3s_2s_3s_2$ belong to $I$ (see Fig.~\ref{fig13_14_15}b). 
By Lemma~\ref{lemma7}, $l(fs_1s_2s_3s_2s_3)=M-1$, $l(fs_1s_2s_3s_2s_3s_2)=M$. Consider $fs_1s_2s_3s_2s_3s_1$ , its length is either $M$ or $M-2$.

The latter case is impossible by Lemma~\ref{lemma b} (applied to the elements 
$fs_1s_2s_3,fs_1s_2s_3s_2,fs_1s_2s_3s_2s_3$, $fs_1s_2s_3s_2s_3s_1$ of lengths $M-3,M-2,M-1,M-2$).
In the former case, $fs_1s_2s_3s_2s_3s_1\in I$, and, reasoning as in the beginning of consideration of Case~1, we obtain that $m_{12}$ is divisible by $3$, which contradicts assumptions of the lemma.\\

\noindent
{\bf Case 2:} $l(fs_1s_3)=M-2$ { (see Fig.~\ref{fig5_6_7}a)}.\\
Similar to $fs_3$, the element $fs_2$ does not belong to $I$ either. Since at most one of the $m_{ij}$ is divisible by $3$, { and the picture is symmetric with respect to the interchange of the indices $2$ and $3$, we may assume that $m_{13}$ and at least one of $m_{12}$ and $m_{23}$ are not divisible by $3$.}  
Then $fs_1s_3s_1\in I$, its length is either $M-1$ or $M-3$. 
The latter is impossible by Lemma~\ref{lemma b} (applied to the elements $fs_1s_2,fs_1,fs_1s_3,fs_1s_3s_1$ of lengths
$M-2,M-1,M-2,M-3$).
So, $l(fs_1s_3s_1)=M-1$.
Consider $fs_1s_3s_1s_3\in I$, its length is either $M-2$ or $M$. In the former case, Lemma~\ref{lemma7} implies that $fs_3\in I$, which does not hold. Hence, $l(fs_1s_3s_1s_3)=M$  { (see Fig.~\ref{fig5_6_7}b)}. In particular, $fs_1s_3s_1s_3s_1\notin I$, and $m_{13}$ is a multiple of $5$.

\begin{figure}[!h]
\begin{center}
\psfrag{a}{a)}
\psfrag{b}{b)}
\psfrag{c}{c)}
\psfrag{f}{\tiny $f$} 
\psfrag{M-1}{\tiny $M-1$} 
\psfrag{M-2}{\tiny $M-2$} 
\psfrag{M}{\tiny $M$} 
\psfrag{M-3}{\tiny $M-3$} 
\psfrag{fs1}{\tiny $fs_1$} 
\psfrag{fs2}{\tiny $fs_2$} 
\psfrag{fs13}{\tiny $fs_1s_3$} 
\psfrag{fs132}{\tiny $fs_1s_3s_2$}
\psfrag{fs12}{\tiny $fs_1s_2$} 
\psfrag{fs123}{\tiny $fs_1s_2s_3$}  
\psfrag{fs1232}{\tiny $fs_1s_2s_3s_2$}  
\psfrag{fs12323}{\tiny $fs_1s_2s_3s_2s_3$}  
\psfrag{fs123232}{\tiny $fs_1s_2s_3s_2s_3s_2$}  
\psfrag{fs123231}{\tiny $fs_1s_2s_3s_2s_3s_1$}  
\psfrag{fs121}{\tiny $fs_1s_2s_1$}  
\psfrag{fs131}{\tiny $fs_1s_3s_1$}  
\psfrag{fs1313}{\tiny $fs_1s_3s_1s_3$}  
\psfrag{fs1212}{\tiny $fs_1s_2s_1s_2$}  
\psfrag{fs1312}{\tiny $fs_1s_3s_1s_2$}  
\psfrag{fs12132}{\tiny $fs_1s_2s_1s_3s_2$}  
\psfrag{fs131232}{\tiny $fs_1s_3s_1s_2s_3s_2$}  
\psfrag{fs13123}{\tiny $fs_1s_3s_1s_2s_3$}  
\psfrag{fs131231}{\tiny $fs_1s_3s_1s_2s_3s_1$}  
\psfrag{?}{\tiny $?$} 
\epsfig{file=./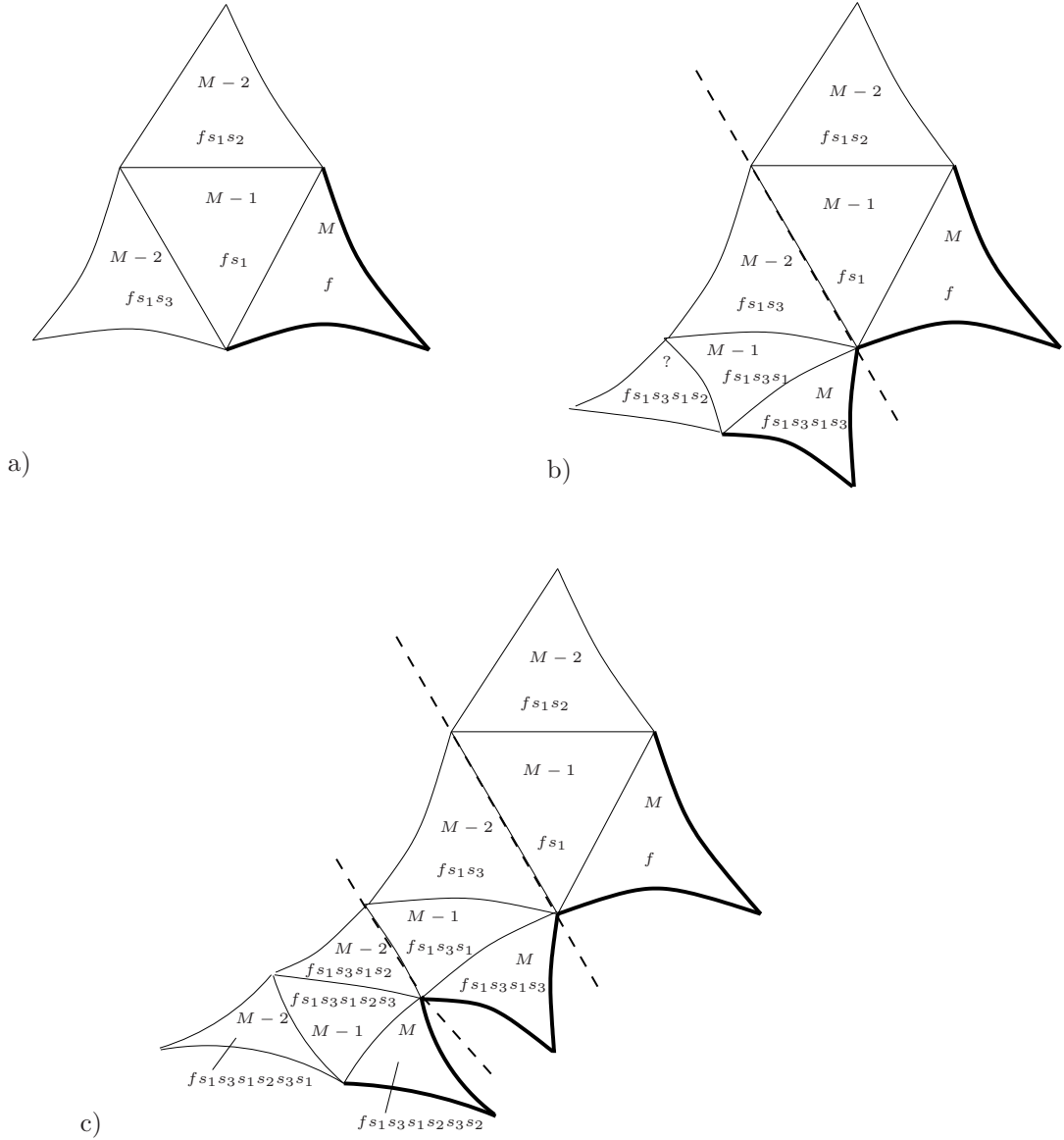,width=0.89\linewidth}
\caption{Proof of Lemma~\ref{triangles}, Case 2.}  
\label{fig5_6_7}
\end{center}
\end{figure}

Now consider $fs_1s_3s_1s_2\in I$ (see Fig.~\ref{fig5_6_7}b). If its length is $M$, then we are in Case~1, so we come to a contradiction. Therefore, $l(fs_1s_3s_1s_2)=M-2$. { Recall that, by our assumption,  at least one of $m_{12}$ and $m_{23}$ is not divisible by $3$. Consider two cases.\\

\noindent
{\bf Case 2.1:} $m_{23}$ is not divisible by $3$.\\
Repeating the reasonings as above (i.e., replacing the elements $f,fs_1,fs_1s_2,fs_1s_3$ by $fs_1s_3s_1s_3,fs_1s_3s_1$, $fs_1s_3,fs_1s_3s_1s_2$), we obtain that $m_{23}$ is divisible by $5$ (see Fig.~\ref{fig5_6_7}c). Therefore, by the assumptions of the lemma, $m_{12}$ is not divisible by $3$ either, and we can continue constructing elements of $I$ in the same way (now replacing the elements $f,fs_1,fs_1s_2,fs_1s_3$ by $fs_1s_3s_1s_2s_3s_2,fs_1s_3s_1s_2s_3,fs_1s_3s_1s_2,fs_1s_3s_1s_2s_3s_1)$. 

According to Lemma~\ref{2copies}, the wall separating chambers $D(fs_1)$ and $D(fs_1s_3)$ does not intersect the wall separating chambers $D(fs_1s_3s_1)$ and $D(fs_1s_3s_1s_2)$. Similarly, these walls do not intersect the wall separating chambers $D(fs_1s_3s_1s_2s_3)$ and $D(fs_1s_3s_1s_2s_3s_1)$ (see Fig.~\ref{fig5_6_7}c). This implies that iterating the procedure above we can construct arbitrary many elements of $I$, which contradicts the finiteness of the index of $V$.\\

\noindent
{\bf Case 2.2:} $m_{12}$ is not divisible by $3$.\\
Recall that from the assumption that $m_{13}$ is not divisible by $3$ we have deduced that $m_{13}$ is a multiple of $5$. Since Figure~\ref{fig5_6_7}a is symmetric with respect to the interchange of $s_2$ and $s_3$, the assumption that $m_{12}$ is not divisible by $3$ implies that $m_{12}$ is also divisible by $5$. Then, by the assumptions of the lemma, $m_{23}$ is not divisible by $3$, and we are in the assumptions of Case~2.1.}        
\end{proof}

\begin{lemma}
\label{55inf}
Let $(W,S)=\langle s_1,s_2,s_3\ |\ (s_1s_2)^{m_{12}}=(s_2s_3)^{m_{23}}=1 \rangle$ be a Coxeter system such that ${m_{12}}{m_{23}}$ is not divisible by $2$ and $3$. Then $W$ has no finite index reflection subgroup.

\end{lemma}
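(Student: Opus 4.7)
The strategy is to argue by contradiction in the spirit of the proof of Lemma~\ref{triangles}. Suppose $V\subsetneq W$ is a reflection subgroup of smallest possible finite index, with principal fundamental polytope $P$ and chamber set $I=\{w\in W:D(w)\subset P\}$. Choose $f\in I$ of maximal length $M$, and call a generator $s\in S$ a \emph{descent} of $f$ if $l(fs)=M-1$.

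The first step is to determine the descent pattern of $f$. Since $m_{13}=\infty$, Corollary~\ref{inf neighb} forbids $f$ from having both $s_1$ and $s_3$ as descents. If $s_2$ were a descent, then applying Lemma~\ref{5angle} with $s_i=s_2$ and $s_j=s_1,s_3$ in turn (using $3\nmid m_{12},m_{23}$) would give $fs_2s_1,fs_2s_3\in I$ of length $M-2$; then at $fs_2$ both $s_1$ and $s_3$ would be descents, violating Corollary~\ref{inf neighb}. Hence $s_2$ is an ascent of $f$, and after swapping indices I may assume $fs_1\in I$ with $l(fs_1)=M-1$ and $fs_2,fs_3\notin I$. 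Lemma~\ref{5angle} then gives $fs_1s_2\in I$ with $l(fs_1s_2)=M-2$.

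Throughout the remainder I would use a key principle: by Lemma~\ref{end-of-reduced-expression} together with $m_{13}=\infty$, no element $w\in W$ can have both $s_1$ and $s_3$ in its descent set (else $\langle s_1,s_3\rangle$ would be finite). I would exploit this repeatedly: each time I reach a new chamber of $I$, either the descent set contains both $s_1$ and $s_3$ (an immediate contradiction) or the forbidden direction is forced to be an ascent, producing a new wall of $P$.

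I would then split into two cases according to whether $s_3$ is a descent or an ascent of $fs_1$, and further subdivide based on whether $5\mid m_{12}$ or $5\mid m_{23}$ (which controls the length of $fs_is_js_i$-type elements via Lemma~\ref{5angle}). In the ``$s_3$ descent of $fs_1$'' branch, Lemma~\ref{lemma7} places the whole $\langle s_2,s_3\rangle$-orbit of $fs_1$ inside $I$; checking descents at each element of this orbit, the ``no $\{s_1,s_3\}$ two-descent'' principle forces $s_1$ to be an ascent at every step, so an additional wall of $P$ is produced at each chamber; Lemma~\ref{2copies} then guarantees these walls are pairwise non-intersecting at the $\langle s_1,s_3\rangle$-cusp of $D(f)$, and extending the iteration along the new maximal-length chambers produces an infinite strictly increasing sequence of chambers of $I$, contradicting finiteness of the index. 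The ``$s_3$ ascent of $fs_1$'' branch is handled symmetrically, with the sub-subcase $fs_1s_3\in I$ providing a new maximal-length element to which Step~1 applies, and $fs_1s_3\notin I$ producing two walls of $P$ meeting at the cusp and triggering the same Lemma~\ref{2copies}-based extension argument. The main obstacle is organizing the case analysis carefully, mirroring (but shortening, due to the additional constraints from $m_{13}=\infty$) the analogous structure in the proof of Lemma~\ref{triangles}.
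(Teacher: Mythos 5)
Your opening step is exactly the paper's: for a maximal-length $f\in I$, if $s_2$ were a descent then Lemma~\ref{5angle} (using $3\nmid m_{12}m_{23}$) would make both $s_1$ and $s_3$ descents of $fs_2$, contradicting Corollary~\ref{inf neighb} since $m_{13}=\infty$; so (after relabelling) $fs_1\in I$, $fs_2,fs_3\notin I$, and $fs_1s_2\in I$ of length $M-2$. After that, however, your proposal is a plan rather than a proof, and its decisive steps do not hold as stated. In the branch where $s_3$ is also a descent of $fs_1$, Lemma~\ref{lemma7} indeed puts the whole $\langle s_2,s_3\rangle$-orbit of $fs_1$ in $I$ (an interior vertex of $P$), but your claimed continuation breaks down: the ``no $\{s_1,s_3\}$ two-descent'' principle only constrains those orbit elements at which $s_3$ is a descent (at the others $s_1$ may perfectly well be a descent, since $m_{12}<\infty$), and even where $s_1$ is an ascent this does not make the separating wall a wall of $P$ --- an ascent neighbour can still lie in $I$. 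So no walls of $P$ are produced, Lemma~\ref{2copies} has nothing to apply to, and no iteration yielding infinitely many chambers of $I$ is ever defined. This is precisely the configuration the paper excludes by a different mechanism: using minimality of the index and the identification of $\Sigma(W,S)$ with $\H^2$, it proves that no copy of the fundamental triangle has a vertex in the interior of $P$ (cutting $P$ by the angle at such a vertex would give a Coxeter polygon for a subgroup of smaller index). You assume minimal index but never use it, and you offer no combinatorial substitute for this step.

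The other branch is likewise only gestured at. When $fs_1s_3\in I$ has length $M$, applying your Step~1 to this new maximal-length element merely recovers the chambers $fs_1$ and $fs_1s_2$ already known and produces neither a contradiction nor new chambers; the paper instead finishes by a length bookkeeping argument: oddness of $m_{12},m_{23}$ (the ``not divisible by $2$'' hypothesis, which your sketch never uses) together with Lemma~\ref{divide} forces runs of at least five chambers of $I$ around the two finite vertices at $f$ and at $fs_1s_2$, and then Lemma~\ref{lemma7} plus the already-established ``no $s_2$-descent at maximal length'' fact give $l(fs_1s_2s_1)=M-3=l(fs_1s_2s_3)$, so $fs_1s_2$ has both $s_1$ and $s_3$ as descents, contradicting Corollary~\ref{inf neighb}. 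Nothing playing the role of Lemma~\ref{divide} or of these $5$-chamber runs appears in your outline (note that Lemma~\ref{5angle}'s stronger clause is unavailable here, since $m_{12}$ or $m_{23}$ may be divisible by $5$). To repair the proposal you would need either to prove the ``no interior vertex'' statement (the paper's geometric minimality argument is the natural route) and then supply the divisibility/length-count endgame, or to construct explicitly, with proofs of distinctness and membership in $I$, the infinite family of chambers your sketch only asserts.
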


\begin{proof}
Suppose the contrary, i.e. let $V$ be a  finite index reflection subgroup of smallest index. The complex $\Sigma(W,S)$ can be identified with the hyperbolic plane $\H^2$, the chamber $D(1)$ (or $D(w)$ for any $w\in W$) with { a} fundamental triangle of $W$, and  $P$ with { a} fundamental polygon of $V$). First, we show that no copy of $D(1)$ in the tessellation of $P$ has a vertex in the interior of $P$. 

Assume that there is  a  vertex $X$ of some copy of $D(1)$ (denote this copy by $F$) contained in the interior of $P$. Let $Y$ be the other vertex of $F$ with non-zero angle, and let $Z$ be the third vertex of $F$ 
(it lies at the boundary of $\H^2$ { since $m_{13}=\infty$ and the angle at $Z$ is $0$}).  Consider the angle $\angle XYZ$ formed by the rays $YX$ and $YZ$.
It is clear that $P'=P\cap \angle XYZ$ is a Coxeter polygon (see Fig.~\ref{f55inf}): 
all its angles are either angles of $P$,
or the angle $Y$ of $F$, or the zero angles at vertices at infinity (since $m_{12}$ and $m_{23}$ are odd). Furthermore, $P'$ contains more than one 
fundamental triangle of $W$.
So, $P'$ is a fundamental domain of some  subgroup of $W$ with index smaller than the index of $V$,
which contradicts the assumptions.

\begin{figure}[!h]
\begin{center}
\psfrag{a}{a)}
\psfrag{b}{b)}
\psfrag{c}{c)}
\psfrag{F}{\large $F$} 
\psfrag{X}{$X$} 
\psfrag{X'}{ $X'$} 
\psfrag{Y}{ $Y$} 
\psfrag{Z}{ $Z$} 
\epsfig{file=./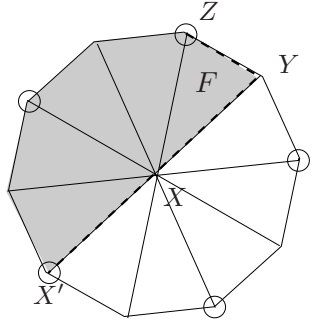,width=0.25\linewidth}
\caption{Proof of Lemma~\ref{55inf}: no vertices in the interior of $P$. The circles show the vertices at infinity.}  
\label{f55inf}
\end{center}
\end{figure}

Therefore, every vertex of any fundamental chamber in the tessellation of $P$ belongs to the boundary of $P$.

Now, using the notation as above, we will show that $fs_2\notin I$. Indeed, suppose $fs_2\in I$. Then $l(fs_2)=M-1$. 
By Lemma~\ref{5angle}, $fs_2s_1\in I$, and $l(fs_2s_1)=M-2$. Similarly, $fs_2s_3\in I$, and $l(fs_2s_3)=M-2$
(see Fig.\ref{55inf2}.a), which contradicts Corollary~\ref{inf neighb}.

\begin{figure}[!h]
\begin{center}
\psfrag{f}{\tiny $f$} 
\psfrag{fs3}{\tiny $fs_3$}
\psfrag{fs31}{\tiny $fs_3s_1$} 
\psfrag{fs32}{\tiny $fs_3s_2$} 
\psfrag{fs323}{\tiny $fs_3s_2s_3$} 
\psfrag{fs3232}{\tiny $fs_3s_2s_3s_2$} 
\psfrag{fs321}{\tiny $fs_3s_2s_1$} 
\psfrag{fs3212}{\tiny $fs_3s_2s_1s_2$}  
\psfrag{fs2}{\tiny $fs_2$} 
\psfrag{fs21}{\tiny $fs_2s_1$} 
\psfrag{fs23}{\tiny $fs_2s_3$} 
\psfrag{M}{\scriptsize $M$} 
\psfrag{M-1}{\scriptsize $M-1$} 
\psfrag{M-2}{\scriptsize $M-2$} 
\psfrag{M-3}{\scriptsize $M-3$} 
\psfrag{a}{\small a)}
\psfrag{b}{\small b)}
\epsfig{file=./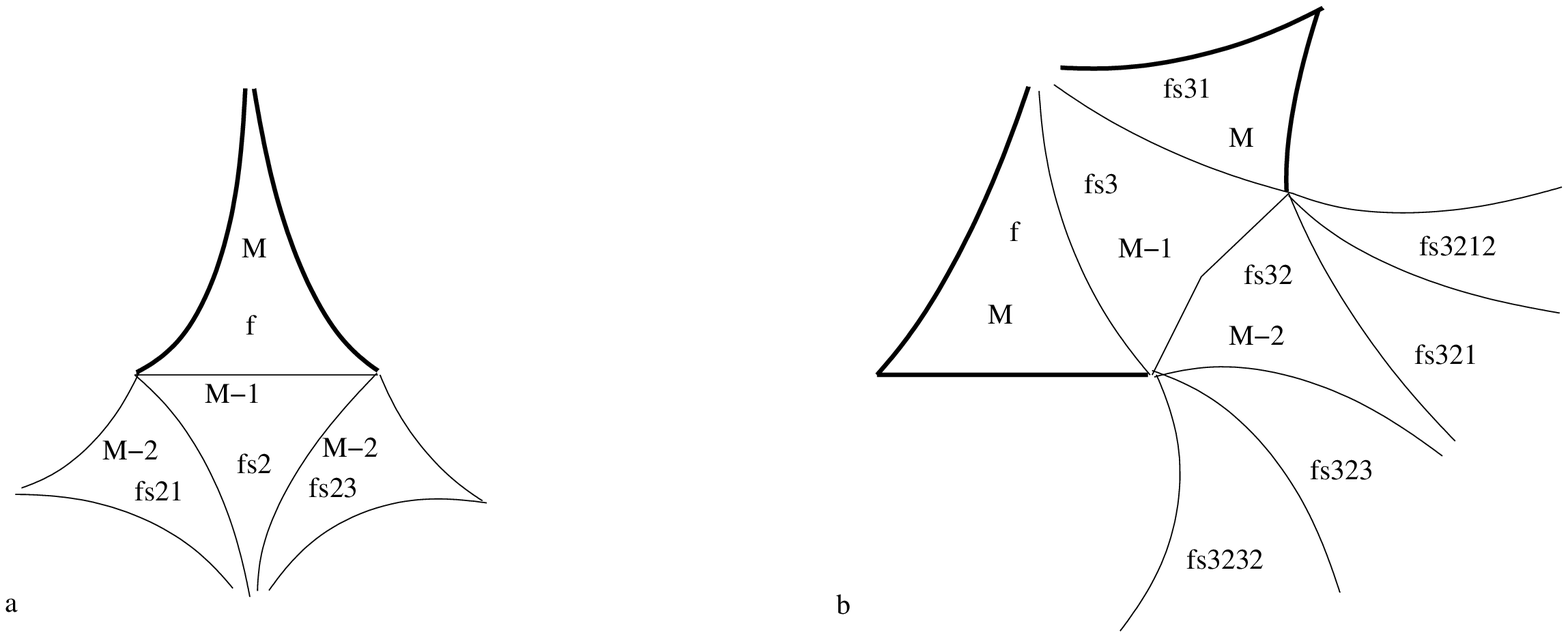,width=0.91\linewidth}
\caption{Towards the proof of Lemma~\ref{55inf}}  
\label{55inf2}
\end{center}
\end{figure}

So, one of $fs_1$ and $fs_3$ (say, $fs_3$) belongs to $I$. By Corollary~\ref{inf neighb}, this implies $l(fs_1)\ne M-1$, and hence $fs_1\notin I$. By Lemma~\ref{5angle}, $fs_3s_2\in I$, and $l(fs_3s_2)=M-2$. 
Taking in account the absence of interior vertices in $P$, this implies $l(fs_3s_1)=M$ (by Lemma~\ref{lemma7} applied to
$fs_3s_1,fs_3,fs_3s_2$, see Fig.\ref{55inf2}.b).
Furthermore, since $m_{12}m_{23}$ is not divisible by $2$ and $3$ (and since $fs_2\notin I$),
$fs_3s_2s_3\in I$, and $fs_3s_2s_3s_2\in I$. We will show that  $l(fs_3s_2s_3)=M-3$, and, similarly, $l(fs_3s_2s_1)=M-3$, which will contradict Corollary~\ref{inf neighb}.

Assume that $l(fs_3s_2s_3)=M-1$, then $l(fs_3s_2s_3s_2)=M$, otherwise by 
Lemma~\ref{lemma7} applied to $fs_3s_2$, $fs_3s_2s_3$ and $fs_3s_2s_3s_2$ we have $fs_2\in I$ which is false.
Thus, for the word $f'=fs_3s_2s_3s_2$ we have $l(f')=M$, $f',f's_2\in I$, which is already proven to be impossible.
Hence,  $l(fs_3s_2s_3)=M-3$, which completes the proof.
\end{proof}

\subsection{Rank $4$ groups}

In this section, we describe two series of rank $4$ groups without  finite index reflection subgroups. 
We keep all the notation from the previous section. 

\begin{lemma}
\label{555-4} 
Let $(W,S)$ be an odd-angled Coxeter system of rank $4$ with relations $(s_1s_i)^{5k_i}=(s_is_j)^{3k_{ij}}=1$, $2\le i,j\le 4$, where $k_i$ is not divisible by $3$ (see~Fig.~\ref{f555-42}). Then $W$ has no finite index reflection subgroups.

\end{lemma}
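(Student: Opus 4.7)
The plan is to follow the strategy of Lemma~\ref{triangles}. Assume for contradiction that $V\subset W$ is a finite index reflection subgroup of smallest possible index, with principal fundamental polytope $P$, $I=\{w\in W : D(w)\subset P\}$, and let $f\in I$ be an element of maximal length $M$.

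First I would verify that $\Sigma(W,S)$ is two-dimensional: every rank-$3$ special subgroup of $W$ is infinite, since $\langle s_1, s_i, s_j\rangle$ is a hyperbolic triangle group of type $(5k_i, 5k_j, 3k_{ij})$ (as $\tfrac15+\tfrac15+\tfrac13<1$), while $\langle s_2, s_3, s_4\rangle$ is Euclidean or hyperbolic of type $(3k_{23}, 3k_{24}, 3k_{34})$. Thus Lemma~\ref{lemma7} is applicable throughout. The argument then proceeds by case analysis on the nonempty set $A=\{i : fs_i\in I\}=\{i : l(fs_i)=M-1\}$.

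\textit{Case A: $1\in A$.} By Lemma~\ref{5angle} applied to $(s_1, s_j)$ for each $j\in\{2,3,4\}$ (using $3\nmid m_{1j}=5k_j$), we obtain $fs_1s_j\in I$ with $l(fs_1s_j)=M-2$. Varying $j$ and applying Lemma~\ref{lemma a}(a) shows $l(fs_k)=M+1$, so $fs_k\notin I$, for every $k\in\{2,3,4\}$. The three chambers $D(fs_1s_j)$ thus all lie in $P$. Since $3\mid m_{jk}$ for all distinct $j, k\in\{2,3,4\}$, Lemma~\ref{divide} applied to the codimension-$2$ faces of $P$ at the $\{s_j, s_k\}$-vertices of $D(fs_1)$, together with the length constraints coming from Lemmas~\ref{lemma b} and~\ref{lemma7}, forces further chambers $D(fs_1s_js_k)$ into $P$. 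Iterating this using dihedral rotations in the rank-$2$ subgroups $\langle s_j, s_k\rangle$ should produce an infinite sequence of distinct elements of $I$, contradicting $[W:V]<\infty$.

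\textit{Case B: $1\notin A$.} By symmetry assume $fs_2\in I$. By Lemma~\ref{5angle} applied to $(s_2, s_1)$ (using $3\nmid m_{12}$), $fs_2s_1\in I$ with $l(fs_2s_1)=M-2$, and then Lemma~\ref{lemma a}(a) rules out $fs_3, fs_4\in I$. By Lemma~\ref{5angle} we also obtain $fs_2s_j\in I$ for $j\in\{3,4\}$. A sub-case analysis on $l(fs_2s_j)\in\{M-2, M\}$ using Lemmas~\ref{lemma b}, \ref{lemma a}(b) and \ref{2copies-long} should yield a contradiction in each subcase, either by a forbidden length pattern or by an iterative construction analogous to Case~2.1 of Lemma~\ref{triangles}.

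The main obstacle will be the lengthy combinatorial case analysis paralleling Lemma~\ref{triangles}; the extra generator produces more subcases but also more constraints to exploit. The key structural feature driving the contradiction in Case~A is that \emph{three} distinct neighbors of $D(fs_1)$ lie in $P$ at length $M-2$, and, combined with the $3$-divisibility of every $m_{jk}$ for $j, k\in\{2,3,4\}$, this forces an iterative propagation of $P$ via rotations in $\langle s_j, s_k\rangle$ that cannot be compatible with $[W:V]<\infty$.
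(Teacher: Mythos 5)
Your overall framework (a maximal-length element $f\in I$ together with the length bookkeeping of Section~\ref{tools}) is the same as the paper's, but both of your cases stop short of an actual contradiction, and the decisive tool is missing. The paper finishes each case with Corollary~\ref{ending-element-corollary} and Lemma~\ref{end-of-reduced-expression}: if $fs_1\in I$, then Lemma~\ref{5angle} gives $l(fs_1s_j)=M-2$ for all $j=2,3,4$ (as $3\nmid 5k_j$), so $fs_1$ has reduced expressions ending in each of $s_2,s_3,s_4$; but $\langle s_2,s_3,s_4\rangle$ is infinite (a $(3k_{23},3k_{24},3k_{34})$ triangle group), contradicting Lemma~\ref{end-of-reduced-expression}. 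Your Case~A instead appeals to Lemma~\ref{divide} to ``force further chambers'', which does not work: since $3\mid m_{jk}$ for $j,k\in\{2,3,4\}$, an arc of three chambers of $P$ around a $\{s_j,s_k\}$-face already satisfies the divisibility condition and nothing more is forced by that lemma. (Lemma~\ref{lemma7} would in fact put the whole dihedral orbit $fs_1\langle s_j,s_k\rangle$ into $I$, but your subsequent claim that iterating ``should produce'' infinitely many elements of $I$ is precisely the part that requires proof; in Lemma~\ref{triangles}, Case~2.1, such an iteration is only completed by means of the wall-separation statement Lemma~\ref{2copies}, and you give no analogue here.)

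Case~B is where most of the work lies, and you leave it as ``a sub-case analysis \dots should yield a contradiction''. The paper's actual chain is: Lemma~\ref{divide} around the $\{s_1,s_2\}$-face (here $3\nmid 5k_2$, so five chambers genuinely are forced) gives $fs_2s_1s_2,\,fs_2s_1s_2s_1\in I$; if $l(fs_2s_1s_2)=M-1$, then $l(fs_2s_1s_2s_1)=M$ and the first case applied to $f'=fs_2s_1s_2s_1$ yields a contradiction, so $l(fs_2s_1s_2)=M-3$; then Lemma~\ref{lemma b} forces $l(fs_2s_3)=l(fs_2s_4)=M$, the same reasoning gives $l(fs_2s_1s_3)=l(fs_2s_1s_4)=M-3$, and one concludes again with Corollary~\ref{ending-element-corollary} and Lemma~\ref{end-of-reduced-expression} applied to $fs_2s_1$, whose possible last letters would include all of $s_2,s_3,s_4$. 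None of these steps, nor the finishing argument, appears in your sketch, so as written the proposal is a plan rather than a proof.
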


\begin{figure}[!h]
\begin{center}
\psfrag{5}{\scriptsize $5$}
\epsfig{file=./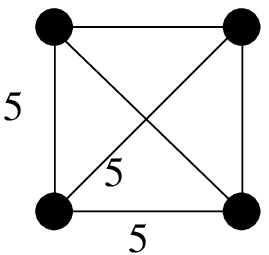,width=0.1205\linewidth}
\caption{$\Cox_{\div}(W)$, Lemma~\ref{555-4}}  
\label{f555-42}
\end{center}
\end{figure}

\begin{proof}
As usual, suppose $W$ has a  finite index reflection subgroup.

Suppose that $fs_1\in I$, i.e. $l(fs_1)=M-1$. Then, by Lemma~\ref{5angle}, $l(fs_1s_i)=M-2$ for $i=2,3,4$
(since $k_i$ has no prime divisor smaller than $5$). This contradicts Lemma~\ref{end-of-reduced-expression} and Corollary~\ref{ending-element-corollary}.

Therefore, $l(fs_1)=M+1$, and $fs_1\notin I$. Without loss of generality we assume that $fs_2\in I$, and $l(fs_2)=M-1$. 
By Lemma~\ref{5angle}, this implies $fs_2s_1\in I$,
$l(fs_2s_1)=M-2$. Since $k_i$  is not divisible by $3$, $fs_2s_1s_2,fs_2s_1s_2s_1\in I$ (see Fig.~\ref{f555-4}).
If $l(fs_2s_1s_2)=M-1$, then $l(fs_2s_1s_2s_1)=M$, which is impossible by the previous paragraph applied to the element 
$f'=fs_2s_1s_2s_1$ in place of $f$. Hence, $l(fs_2s_1s_2)=M-3$. 
This implies $l(fs_2s_3)=l(fs_2s_4)=M$, otherwise the elements
 $fs_2s_i, fs_2,fs_2s_1, fs_2s_1s_2$, where $i=3,4$, of lengths $M-2, M-1,M-2, M-3$ are in contradiction with 
Lemma~\ref{lemma b}. Reasoning as above (while showing $l(fs_2s_1s_2)=M-3$), we obtain $l(fs_2s_1s_3)=l(fs_2s_1s_4)=M-3$,
which (together with  $l(fs_2s_1)=M-2$ and $l(fs_2s_1s_2)=M-3$) is impossible by Lemma~\ref{end-of-reduced-expression} and Corollary~\ref{ending-element-corollary}.
\end{proof}

\begin{figure}[!h]
\begin{center}
\psfrag{f}{\tiny $f$} 
\psfrag{fs2}{\tiny $fs_2$}
\psfrag{fs21}{\tiny $fs_2s_1$} 
\psfrag{fs23}{\tiny $fs_2s_3$} 
\psfrag{fs212}{\tiny $fs_2s_1s_2$} 
\psfrag{fs213}{\tiny $fs_2s_1s_3$} 
\psfrag{fs321}{\tiny $fs_3s_2s_1$} 
\psfrag{fs2121}{\tiny $fs_2s_1s_2s_1$}  
\psfrag{fs2}{\tiny $fs_2$} 
\psfrag{fs21}{\tiny $fs_2s_1$} 
\psfrag{fs23}{\tiny $fs_2s_3$} 
\psfrag{M}{\scriptsize $M$} 
\psfrag{M-1}{\scriptsize $M-1$} 
\psfrag{M-2}{\scriptsize $M-2$} 
\psfrag{M-3}{\scriptsize $M-3$} 
\psfrag{a}{\small a)}
\psfrag{b}{\small b)}
\epsfig{file=./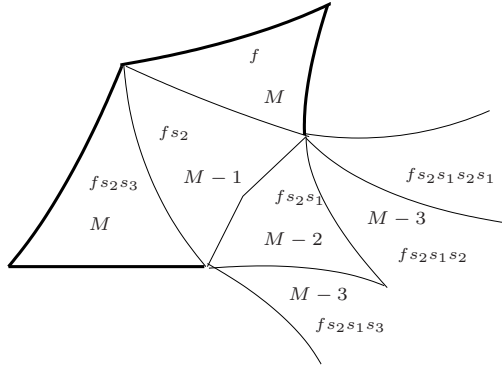,width=0.41\linewidth}
\caption{Towards the proof of Lemma~\ref{555-4}}  
\label{f555-4}
\end{center}
\end{figure}

\begin{lemma}
\label{remaining4}
Let $W=\langle s_1,\dots,s_4 \ | \ (s_is_j)^{m_{ij}}=1\rangle$ be an odd-angled Coxeter system of rank $4$ such that 
\begin{itemize}
\item $m_{12},m_{23},m_{34}\ne \infty$; 
\item $m_{12}$ and $m_{34}$ are not divisible by $3$.

\end{itemize}
Then $W$ has no finite index reflection subgroups.

\end{lemma}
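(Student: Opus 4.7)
The plan is to argue by contradiction, in the same spirit as Lemmas~\ref{triangles} and~\ref{555-4}. Suppose $V \subsetneq W$ is a finite-index reflection subgroup of smallest possible index; denote its principal fundamental domain by $P$, set $I=\{w\in W\mid D(w)\subset P\}$, and let $f\in I$ have maximal length $M$. The first observation is that in an odd-angled Coxeter group no special subgroup of rank $\ge 3$ is finite, since each of the finite rank-$3$ Coxeter groups $A_3$, $B_3$, $H_3$ carries a label~$2$ in its Coxeter diagram. Hence, by Lemma~\ref{end-of-reduced-expression}, $|In(w)|\le 2$ for every $w\in W$, so at most two generators send $f$ to an element of $I$.

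I then reduce the problem using the Subdiagram Lemma (Corollary~\ref{subd}), which is available because the finite edges $s_1-s_2-s_3-s_4$ make $\Cox_{\div}(W)$ connected. Combining it with the rank-$3$ classifications Lemma~\ref{triangles} (when the third edge is finite) and Lemma~\ref{55inf} (when it is infinite), applied to $\langle s_1,s_2,s_3\rangle$ and $\langle s_2,s_3,s_4\rangle$, together with the hypotheses $3\nmid m_{12}$ and $3\nmid m_{34}$, forces one of the following two scenarios:
\begin{itemize}
\item[(I)] $3\mid m_{23}$; or
\item[(II)] $3\nmid m_{23}$, in which case $m_{13}$ and $m_{24}$ are necessarily finite, $5\mid m_{12},\,m_{23},\,m_{34}$, and $3\mid m_{13},\,m_{24}$.
\end{itemize}

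Next, I carry out a push-in analysis on $f$. By the hypothesis-preserving symmetry $(s_1,s_2,s_3,s_4)\leftrightarrow(s_4,s_3,s_2,s_1)$, we may assume that $In(f)$ meets $\{s_1,s_2\}$. In every case, Lemma~\ref{5angle} with $3\nmid m_{12}$ forces an element of $I$ of length $M-2$ (namely $fs_1s_2$ or $fs_2s_1$). The bound $|In(\cdot)|\le 2$ on the newly found element, combined with Lemmas~\ref{lemma a}, \ref{lemma b}, \ref{inf} and the divisibility data from the previous step, will either push another element of $I$ beyond length $M$ (contradicting the maximality of $f$) or produce infinitely many elements of $I$ (contradicting the finiteness of $[W:V]$).

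The main obstacle is expected to be Scenario~(II): the divisibility of all three path-edges by $5$ means the dihedral orbits along $s_1s_2$, $s_2s_3$ and $s_3s_4$ are long, while the $3$-divisibility of $m_{13}$ and $m_{24}$ produces short closed loops through the rank-$3$ triangles $\langle s_1,s_2,s_3\rangle$ and $\langle s_2,s_3,s_4\rangle$. To close this case, I plan to invoke Lemma~\ref{2copies} repeatedly on the triples $(s_1,s_2,s_3)$ and $(s_2,s_3,s_4)$ to show that walls generated by successive push-in steps cannot intersect, whence the procedure, iterated indefinitely, yields infinitely many chambers in $P$.
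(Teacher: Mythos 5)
Your opening observations are sound: in an odd-angled group every subgroup generated by three or more elements of $S$ is infinite, so $|In(w)|\le 2$ by Lemma~\ref{end-of-reduced-expression}, and your Scenario dichotomy does follow correctly from Corollary~\ref{subd} together with Lemmas~\ref{triangles} and~\ref{55inf} (if $3\nmid m_{23}$, both triangles $\langle s_1,s_2,s_3\rangle$ and $\langle s_2,s_3,s_4\rangle$ must be of $(5,5,3)$-divisibility type, forcing $m_{13},m_{24}$ finite with $3\mid m_{13},m_{24}$ and $5\mid m_{12},m_{23},m_{34}$). The first push-in step via Lemma~\ref{5angle} is also fine. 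But from that point on the proposal is a plan, not a proof: the sentence ``will either push another element of $I$ beyond length $M$ \dots or produce infinitely many elements of $I$'' is exactly the assertion that has to be established, and nothing in the proposal establishes it. In Scenario~(I) you know only that $3\mid m_{23}$; the labels $m_{13},m_{14},m_{24}$ can be anything odd or infinite, and you give no case analysis showing how Lemmas~\ref{lemma a}, \ref{lemma b}, \ref{inf} and the bound $|In(\cdot)|\le 2$ actually force a contradiction. In Scenario~(II) the phrase ``I plan to invoke Lemma~\ref{2copies} repeatedly'' again defers the substance: you never specify which chambers are produced at each iteration, why they lie in $P$, why they are pairwise distinct, or why the relevant walls are the ones Lemma~\ref{2copies} applies to.

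For comparison, the paper's proof spends essentially all of its effort on precisely this missing part. It introduces symmetric $5$-tuples $ws_is_j,ws_i,w,ws_j,ws_js_i$ of lengths $M,M-1,M-2,M-1,M$, proves a dichotomy for the neighbours of a length-$(M-1)$ element (Claim~\ref{non or sym}), a completion statement (Claim~\ref{symm}), a reproduction step showing that an $(i,j)$-symmetric $5$-tuple with $\{i,j\}=\{1,2\}$ or $\{3,4\}$ forces $(k,n)$-symmetric $5$-tuples at its ends (Claim~\ref{reproduce}, where the hypothesis $m_{23}\ne\infty$ and Lemma~\ref{inf} are used crucially), and an existence statement (Claim~\ref{exists}, itself a delicate case analysis). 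Finally it iterates the reproduction step and uses Theorem~\ref{M-operation} to certify that the alternating words $wv_1u_1\cdots v_pu_p$ are reduced and distinct, giving infinitely many chambers in $P$. Your proposal neither reproduces this machinery nor replaces it with a worked-out alternative (your wall-separation idea could conceivably be developed in the spirit of Case~2.1 of Lemma~\ref{triangles}, but it is not developed), so the core of the argument is a genuine gap. The Subdiagram-Lemma reduction you add is correct but does not shorten this work, and the paper's proof does not need it.
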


To prove Lemma~\ref{remaining4} suppose that $W$ has a  finite index reflection subgroup $V$.
 
We say that the elements $ws_is_j,ws_i,w,ws_j,ws_js_i$ compose an {\it $(i,j)$-symmetric $5$-tuple} (or simply {\it symmetric $5$-tuple}) if all these elements lie in $I$ and are of length $M,M-1,M-2,M-1,M$ respectively.

The plan of the proof is the following. First, we show (Claims~\ref{non or sym}--\ref{exists}) that there is at least one symmetric $5$-tuple. Then we prove that the existence of one symmetric $5$-tuple implies the existence of an infinite number of them. Since, by definition, all symmetric $5$-tuples lie in $I$, we obtain a contradiction with the finiteness of the index of $V$.

\begin{claim}
\label{non or sym}
Suppose that $m_{ij}, m_{kn}\ne \infty$ and $m_{ij}m_{kn}$ is not divisible by $3$, { where $\{i,j,k,n\}=\{1,2,3,4\}$}.
Let $w,ws_j\in I$, $l(w)=M-2=l(ws_j)-1$. Then either $ws_js_k\notin I$ and $ws_js_n\notin I$, or  
 $ws_js_k\in I$, $ws_js_n\in I$ and $ws_js_k,ws_j,ws_js_n$ are contained in some symmetric $5$-tuple.

\end{claim}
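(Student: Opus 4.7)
My plan is a case analysis on $l(ws_js_k)$ and $l(ws_js_n)$ (each in $\{M-2,M\}$) combined with their membership in $I$, using the local combinatorial tools Lemmas~\ref{lemma7}, \ref{inf}, \ref{5angle} together with the ``finite $\mathrm{In}$'' fact Lemma~\ref{end-of-reduced-expression}.

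First I record a preliminary observation. From $l(ws_js_j)=l(w)=M-2$ one has $s_j\in\mathrm{In}(ws_j)$, and by Lemma~\ref{end-of-reduced-expression} the subgroup $\langle\mathrm{In}(ws_j)\rangle$ is finite. In the odd-angled rank-$4$ setting no three distinct generators of $S$ generate a finite subgroup (every finite rank-$3$ Coxeter group contains a label $2$, excluded here), so $|\mathrm{In}(ws_j)|\le 2$. In particular at most one of $s_k,s_n$ decreases $l(ws_j)$, which rules out $l(ws_js_k)=l(ws_js_n)=M-2$. By the symmetry in $k,n$ I assume $ws_js_k\in I$ and need to show $ws_js_n\in I$ together with the claimed $5$-tuple.

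In Case A, $l(ws_js_k)=M$, so $ws_js_k$ is of maximal length; applying Lemma~\ref{5angle} with $f=ws_js_k$ and the pair $(s_k,s_n)$ (using $m_{kn}<\infty$ and $3\nmid m_{kn}$) immediately yields $ws_js_n\in I$ with $l(ws_js_n)=M-2$, and the required $5$-tuple is the $(s_n,s_k)$-symmetric one with middle $ws_js_n$:
$$ws_js_k,\ ws_j,\ ws_js_n,\ ws_js_ns_k,\ ws_js_ns_ks_n.$$
In Case B, $l(ws_js_k)=M-2$, the preliminary observation forces $l(ws_js_n)=M$; to get $ws_js_n\in I$ I apply Lemma~\ref{inf} on $ws_j$ with the pair $(s_k,s_n)$---if both $ws_js_n$ and $ws_js_ks_n$ were outside $I$, then $m_{kn}=\infty$, contradicting the hypothesis. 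The residual pathology where $ws_js_n\notin I$ but $ws_js_ks_n\in I$ must be excluded by pushing to a maximal-length element adjacent to $ws_js_k$ and invoking Lemma~\ref{5angle}, or by invoking Lemma~\ref{lemma b} to rule out the length pattern $M{-}1,\,M{-}2,\,M{-}1,\,M$ that would arise; the corresponding $5$-tuple is then the $(s_k,s_n)$-symmetric one with middle $ws_js_k$.

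The main obstacle is Case B, specifically ruling out the pathology ``$ws_js_n\notin I$ while $ws_js_ks_n\in I$''. A further subtle point in both cases is the length of the opposite extreme element of the $5$-tuple (e.g.\ $ws_js_ns_k$ in Case A): a direct application of Lemma~\ref{5angle} to the maximal-length element yields this length as $M-3$ whenever $5\nmid m_{kn}$, which conflicts with the $5$-tuple pattern requiring $M-1$. In that sub-case I would argue the configuration itself cannot arise, combining the dihedral family of elements of $I$ produced by Lemma~\ref{lemma7} (applied to $ws_j$ with its two length-decreasing generators) with the wall non-intersection statement of Lemma~\ref{2copies} to reach a contradiction, so only the sub-case in which the $5$-tuple actually fits survives.
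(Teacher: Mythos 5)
Your opening move matches the paper (the bound $\lvert In(ws_j)\rvert\le 2$ via Lemma~\ref{end-of-reduced-expression}, so at most one of $s_k,s_n$ shortens $ws_j$), and using Lemma~\ref{5angle} at the maximal-length element $ws_js_k$ to force $ws_js_n\in I$ with $l(ws_js_n)=M-2$ in your Case A is a legitimate shortcut. But there are genuine gaps at exactly the two places where the real content of the claim lies. First, in neither case do you ever establish the symmetric $5$-tuple: you must show that the outer elements $ws_js_ns_k$ and $ws_js_ns_ks_n$ (resp.\ $ws_js_ks_n$, $ws_js_ks_ns_k$ in Case B) belong to $I$ and have lengths $M-1$ and $M$. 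Membership comes from the Coxeter-polytope divisibility argument (Lemma~\ref{divide}: once three chambers around the $(k,n)$-face at $ws_j$ lie in $P$ and $3\nmid m_{kn}$, at least five do) together with Lemma~\ref{lemma7}; the length $l(ws_js_ns_k)=M-1$ comes from Lemma~\ref{lemma b} applied to $w,ws_j,ws_js_n,ws_js_ns_k$, which forbids the pattern $M-2,M-1,M-2,M-3$; and $l(ws_js_ns_ks_n)=M$ then follows from Lemma~\ref{lemma7}. Your substitute --- noting the clash with the $M-3$ conclusion of Lemma~\ref{5angle} when $5\nmid m_{kn}$ and planning to show ``the configuration cannot arise'' there --- handles (and only sketchily: Lemma~\ref{lemma7} at $ws_j$ plus Lemma~\ref{2copies} is not visibly sufficient, whereas Lemma~\ref{lemma b} gives it at once) a branch that is indeed vacuous, but says nothing about the branch $5\mid m_{kn}$, where the $5$-tuple actually has to be exhibited; as written, the conclusion of the claim is asserted rather than proved.

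Second, your exclusion of the Case~B ``pathology'' ($ws_js_n\notin I$, $ws_js_ks_n\in I$) by invoking Lemma~\ref{lemma b} against the length pattern $M-1,M-2,M-1,M$ does not work: Lemma~\ref{lemma b} forbids the pattern $L-1,L,L-1,L-2$, and $M-1,M-2,M-1,M$ is not of that form in either reading direction. The correct argument (this is the paper's ``mixed'' case, with $k$ and $n$ interchanged) first notes that at least five chambers around the $(k,n)$-face at $ws_j$ lie in $P$ (so $ws_js_ks_ns_k, ws_js_ks_ns_ks_n\in I$), then rules out $l(ws_js_ks_n)=M-1$ by Lemma~\ref{lemma7} (either value of $l(ws_js_ks_ns_k)$ forces the whole dihedral coset, hence $ws_js_n$, into $I$), and finally applies Lemma~\ref{lemma b} to $w,ws_j,ws_js_k,ws_js_ks_n$ with lengths $M-2,M-1,M-2,M-3$. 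None of these steps appears in your proposal, so the argument does not go through as it stands, even though the toolkit you name is the right one.
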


\begin{proof}
We consider two cases: either $ws_js_k\notin I$ and $ws_js_n\in I$, or both $ws_js_k$ and $ws_js_n$ belong to $I$. The case  $ws_js_n\notin I$ and $ws_js_k\in I$ is identical to the former.

Suppose that $ws_js_k\notin I$, $ws_js_n\in I$. Since $ws_j\in I$ and $m_{kn}$ is not divisible by $3$, we have
 $ws_j,ws_js_n,ws_js_ns_k,ws_js_ns_ks_n,ws_js_ns_ks_ns_k\in I$ 
(see Fig~\ref{fig non or sym}.a). If $l(ws_js_n)=M$,
then $l(ws_js_ns_k)=M-1$ and Lemma~\ref{lemma7} (applied to the elements $ws_j,ws_js_n,ws_js_ns_k$ of the lengths 
$M-1,M,M-1$) implies that $ws_js_k\in I$, which is false by the assumption. So, $l(ws_js_n)=M-2$.
Similarly, $l(ws_js_ns_k)=M-3$. However, in this case the elements $w,ws_j,ws_js_n,ws_js_ns_k$ (of lengths $M-2,M-1,M-2,M-3$
respectively) are in contradiction with Lemma~\ref{lemma b}.
Therefore, either $ws_js_k\notin I$ and $ws_js_n\notin I$, or $ws_js_k\in I$ and $ws_js_n\in I$.

Now suppose that $ws_js_k\in I$, $ws_js_n\in I$.
As all subgroups generated by three or more different elements of $S$ are infinite, 
Corollary \ref{ending-element-corollary} and Lemma \ref{end-of-reduced-expression} imply that at most 2 neighbors 
of $ws_j$ can have length $l(ws_j)-1=M-2$.
Moreover, both $ws_js_k$ and $ws_js_n$ cannot be of length $M$ simultaneously: indeed, in this case Lemma~\ref{lemma7} implies that neither $ws_js_ns_k$ nor $ws_js_ks_n$ 
belong to $I$, which is impossible since $m_{kn}$ is not divisible by $3$. 

Therefore, without loss of generality we may assume that $l(ws_js_k)=M$, $l(ws_j)=M-1$, $l(ws_js_n)=M-2$ 
(see Fig~\ref{fig non or sym}.b).
By Lemma~\ref{lemma b} (applied to the elements $w,ws_j,ws_js_n,ws_js_ns_k$) we see that $l(ws_js_ns_k)\ne M-3$.
So, $l(ws_js_ns_k)=M-1$. Furthermore, Lemma~\ref{lemma7} (applied to the elements $ws_js_n,ws_js_ns_k,ws_js_ns_ks_n$) implies that $l(ws_js_ns_ks_n)\ne M-2$, i.e. $l(ws_js_ns_ks_n)= M$, and the elements $ws_js_k,ws_j,ws_js_n,ws_js_ns_k,ws_js_ns_ks_n$ compose a symmetric $5$-tuple. 
\end{proof}

\begin{figure}[!h]
\begin{center}
\psfrag{f}{\tiny $w$} 
\psfrag{fsj}{\tiny $ws_j$} 
\psfrag{fsjk}{\tiny $ws_js_k$} 
\psfrag{fsjn}{\tiny $ws_js_n$}
\psfrag{fsjnk}{\tiny $ws_js_ns_k$}
\psfrag{fsjnkn}{\tiny $ws_js_ns_ks_n$}
\psfrag{M}{\tiny $M$} 
\psfrag{M-1}{\tiny $M-1$} 
\psfrag{M-2}{\tiny $M-2$} 
\psfrag{M-3}{\tiny $M-3$} 
\psfrag{a}{\small a)}
\psfrag{b}{\small b)}
\epsfig{file=./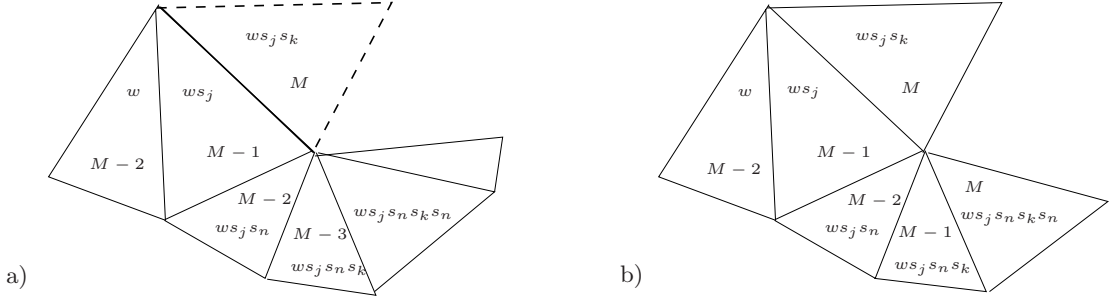,width=0.91\linewidth}
\caption{Towards the proof of Lemma~\ref{non or sym}}  
\label{fig non or sym}
\end{center}
\end{figure}

\begin{claim}
\label{symm}
Suppose that $w,ws_j\in I$, $l(w)=M-2$, $l(ws_j)=l(ws_i)=M-1$. If $m_{ij}\ne\infty$ is not divisible by $3$, then 
$ws_i\in I$, and $ws_is_j,ws_i,w,ws_j,ws_js_i\in I$ compose a symmetric $5$-tuple in $I$.

\end{claim}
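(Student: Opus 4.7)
The plan is to argue geometrically, analyzing the star of $2m_{ij}$ chambers of $\Sigma$ surrounding the codimension~$2$ face $F=D(w)\cap H_{ws_iw^{-1}}\cap H_{ws_jw^{-1}}$; this face is well defined because $m_{ij}<\infty$ forces the two walls to meet. The hypothesis $l(ws_i)=l(ws_j)=l(w)+1$ says that $w$ is ascending with respect to both $s_i$ and $s_j$, so $w$ is the minimum-length element of its right coset $w\langle s_i,s_j\rangle$. A standard property of minimal parabolic coset representatives (see e.g.~\cite{Davis}) then yields $l(wd)=l(w)+l(d)$ for every $d\in\langle s_i,s_j\rangle$, where $l(d)$ is computed in the dihedral subgroup. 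Traversing the star cyclically from $D(w)$, the chamber lengths therefore form a ``tent'' $M-2,M-1,M,\ldots,M-2+m_{ij},\ldots,M,M-1$, with $D(w)$ the unique minimum and the antipodal chamber the unique maximum.

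From this tent shape I read off that the chambers of the star of length at most $M$ are exactly the five consecutive chambers
$$
D(ws_is_j),\ D(ws_i),\ D(w),\ D(ws_j),\ D(ws_js_i),
$$
of lengths $M,M-1,M-2,M-1,M$ respectively. Since every element of $I$ has length at most $M$ by the definition of $M$, any chamber of $P$ that meets $F$ must lie in this list of five.

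Next I count how many chambers of $P$ actually meet $F$. Let $k$ denote this number. By convexity of $P$ these chambers form a connected arc of the star, and Lemma~\ref{divide} gives $k\mid m_{ij}$. Since $m_{ij}$ is odd, so is $k$. The inclusions $D(w),D(ws_j)\subset P$ yield $k\geq 2$, and oddness promotes this to $k\geq 3$. The hypothesis $3\nmid m_{ij}$ rules out $k=3$, so $k\geq 5$. Combined with the upper bound $k\leq 5$ from the previous paragraph, this forces $k=5$, and the chambers of $P$ meeting $F$ are exactly the five displayed above. In particular $ws_i$, $ws_is_j$ and $ws_js_i$ all lie in $I$ with the prescribed lengths, so together with $w$ and $ws_j$ they compose the desired symmetric $5$-tuple.

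The delicate step is the very first one: establishing the tent shape around $F$. One must verify that $w$ is minimum in its coset $w\langle s_i,s_j\rangle$, which follows from the two ascending inequalities $l(ws_i),l(ws_j)>l(w)$ either via a direct Strong Exchange Condition argument or by appeal to the general parabolic-coset theory of Coxeter groups. Once this geometric picture is in place the rest of the proof is the short divisibility count above, driven by Lemma~\ref{divide} and the hypothesis $3\nmid m_{ij}$.
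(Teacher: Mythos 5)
Your argument is correct, and it gets to the conclusion by a somewhat different mechanism than the paper does. The paper works entirely inside its own toolkit and argues by contradiction in two stages: assuming $ws_i\notin I$, the divisibility count at the codimension-$2$ face (Lemma~\ref{divide}, i.e.\ ``$m_{ij}$ not divisible by $3$'') forces $ws_js_i,ws_js_is_j\in I$, and two applications of Lemma~\ref{lemma7} (propagation around a local maximum of the length function inside $I$) force $l(ws_js_i)=M$ and $l(ws_js_is_j)=M+1$, contradicting the maximality of $M$; with $ws_i\in I$ established, the same combination of Lemma~\ref{lemma7} and the divisor count then pins down the lengths and memberships of $ws_is_j$ and $ws_js_i$. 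You instead import the standard minimal-coset-representative fact: the two ascents $l(ws_i),l(ws_j)>l(w)$ make $w$ minimal in $w\langle s_i,s_j\rangle$, so $l(wd)=l(w)+l(d)$, which yields the whole ``tent'' of lengths around the face at once; a single count with Lemma~\ref{divide} then finishes, since at most five chambers of the star have length $\le M$ while the number of $P$-chambers at the face divides $m_{ij}$ and is at least $2$, hence at least $5$, hence exactly the five claimed chambers. This is cleaner and avoids Lemma~\ref{lemma7} altogether, at the cost of invoking a general fact about parabolic cosets that the paper never states explicitly (though it is standard and available in~\cite{Davis}). One small point of care: Lemma~\ref{divide} as stated assumes the relevant intersection is a codimension-$2$ face of $P$; your upper bound $k\le 5$ already excludes the case that the face lies in the interior of $P$ (where $k=2m_{ij}$), and if the face lies in the relative interior of a single facet of $P$ then $k=m_{ij}$ divides $m_{ij}$ trivially, so the conclusion $k\mid m_{ij}$, and hence $k=5$, holds in every case --- this is the same level of care the paper itself exercises when it applies Lemma~\ref{divide}.
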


\begin{proof}
First, suppose $ws_i\notin I$. Then, since $m_{ij}$ is not divisible by $3$, $ws_js_i,ws_js_is_j\in I$ (see Fig.~\ref{symm_reproduce}.a). Furthermore, due to Lemma~\ref{lemma7} we have $l(ws_js_i)=M$, and by the same reason $l(ws_js_is_j)=M+1$, which is impossible since $ws_js_is_j\in I$.

Thus, $ws_i\in I$. Applying Lemma~\ref{lemma7} again, we see that $l(ws_is_j)=M=l(ws_js_i)$ and $l(ws_is_js_i)=M+1=l(ws_js_is_j)$, which implies $ws_is_j,ws_js_i\in I$ and  
 $ws_is_j,ws_i,w,ws_j,ws_js_i\in I$ form a symmetric $5$-tuple in $I$.
\end{proof}

\begin{figure}[!h]
\begin{center}
\psfrag{f}{\tiny $w$} 
\psfrag{fsi}{\tiny $ws_i$} 
\psfrag{fsj}{\tiny $ws_j$} 
\psfrag{fsij}{\tiny $ws_is_j$} 
\psfrag{fsjk}{\tiny $ws_js_k$} 
\psfrag{fsji}{\tiny $ws_js_i$}
\psfrag{fsjij}{\tiny $ws_js_is_j$}
\psfrag{fsik}{\tiny $ws_is_k$}
\psfrag{fsjk}{\tiny $ws_js_k$}
\psfrag{M}{\tiny $M$} 
\psfrag{M-1}{\tiny $M-1$} 
\psfrag{M+1}{\tiny $M+1$} 
\psfrag{M-2}{\tiny $M-2$} 
\psfrag{M-3}{\tiny $M-3$} 
\psfrag{a}{\small a)}
\psfrag{b}{\small b)}
\epsfig{file=./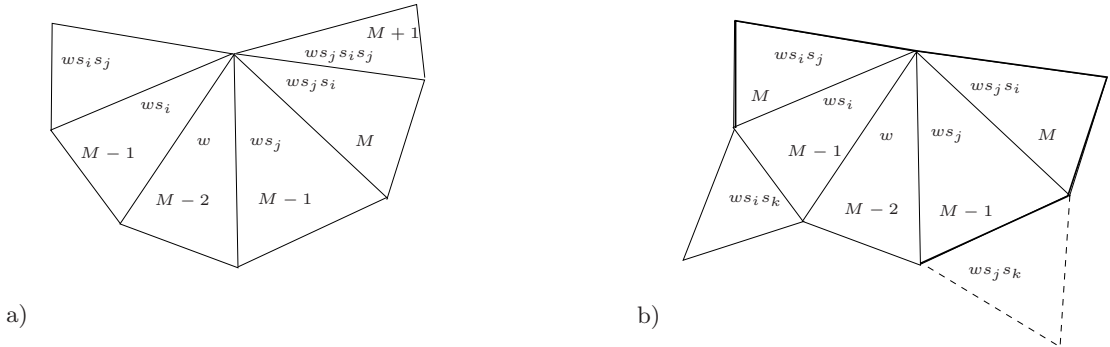,width=0.91\linewidth}
\caption{Towards the proof of (a) Claim~\ref{symm} and (b) Claim~\ref{reproduce}  }  
\label{symm_reproduce}
\end{center}
\end{figure}

\begin{claim}
\label{reproduce}
Let either $(i,j,k,n)=(1,2,3,4)$ or $(i,j,k,n)=(4,3,2,1)$. 
Suppose that an element $w$ of length $M-2$ is included in an $(i,j)$-symmetric $5$-tuple. 
Then each of the elements $ws_i$ and $ws_j$  is contained in some $(k,n)$-symmetric $5$-tuple.

In particular,
all four neighbors of $ws_i$ (and $ws_j$) are contained in $I$, 
and there are precisely two neighbors of $ws_i$ (and $ws_j$) of length $M$ 
and two neighbors of length $M-2$. 

\end{claim}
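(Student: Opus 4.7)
My plan is to construct an explicit $(k,n)$-symmetric 5-tuple containing $ws_i$; the case of $ws_j$ then follows by interchanging the roles of $i$ and $j$. The key feature I will exploit is that the given $(i,j)$-symmetric 5-tuple already supplies a maximal-length element $ws_is_j\in I$ with $l(ws_is_j)=M$, whose neighbor $(ws_is_j)s_j=ws_i$ is also in $I$.

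First I would apply Lemma~\ref{5angle} to $f:=ws_is_j$ with the pair of generators $(s_j,s_k)$. The required condition $m_{jk}\ne\infty$ reduces, in both allowed index choices $(i,j,k,n)=(1,2,3,4)$ and $(4,3,2,1)$, to $m_{23}\ne\infty$, which is one of the hypotheses of Lemma~\ref{remaining4}. Lemma~\ref{5angle} then yields $(ws_is_j)s_js_k=ws_is_k\in I$.

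Next I would invoke Claim~\ref{non or sym} on the pair $(w,ws_i)$, with the claim's ``$j$'' played by our $i$ and its complementary pair kept as $(k,n)$. Its hypothesis $m_{ij},m_{kn}\ne\infty$ and $3\nmid m_{ij}m_{kn}$ is given directly by Lemma~\ref{remaining4}, since $\{i,j\}$ and $\{k,n\}$ equal $\{1,2\}$ and $\{3,4\}$ in some order. Because we already know $ws_is_k\in I$, the first alternative of the dichotomy in that claim is ruled out, so $ws_is_n\in I$ and the explicit 5-tuple produced in its proof gives a symmetric 5-tuple whose five members are obtained from $ws_i$ by prepending words in $s_k$ and $s_n$. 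Inspecting which generators occupy the length-$M$ and length-$(M-2)$ positions shows this tuple is $(k,n)$-symmetric and that $ws_i$ sits in one of its length-$(M-1)$ slots.

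Finally, the ``in particular'' statement drops out by listing the four neighbors of $ws_i$ with their lengths: $w$ (length $M-2$) and $ws_is_j$ (length $M$) come from the original $(i,j)$-symmetric tuple, while $ws_is_k$ and $ws_is_n$ are in $I$ by what we just produced, with lengths $M$ and $M-2$ in some order, depending on which of them turns out to be the center of the new tuple. I expect the only mildly delicate point to be checking that the tuple produced by Claim~\ref{non or sym} really is $(k,n)$-symmetric and contains $ws_i$ in the correct length slot, regardless of which of $ws_is_k,ws_is_n$ ends up as its center; this is immediate from the concrete form recorded at the end of the proof of that claim.
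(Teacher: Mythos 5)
Your construction of the $(k,n)$-symmetric $5$-tuple through $ws_i$ is fine: applying Lemma~\ref{5angle} to $ws_is_j$ (which has maximal length $M$) with the pair $(s_j,s_k)$ legitimately uses $m_{jk}=m_{23}\ne\infty$, and then Claim~\ref{non or sym} applied to $(w,ws_i)$ does produce the tuple $ws_is_k,\,ws_i,\,ws_is_n,\,ws_is_ns_k,\,ws_is_ns_ks_n$, which is indeed $(k,n)$-symmetric. This part is a mild variant of the paper's route (the paper gets $ws_is_k\in I$ from Lemma~\ref{lemma a} plus Lemma~\ref{inf} rather than from Lemma~\ref{5angle}), and either way is acceptable.

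The genuine gap is your final sentence of the plan: the case of $ws_j$ does \emph{not} follow ``by interchanging the roles of $i$ and $j$'', because the standing hypotheses of Lemma~\ref{remaining4} are not symmetric in $i$ and $j$. Your argument for $ws_i$ hinges on $m_{jk}=m_{23}\ne\infty$; the mirrored argument for $ws_j$ would need $m_{ik}\ne\infty$ (or $m_{in}\ne\infty$), i.e.\ finiteness of $m_{13}$ or $m_{14}$ when $(i,j,k,n)=(1,2,3,4)$ (resp.\ $m_{24}$ or $m_{14}$ in the other case), and none of these is guaranteed --- only $m_{12},m_{23},m_{34}$ are assumed finite. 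Ruling out the bad alternative of Claim~\ref{non or sym} for $ws_j$ is exactly the nontrivial half of the paper's proof: if some neighbor of $ws_j$ is missing from $I$, then both $ws_js_k,ws_js_n\notin I$, and since $ws_js_is_k,ws_js_is_n\notin I$ (Lemma~\ref{lemma a} applied to $ws_js_i$) while $ws_j,ws_js_i\in I$, Lemma~\ref{inf} forces $m_{ik}=m_{in}=\infty$; this is then contradicted via Corollary~\ref{inf neighb}, using the already-constructed tuple through $ws_i$, which supplies a length-$(M-2)$ neighbor $ws_is_k$ or $ws_is_n$ of $ws_i$ alongside $w$. Without some argument of this kind your proof establishes the claim only for $ws_i$, not for $ws_j$.
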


\begin{proof}
By Lemma~\ref{lemma a}, each of the elements $ws_js_i$ and $ws_is_j$ has exactly one neighbor in $I$.
Since $ws_i,ws_is_j\in I$ and $m_{jk}=m_{23}\ne \infty$, this implies $ws_is_k\in I$ (see Fig.~\ref{symm_reproduce}.b). 
Applying Claim~\ref{non or sym} we see that
$ws_i$ lies in a $(k,n)$-symmetric $5$-tuple, and all four neighbors of $ws_i$ are contained in $I$ and have the desired length. If all neighbors of $ws_j$ are in $I$, then by Claim~\ref{non or sym} 
$ws_j$ lies in a $(k,n)$-symmetric $5$-tuple, and the claim is proved. 

Suppose that a neighbor of $ws_j$ does not belong to $I$. Then, again by Claim~\ref{non or sym}, $ws_js_k,ws_js_n\notin I$. Recall also that 
$ws_js_is_k,ws_js_is_n\notin I$, but $ws_j,ws_js_i\in I$.
In view of Lemma~\ref{inf}, this implies that $m_{ik}=m_{in}=\infty$. On the other hand, we have already proved that $ws_i$ is contained in a $(k,n)$-symmetric $5$-tuple, so either $ws_is_k$ or $ws_is_n$ has length $M-2$. Therefore, either elements $w,ws_i,ws_is_k$ or elements $w,ws_i,ws_is_n$ are of lengths $M-2,M-1,M-2$ respectively. In view of Corollary~\ref{inf neighb}, this contradicts  $m_{ik}=m_{in}=\infty$. 
\end{proof}

\begin{claim}
\label{exists}
There exists a $5$-tuple of elements in $I$ composing an $(i,j)$-symmetric $5$-tuple with $\{i,j\}=\{1,2\}$ or $\{3,4\}$.

\end{claim}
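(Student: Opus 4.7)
The plan is a case analysis on the descent set $In(f)$ of a maximal-length element $f\in I$, using throughout the guiding principle that applying Claim~\ref{non or sym} to a suitable $w\in I$ of length $M-2$ either produces a symmetric $5$-tuple directly, or yields ``$\cdot\notin I$'' constraints that combine with Lemmas~\ref{inf} and~\ref{lemma a} to contradict the hypotheses of Lemma~\ref{remaining4}. Throughout I rely on $|In(f)|\le 2$, which follows from Lemma~\ref{end-of-reduced-expression} together with the absence of finite $3$-generator subgroups in an odd-angled rank-$4$ Coxeter group (no $m_{ij}=2$).

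My first step is to dispose of the case $s_2\in In(f)$; by the symmetry $1\leftrightarrow 4$, $2\leftrightarrow 3$ preserving both the hypotheses of Lemma~\ref{remaining4} and the target statement, the same argument covers $s_3\in In(f)$. Setting $w=fs_2s_1$, Lemma~\ref{5angle} places $w\in I$ with $l(w)=M-2$ and $ws_1=fs_2\in I$ with $l(ws_1)=M-1$; Claim~\ref{non or sym} applied with $j=1$, $i=2$, $\{k,n\}=\{3,4\}$ then either produces a $(3,4)$-symmetric $5$-tuple, or forces $fs_2s_3,fs_2s_4\notin I$. In the latter, Lemma~\ref{lemma a}(a) applied to $f$ with $i=2$, $j=1$ gives $fs_3,fs_4\notin I$, and then Lemma~\ref{inf} applied with $(w,s_i,s_k)=(f,s_2,s_3)$ yields $m_{23}=\infty$, contradicting the hypothesis. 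Hence $In(f)\subseteq\{s_1,s_4\}$; applying Lemma~\ref{lemma a}(a) with $i=4, j=3$ (using $l(fs_4s_3)=M-2$ from Lemma~\ref{5angle}) rules out $|In(f)|=2$ in this range, since otherwise $l(fs_1)=M+1$ would contradict $s_1\in In(f)$. So, by symmetry, I may assume $In(f)=\{s_1\}$.

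With $In(f)=\{s_1\}$, I set $w=fs_1s_2$ (which lies in $I$ with $l=M-2$ by Lemma~\ref{5angle}) and apply Claim~\ref{non or sym} with $j=2$, $i=1$, $\{k,n\}=\{3,4\}$: either a $(3,4)$-symmetric $5$-tuple results, or $fs_1s_3,fs_1s_4\notin I$, which forces $m_{13}=m_{14}=\infty$ via two applications of Lemma~\ref{inf}. In the latter I examine $l(fs_1s_2s_1)$: if it equals $M-1$, then Claim~\ref{symm} applied to $w$ with $i=1$, $j=2$ directly yields a $(1,2)$-symmetric $5$-tuple.

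The main obstacle is the residual sub-case $l(fs_1s_2s_1)=M-3$. Here I would first use Lemma~\ref{inf} (applied to $(fs_1,s_2,s_3)$, exploiting $m_{23}\ne\infty$ and $fs_1s_3\notin I$) to place $fs_1s_2s_3\in I$, and then Corollary~\ref{inf neighb} together with $m_{13}=\infty$ to force $l(fs_1s_2s_3)=M-1$. A further application of Claim~\ref{non or sym} to $w=fs_1s_2$ with $j=3$, $i=4$, $\{k,n\}=\{1,2\}$ then either yields a $(1,2)$-symmetric $5$-tuple, or forces $fs_1s_2s_3s_1,fs_1s_2s_3s_2\notin I$ (both of length $M$ by Remark~\ref{less}). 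In this last branch I expect to further sub-divide on $m_{24}$ and on whether $fs_1s_2s_3s_4$ lies in $I$: in some sub-branches a final application of Claim~\ref{non or sym} extracts the desired tuple directly, while in others $fs_1s_2s_3s_4$ becomes a new maximal-length element $f'\in I$ on which the earlier case analysis can be re-run with $s_4$ playing the role of $s_1$. Completing this residual enumeration, and verifying that it terminates with a symmetric $5$-tuple rather than looping, is the bulk of the technical work I anticipate.
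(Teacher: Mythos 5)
Your reductions are sound as far as they go (the descent-set analysis, the elimination of $s_2,s_3\in In(f)$ and of $|In(f)|=2$, and the derivation of $m_{13}=m_{14}=\infty$, $l(fs_1s_2s_1)=M-3$, $fs_1s_2s_3\in I$ with $l(fs_1s_2s_3)=M-1$ all check out), but the proof is not finished: the final branch, after you apply Claim~\ref{non or sym} at $fs_1s_2s_3$ with the pair $\{k,n\}=\{1,2\}$ and obtain $fs_1s_2s_3s_1,fs_1s_2s_3s_2\notin I$, is left as an ``I expect to sub-divide \dots and verify it terminates'' sketch. That is a genuine gap, and the proposed escape route is dubious: $fs_1s_2s_3s_4$ need not lie in $I$ at all, and even when it does, re-running the analysis on a new maximal-length element gives no a priori reason the process cannot cycle, which you yourself acknowledge.

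The irony is that at the point where you got stuck you already hold everything needed to finish in one step, and this is exactly how the paper closes its analogous case. In your residual situation look at $fs_1s_2s_4$ instead of pushing past $fs_1s_2s_3$: since $m_{14}=\infty$ and $l(fs_1s_2s_1)=M-3$, Corollary~\ref{inf neighb} applied at $w=fs_1s_2$ forbids $l(fs_1s_2s_4)=M-3$, so $l(fs_1s_2s_4)=M-1$; then Claim~\ref{symm} with $w=fs_1s_2$, $(i,j)=(4,3)$ (using $fs_1s_2s_3\in I$ of length $M-1$ and $3\nmid m_{34}$) produces a $(3,4)$-symmetric $5$-tuple centered at $fs_1s_2$, and the claim is proved. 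In other words, the last application of Claim~\ref{non or sym} should have been replaced by an application of Claim~\ref{symm} to the opposite pair $\{3,4\}$. For comparison, the paper organizes the whole argument differently (by whether the dihedral orbit $I_0=\{f(s_is_j)^m, fs_j(s_is_j)^m\}$ is contained in $I$, rather than by $In(f)$), but the substance is parallel to yours up to this final move, which is the one step your write-up is missing.
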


\begin{proof}
Let $fs_i\in I$ be a neighbor of $f\in I$, $l(fs_i)=M-1=l(f)-1$.
Choose $j$ so that $\{i,j\}=\{1,2\}$ or  $\{i,j\}=\{3,4\}$ { (so, we need to construct either $(i,j)$- or $(k,n)$-symmetric $5$-tuple)}, in particular   $m_{ij}$ is finite and is not divisible by 
$3$. This implies that $I$ contains at least $5$ elements from the set 
$I_0=\{f(s_is_j)^m, fs_j(s_is_j)^m, \ m\in \Z\}$.
We consider two cases: either $I_0\subset I$ or not.

First, suppose  $I_0\subset I$. Then $l(fs_i)=l(fs_j)=M-1$. Hence, $l(fs_k)=l(fs_n)=M+1$, and $fs_k, fs_n \notin I$.    If either $fs_i$ or $fs_j$ has a neighbor in $I$ other than
$f$, $fs_is_j$ and $fs_js_i$, then the statement follows from Claim~\ref{non or sym}. 
So, we assume that $fs_i$ and $fs_j$ has no other neighbors in $I$. Then Lemma~\ref{inf} implies $m_{ik}=m_{in}=m_{jk}=m_{jn}=\infty$,
which contradicts the assumption that $m_{23}\ne \infty$.

Now, suppose  $I_0\setminus I\ne \emptyset$. If $I_0$ contains a symmetric $5$-tuple, then there
 is nothing to prove. So, we suppose that $I_0$ contains no symmetric $5$-tuple. Since $I_0\setminus I\ne \emptyset$, 
Lemma~\ref{lemma7} implies that $l(fs_j)=M+1$, $fs_j\notin I$. Thus, $f,fs_i,fs_is_j,fs_is_js_i,fs_is_js_is_j\in I$ 
(here we use the assumption that $m_{ij}$ is not divisible by $3$). Recall that $M=l(f)=l(fs_i)+1=l(fs_is_j)+2$ 
(the last equality follows from Lemma~\ref{5angle}). 
Furthermore, $l(fs_is_js_i)=M-3$ (otherwise, $l(fs_is_js_i)=M-1$, and either
$l(fs_is_js_is_j)=M$ and we obtain an $(i,j)$-symmetric $5$-tuple in $I_0$, or $l(fs_is_js_is_j)=M-2$ and by Lemma~\ref{lemma7} we have
 $I_0\subset I$ which contradicts our assumptions). 

By Claim~\ref{non or sym}, either $fs_i$ belongs to a $(k,n)$-symmetric $5$-tuple,
or $fs_i$ has no neighbors in $I$ except $f$ and $fs_is_j$. In the former case there is nothing to prove, so suppose the 
latter. By Lemma~\ref{lemma a}, $f$ has no other neighbors in $I$ except $fs_i$. So, by Lemma~\ref{inf},  
$m_{ik}=m_{in}=\infty$. Similarly, if neither $fs_is_js_k$ nor $fs_is_js_n$ belongs to $I$, then $m_{jk}=m_{jn}=\infty$
which is false since $m_{23}\ne \infty$ (see Fig.~\ref{f exists}). 
Hence, at least one of  $fs_is_js_k$ and $fs_is_js_n$  belongs to $I$. Without loss
of generality we may assume $fs_is_js_k\in I$. Note, that since $l(fs_is_js_i)=M-3$, $l(fs_is_j)=M-2$ and $m_{ik}=\infty$, by Corollary~\ref{inf neighb} we have 
$l(fs_is_js_k)\ne M-3$. So,  $l(fs_is_js_k)=M-1$,
and, similarly, $l(fs_is_js_n)=M-1$. Thus, by Claim~\ref{symm} 
 $fs_is_js_ns_k,fs_is_js_n,fs_is_j,fs_is_js_k,fs_is_js_ks_n$ compose a $(k,n)$-symmetric $5$-tuple, and the lemma is proved.
\end{proof}

\begin{figure}[!h]
\begin{center}
\psfrag{f}{\tiny $f$} 
\psfrag{fsi}{\tiny $fs_i$} 
\psfrag{fsij}{\tiny $fs_is_j$} 
\psfrag{fsiji}{\tiny $fs_is_js_i$} 
\psfrag{fsijij}{\tiny $fs_is_js_is_j$} 
\psfrag{fsijk}{\tiny $fs_is_js_k$}
\psfrag{fsjnk}{\tiny $fs_js_ns_k$}
\psfrag{fsjnkn}{\tiny $fs_js_ns_ks_n$}
\psfrag{fsjnjn}{\tiny $fs_js_ns_js_n$}
\psfrag{M}{\tiny $M$} 
\psfrag{M-1}{\tiny $M-1$} 
\psfrag{M-2}{\tiny $M-2$} 
\psfrag{M-3}{\tiny $M-3$} 
\epsfig{file=./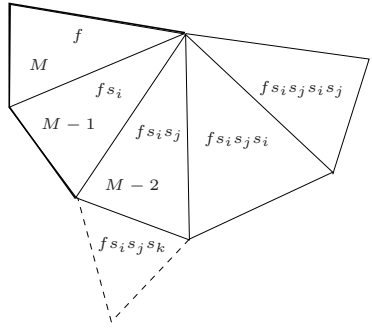,width=0.3\linewidth}
\caption{Towards the proof of Claim~\ref{exists}}  
\label{f exists}
\end{center}
\end{figure}

Now, we are able to prove Lemma~\ref{remaining4}.

\begin{proof}[Proof of Lemma~\ref{remaining4}]
By Claim~\ref{exists}, $I$ contains an $(i,j)$-symmetric $5$-tuple, say $ws_is_j,ws_i,w,ws_j,ws_js_i$, where $\{i,j\}=\{1,2\}$ or  $\{i,j\}=\{3,4\}$. 
By Claim~\ref{reproduce}, $ws_i$ belongs to a $(k,n)$-symmetric $5$-tuple lying in $I$ (where $\{k,n\}=\{1,2,3,4\}\setminus \{i,j\})$. Thus, either $ws_is_ks_n$ or $ws_is_ns_k$ has length $M-1$ and belongs again to 
$(i,j)$-symmetric $5$-tuple lying in $I$.

By successive repetitions of this argument, we obtain that for every $p \in \N$ there exists 
$v_1, \hdots, v_p \in \{s_is_j, s_js_i\}$ and $u_1, \hdots, u_p \in \{s_ks_n, s_ns_k\}$ such that 
$wv_1u_1v_2u_2 \cdots v_pu_p \in I$. By Theorem \ref{M-operation}, the expression $v_1u_1v_2u_2 \cdots v_pu_p$ is reduced, and therefore $ws_iv_1u_1v_2u_2 \cdots v_pu_p \neq ws_iv_1u_1v_2u_2 \cdots v_{p'}u_{p'}$ for $p \neq p'$. 
Hence, we can construct arbitrary many distinct elements contained in $I$, which contradicts the assumption on finiteness of $I$. Thus, $W$ has no finite index reflection subgroup.
\end{proof}

\subsection{Higher rank groups}

In this section, we point out a series of groups of rank greater than four without  finite index reflection subgroups.

\begin{lemma}
\label{k3-3l}
Let $W$ be the group with $\Cox_{\div}(W)$ shown in Fig.~\ref{f_k3-3l}.
Then $W$ has no finite index reflection subgroups.

\end{lemma}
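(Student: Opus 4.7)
The plan is to argue by contradiction, adapting the length-tracking strategy developed in Lemmas~\ref{555-4} and~\ref{remaining4}. Suppose $V\subsetneq W$ is a finite index reflection subgroup of the smallest possible index, let $P$ be its principal fundamental domain, and set $I=\{w\in W\mid D(w)\subset P\}$. Pick $f\in I$ of maximal length $M$.

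First, I would use Corollary~\ref{ending-element-corollary}, Lemma~\ref{end-of-reduced-expression}, and Corollary~\ref{inf neighb} to show that the reduced expressions of $f$ can end in at most two distinct generators $s_i, s_j\in S$, and that $s_is_j$ corresponds to one of the two multiple edges of $\Cox_{\div}(W)$ (labeled $k$ or $l$). Since the corresponding $m_{ij}$ is divisible by neither $2$ nor $3$, Lemma~\ref{5angle} then forces a symmetric length configuration $(fs_is_j,fs_i,f,fs_j,fs_js_i)\subset I$ with lengths $(M-2,M-1,M,M-1,M-2)$, of the same flavour as the symmetric $5$-tuples in Claims~\ref{non or sym}--\ref{reproduce}.

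Next, I would propagate this configuration along the chain of simple edges of $\Cox_{\div}(W)$ joining the two multiple edges. At each simple edge, Lemmas~\ref{lemma7}, \ref{lemma b}, \ref{lemma a} and~\ref{inf} (together with oddness of $W$ and the fact that the intermediate $m_{\ast\ast}$ have $3$ as their least prime divisor) should let one carry the symmetric configuration from one end of the edge to a new symmetric configuration at the other end, exactly as in the passage between an $(i,j)$- and a $(k,n)$-symmetric $5$-tuple in Claim~\ref{reproduce}. Once the configuration has travelled all the way to the far multiple edge (label $l$), Lemma~\ref{5angle} applies again and the whole procedure iterates. As in the last paragraph of the proof of Lemma~\ref{remaining4}, Theorem~\ref{M-operation} guarantees that the reduced words produced at successive iterations are pairwise distinct, yielding infinitely many elements of $I$, contrary to $[W:V]<\infty$.

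The hard part will be the bookkeeping across the chain of simple edges: one has to check at each step that the propagation does not collapse the length configuration prematurely, and that the ``forbidden'' neighbours predicted by Lemma~\ref{inf} are compatible with the actual edges of $\Cox_{\div}(W)$ (as opposed to the missing edges, which here carry the assumption $m_{\ast\ast}=\infty$). A subsidiary difficulty is that the Subdiagram Lemma (Corollary~\ref{subd}) is not available as a shortcut: every rank-$3$ and rank-$4$ special subgroup of $W$ has some $m_{ij}$ divisible by $3$ on an ``outer'' edge, and so escapes the hypotheses of Lemmas~\ref{triangles}, \ref{55inf}, \ref{555-4} and~\ref{remaining4}. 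The argument must therefore be carried out inside the full higher-rank group $W$, using the geometry of $\Sigma(W,S)$ directly.
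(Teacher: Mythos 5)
Your overall plan (take a maximal-length element $f\in I$ and do length bookkeeping with Lemmas~\ref{5angle}, \ref{lemma7}, \ref{lemma b}, \ref{inf}) is the right toolkit, and your observation that the Subdiagram Lemma gives no shortcut here is correct. But the first step of your sketch contains a genuine gap: Corollary~\ref{ending-element-corollary}, Lemma~\ref{end-of-reduced-expression} and Corollary~\ref{inf neighb} only give that $In(f)$ has at most two elements and that, if there are two, they span an edge with finite label; they do \emph{not} force this edge to be one of the two multiple edges, nor do they exclude $In(f)$ being a single middle generator or a pair joined by a $3$-labelled edge. Ruling out exactly these possibilities is the bulk of the actual proof: the paper shows, in three claims treated in order ($l(fs_i)\ne M-1$ for $3\le i\le n-2$, then for $i=2,n-1$, then for $i=1,n$, each claim using the previous ones), that \emph{no} generator can terminate a reduced expression of $f$, and this impossibility is itself the final contradiction --- the configuration you want to start from (a symmetric length pattern at a multiple edge centred at $f$) never has to exist, and asserting it as a cheap preliminary step assumes away the hard part.

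The propagation step is also not available ``exactly as in Claim~\ref{reproduce}''. That claim, and Claim~\ref{non or sym} on which it rests, require the label of the target edge to be prime to $3$; in the present diagram every intermediate edge of the chain has label divisible by $3$, so Lemma~\ref{5angle} yields only membership $fs_is_j\in I$ without the length drop, and the edge-by-edge transfer of symmetric $5$-tuples used in rank $4$ breaks down immediately after the first multiple edge. The paper's own propagation is of a different nature: inside the claim for $i=2$ it pushes single elements $fs_2s_3s_i\in I$ along the chain by induction, controlling their lengths by means of the already-established claim for the middle vertices, and only at the far multiple edge does it assemble a symmetric $5$-tuple to reach a contradiction; no infinite family of chambers is constructed. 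So while your strategy might be salvageable with substantial new arguments replacing both the ``ending letters lie on a multiple edge'' claim and the transfer mechanism along $3$-labelled edges, as written the proposal is a plan whose two key steps are unsupported.
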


\begin{figure}[!h]
\begin{center}
\psfrag{k}{\scriptsize $k$}
\psfrag{l}{\scriptsize $l$}
\psfrag{p}{\scriptsize $3,\infty$}
\epsfig{file=./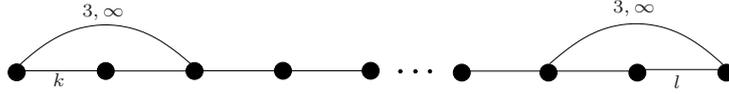,width=0.6\linewidth}
\caption{$\Cox_{\div}(W)$ for a series of groups without  finite index reflection subgroups, \ $k,l\ge 5$}
\label{f_k3-3l}
\end{center}
\end{figure}

\begin{proof}
As usual, we assume that there is a  finite index reflection subgroup $V\subset W$. We keep all the notation from the previous sections.

We label the vertices of  $\Cox_{\div}(W)$ from left to right, so that the least divisor of $m_{12}$ is $k\ge 5$, the least divisor of $m_{n-1,n}$ is $l\ge 5$, and $m_{i,i+1}\in 3\Z_{\mathrm{odd}}$ for $i=2,\dots n-2$, $m_{13},m_{n-2,n}\in \{3\Z_{\mathrm{odd}},\infty \}$, $m_{ij}=\infty$ for all other pairs $i,j$.

To prove the lemma, we take an element $f$ of maximal length $M$ in $I$ and show that all its neighbors have length $M+1$, i.e. there is no geodesic from $f$ to $1$. We consider the neighbors $fs_i$ for $i\in [3,n-2]$ in Claim~\ref{M3n-2}, for $i=2,n-2$ in Claim~\ref{M2n-1}, and for $i=1,n$ in Claim~\ref{M1n}. { Note that the assertion $l(fs_i)\ne M-1$ is equivalent to $fs_i\notin I$ due to convexity of $I$, see Lemma~\ref{geodesic-gallery-lemma-2}.}

\begin{claim}
\label{M3n-2}
 $l(fs_i)\ne M-1$ for $3\le i\le n-2$.
\end{claim}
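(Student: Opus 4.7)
The plan is to assume, for contradiction, that $l(fs_i) = M-1$ for some $i \in [3, n-2]$, and to derive a violation of the finiteness of $I$ by combining a structural analysis of the set $In(fs_i)$ with iterated use of Lemma~\ref{5angle}.

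First, I would constrain $In(fs_i)$. Since $W$ is odd-angled with all $m_{jk} \ge 3$, every rank-$3$ special subgroup of $W$ is infinite, so $\Sigma(W,S)$ has dimension $2$; by Lemma~\ref{end-of-reduced-expression}, $|In(w)| \le 2$ for every $w \in W$, and by Corollary~\ref{inf neighb} any two letters in $In(w)$ span a finite dihedral group. Applying Lemma~\ref{5angle} to $f$ with the neighbor $fs_i$ and each of the chain letters $s_{i-1}, s_{i+1}$ (both satisfying $m_{i,i\pm 1}<\infty$) gives $fs_is_{i-1}, fs_is_{i+1} \in I$; if both had length $M-2$, Corollary~\ref{inf neighb} applied to $fs_i$ would force $m_{i-1,i+1}$ finite, contradicting $m_{i-1,i+1}=\infty$. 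Hence, up to the symmetry $s_{i-1}\leftrightarrow s_{i+1}$, I may assume $l(fs_is_{i+1})=M$, so $g:=fs_is_{i+1}$ is a max-length element of $I$ with $s_{i+1}\in In(g)$.

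Next, I would iterate the same analysis to $g$ in place of $f$: this produces a new counterexample $(g, i+1)$ whenever $i+1\in[3,n-2]$. Choosing the counterexample $(f,i)$ so that $i$ is maximal (in this WLOG sub-case; the symmetric sub-case instead forces $i=3$ by minimality) reduces matters to the boundary case $i=n-2$. At this boundary, $g=fs_{n-2}s_{n-1}$ is max-length, and the ``moreover'' clause of Lemma~\ref{5angle} applied to $g$ with neighbor $fs_{n-2}=gs_{n-1}$ and the boundary letter $s_n$ yields $l(fs_{n-2}s_n)=M-2$, because the boundary edge $(n-1,n)$ has $m_{n-1,n}$ divisible by the prime $l\ge 5$ and is therefore coprime to~$6$, in particular not divisible by~$3$. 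Thus $s_n\in In(fs_{n-2})$.

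Combined with the size-and-pair constraints on $In(fs_{n-2})$ (the only possible partner for $s_n$ in a finite dihedral subgroup is $s_{n-1}$, which is excluded by the WLOG hypothesis), this forces $In(fs_{n-2})=\{s_n\}$ and hence $l(fs_{n-2}s_{n-3})=M$, producing yet another max-length element of $I$. Iterating this leftward through the chain, and using the symmetric boundary edge $(1,2)$ at the other end (where $3\nmid m_{12}$), together with Lemma~\ref{2copies} and Lemma~\ref{2copies-long} to certify that the walls separating the successive max-length chambers do not intersect, produces arbitrarily many pairwise distinct chambers of $P$, contradicting $|I|=[W:V]<\infty$. The main obstacle is the careful bookkeeping of the $In$-sets and of reduced expressions throughout the iteration, and the verification via the wall-non-intersection lemmas that the chambers constructed at successive stages are genuinely new rather than repetitions of earlier ones.
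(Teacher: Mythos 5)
Your opening paragraph is essentially correct and coincides with the paper's starting point: both $fs_is_{i-1}$ and $fs_is_{i+1}$ lie in $I$ by Lemma~\ref{5angle}, and they cannot both have length $M-2$ because $m_{i-1,i+1}=\infty$ for $3\le i\le n-2$ (Corollary~\ref{inf neighb}). But the skeleton you build on this has a genuine gap. The reduction to the boundary case $i=n-2$ via an extremal choice of $i$ does not work: the dichotomy only guarantees that \emph{at least one} of the two chain directions carries a max-length element, and if at your maximal-$i$ counterexample the realized direction is the inward one (i.e.\ $l(fs_is_{i+1})=M-2$), no counterexample of larger index is produced and maximality is not contradicted; the ``symmetric sub-case forces $i=3$'' remark has the same defect, since the realized direction is dictated by the actual lengths, not by your WLOG. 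Worse, once you start iterating, the guaranteed max-length neighbour-of-neighbour at the new counterexample $(g,i+1)$ is trivially $gs_{i+1}s_i=f$, the chamber you came from, so the walk is never forced to advance towards the boundary --- it can simply backtrack. The case ``some chain-neighbour $fs_is_j$ with $j>i$ drops to length $M-2$'' is precisely the heart of the claim, and your proposal never treats it in the interior of the chain. The paper attacks it directly: if $l(fs_is_j)=M-2$ for some $j>i$, then $m_{tj}=\infty$ for all $t<i$ and Corollary~\ref{inf neighb} give $l(fs_is_t)=M$ for all $t<i$; iterating Lemma~\ref{5angle} down the chain puts every $fs_is_t$ into $I$, and at $t=1$ the edge $(1,2)$ with $3\nmid m_{12}$ forces $l(fs_is_1)=M-2$, a contradiction. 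The symmetric argument excludes drops with $j<i$, so all neighbours of $fs_i$ would have length $M$, which is absurd --- no walk to the boundary is needed.

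The closing mechanism you propose is also unsubstantiated: ``arbitrarily many pairwise distinct chambers, certified by Lemmas~\ref{2copies} and~\ref{2copies-long}'' is asserted, not argued, and because of the backtracking issue above there is no guarantee that new chambers keep appearing. In fact, if you carry out your own boundary bounce correctly (your computation $In(fs_{n-2})=\{s_n\}$ is fine, modulo the slip that divisibility by a prime $l\ge 5$ does not by itself give coprimality to $6$ --- what you need is that $l$ is the \emph{least} prime divisor of $m_{n-1,n}$), the leftward propagation terminates after at most $n$ steps in a direct length contradiction at the $(1,2)$-edge: Lemma~\ref{5angle} applied to $fs_{n-2}s_2$, $fs_{n-2}$ and $s_1$ forces $l(fs_{n-2}s_1)=M-2$, while $s_1\notin In(fs_{n-2})$ forces $l(fs_{n-2}s_1)=M$. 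This is exactly the paper's propagation argument in disguise, and no infinitude or wall-non-intersection considerations are needed or available. So the correct core of your idea reduces to the paper's proof, but as written the proposal is missing the argument for the interior case and replaces the finite contradiction with an unproved infinitude claim.
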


\begin{proof}
Suppose the contrary, i.e. there exists $i$, $3\le i\le n-2$, such that $l(fs_i)=M-1$.
Consider the elements $fs_is_j$.
Suppose that $l(fs_is_j)=M-2$ for some $j>i$.
Then, since $m_{tj}=\infty$ for all $t<i$, Corollary~\ref{inf neighb} implies that 
\begin{equation}
\label{M}
l(fs_is_t)=M \quad {\text{ for all}}  \quad t<i, 
\end{equation}
see Fig.~\ref{cl1}.
Furthermore, by Lemma~\ref{5angle} (applied to the elements $f,fs_i$ and $fs_is_{i-1}$), we see that $fs_is_{i-1}\in I$.
Similarly, applying   Lemma~\ref{5angle} to the elements $fs_is_{t+1},fs_i$ and $fs_is_t$ we see that $fs_is_t\in I$
for all $t<i$. In particular,  $fs_is_2,fs_is_1\in I$. Since $m_{12}$ is not divisible by $3$, Lemma~\ref{5angle} 
implies also that $l(fs_is_1)=M-2$ which contradicts~(\ref{M}).

The contradiction implies that $l(fs_is_j)=M$ for all $j>i$. Similarly, we can prove that $l(fs_is_j)=M$ for all $j<i$. Thus, all the neighbors of $fs_i$ have length $M$ which is clearly impossible, so the claim follows.
\end{proof}

\begin{figure}[!h]
\begin{center}
\psfrag{f}{\scriptsize $f$} 
\psfrag{fsi}{\scriptsize $fs_i$} 
\psfrag{fsij}{\scriptsize $fs_is_j$} 
\psfrag{fsiji}{\tiny $fs_is_js_i$} 
\psfrag{fsijij}{\tiny $fs_is_js_is_j$} 
\psfrag{fsijk}{\tiny $fs_is_js_k$}
\psfrag{fsjnk}{\tiny $fs_js_ns_k$}
\psfrag{fsjnkn}{\tiny $fs_js_ns_ks_n$}
\psfrag{fsjnjn}{\tiny $fs_js_ns_js_n$}
\psfrag{M}{\small $M$} 
\psfrag{M1}{\small $M-1$} 
\psfrag{M2}{\small $M-2$} 
\psfrag{M-3}{\tiny $M-3$} 
\epsfig{file=./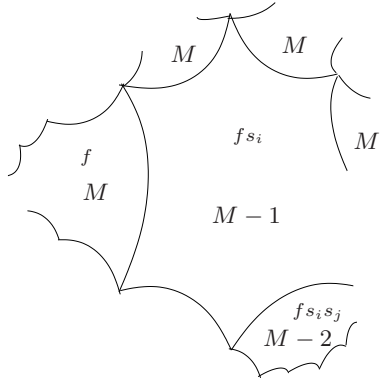,width=0.3\linewidth}
\caption{Towards the proof of Claim~\ref{M3n-2}}  
\label{cl1}
\end{center}
\end{figure}

\begin{claim}
\label{M2n-1}
  $l(fs_2)\ne M-1$, $l(fs_{n-1})\ne M-1$.
\end{claim}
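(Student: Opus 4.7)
The plan is to mirror the strategy of Claim~\ref{M3n-2}, adapted to the boundary vertex $s_2$. Suppose for contradiction that $l(fs_2)=M-1$, so $fs_2\in I$. Since the least prime divisor of $m_{12}$ is $k\geq 5$, we have $3\nmid m_{12}$, and Lemma~\ref{5angle} applied to $f$ with the pair $(s_2,s_1)$ yields $fs_2s_1\in I$ with $l(fs_2s_1)=M-2$.

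Next, I rule out most length-decreasing neighbors of $fs_2$: for every $j\geq 4$ we have $m_{1j}=\infty$, so if $l(fs_2s_j)=M-2$ then Corollary~\ref{inf neighb} applied to $w=fs_2$ with the two length-decreasing directions $s_1$ and $s_j$ would force $m_{1j}<\infty$, a contradiction. Hence $l(fs_2s_j)=M$ for $j\geq 4$, and the same reasoning gives $l(fs_2s_3)=M$ whenever $m_{13}=\infty$. Two cases remain: (A) $In(fs_2)=\{s_1\}$, and (B) $m_{13}$ is finite and $In(fs_2)=\{s_1,s_3\}$.

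In Case~B I apply Lemma~\ref{lemma7} (extended by Remark~\ref{highdim}) to $fs_2$ with the pair $(s_1,s_3)$, obtaining $fs_2s_3s_1\in I$ with length $M-3$, and then apply Lemma~\ref{lemma b} to a four-tuple of the form $(fs_2s_1,fs_2,fs_2s_3,fs_2s_3s_k)$, where a third distinct generator $s_k$ with $l(fs_2s_3s_k)=M-3$ is located via iterating Lemma~\ref{5angle} or invoking Lemma~\ref{lemma a}; this displays the forbidden pattern $L-1,L,L-1,L-2$. In Case~A every reduced expression of $fs_2$ ends in $s_1$, so $f=vs_1s_2$ with $v=fs_2s_1$ of length $M-2$; using the dihedral structure of $\langle s_1,s_2\rangle$ together with $m_{12}\geq 5$, I argue that $l(fs_1)=M+1$, which combined with Lemma~\ref{lemma a}(a) forces $In(f)=\{s_2\}$. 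I then track the alternating chain $f,fs_2,fs_2s_1,fs_2s_1s_2,\ldots$ in $I$ via Lemma~\ref{5angle} when $k\geq 7$ (giving $l(fs_2s_1s_2)=M-3$) or Lemma~\ref{lemma a}(b) when $k=5$; pursuing this chain around the dihedral relation $(s_1s_2)^{m_{12}}=1$ either produces arbitrarily many elements of $I$ or forces an element of length exceeding $M$, both contradictions. The main obstacle is the $k=5$ subcase of Case~A, where Lemma~\ref{5angle}'s third-step length assertion fails; I expect to resolve it by combining Lemma~\ref{lemma a}(b) with the divisibility constraint of Lemma~\ref{divide} applied to the local dihedral structure of $P$ at $f$.
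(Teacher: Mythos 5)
Your opening moves (Lemma~\ref{5angle} to get $fs_2s_1\in I$ of length $M-2$, and Corollary~\ref{inf neighb} to force $l(fs_2s_j)=M$ for $j\ge 4$) agree with the paper, but from there your plan has a genuine gap: it is essentially local to the vertices $s_1,s_2,s_3$, whereas the claim is not locally true. If the label $l$ at the far end of the diagram is replaced by $3$, the resulting group has a finite index reflection subgroup (Lemma~\ref{k}), and in the fundamental domain constructed there the longest element $f=w_0$ of $\langle s_1,s_2\rangle$ has maximal length $M=m_{12}$ in $I$ with $l(fs_2)=M-1$ and $In(fs_2)=\{s_1\}$ --- exactly your Case~A, with no contradiction available. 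So any argument that only uses the hypotheses you invoke (namely $k\ge 5$, $m_{1j}=\infty$ for $j\ge4$, $m_{13}$ finite or not) cannot close either case; the contradiction must be propagated to the other multiple edge. Concretely, in Case~A neither horn of your dilemma is forced: the chain $f,fs_2,fs_2s_1,\dots$ stays inside the single coset $fW_{12}$, which has only $2m_{12}$ elements, and the elements of that coset of length exceeding $M$ need not lie in $I$ (membership in $I$ is only inherited along length-decreasing steps, Remark~\ref{less}); in the model above the chain simply runs down through $W_{12}$ and back up, all inside $I$, with lengths $\le M$. Also your intermediate assertion that $In(fs_2)=\{s_1\}$ forces $l(fs_1)=M+1$ is false in that model, where $l(w_0s_1)=M-1$. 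In Case~B, Lemma~\ref{lemma b} needs three \emph{distinct} indices, so you need some $k\notin\{1,3\}$ with $l(fs_2s_3s_k)=M-3$; but the only length-$(M-3)$ neighbour your argument produces is $fs_2s_3s_1$, every $k\ge4$ is forced to give $M-1$ by Corollary~\ref{inf neighb}, and neither Lemma~\ref{5angle} nor Lemma~\ref{lemma a} can supply another one since both apply only at the maximal-length element $f$.

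For comparison, the paper first eliminates your Case~A altogether using the previous claim: since $fs_3\notin I$ by Claim~\ref{M3n-2}, Lemma~\ref{inf} gives $fs_2s_3\in I$, and $l(fs_2s_3)=M$ would contradict Claim~\ref{M3n-2} applied to $fs_2s_3$; hence $l(fs_2s_3)=M-2$ and $m_{13}$ is finite. Then, instead of looking for a local contradiction at $fs_2s_3$, it uses Lemma~\ref{lemma7} to pin down the $(s_1,s_3)$-dihedral picture, proves by induction along the simple-edge chain that $fs_2s_3s_i\in I$ with $l(fs_2s_3s_i)=M-1$ for all $3\le i\le n-2$, and only at the far end, using that $m_{n-1,n}$ is not divisible by $3$, produces an $(n,n-1)$-symmetric $5$-tuple whose interaction with $s_{n-2}$ contradicts Claim~\ref{M3n-2}. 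That global propagation is exactly the ingredient your proposal is missing.
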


\begin{proof} 
We will prove that $l(fs_2)\ne M-1$, the second part of the claim follows by symmetry.
Suppose that  $l(fs_2)= M-1$, i.e. $fs_2\in I$. 

By Lemma~\ref{5angle}, $fs_2s_1\in I$ and $l(fs_2s_1)=M-2$ (since $m_{12}$ is not divisible by $3$). By Lemma~\ref{inf neighb}, this implies $l(fs_2s_j)=M$ for all $j> 3$ (in particular, $fs_2s_j\notin I$ for $3<j\le n-2$ by Claim~\ref{M3n-2}). The element $fs_2s_3$ may be either of length $M$ or of length $M-2$.

\begin{figure}[!h]
\begin{center}
\psfrag{f}{\scriptsize $f$} 
\psfrag{fs2}{\scriptsize $fs_2$} 
\psfrag{fs1}{\scriptsize $fs_2s_1$} 
\psfrag{fs23}{\scriptsize $fs_2s_3$} 
\psfrag{fs24}{\tiny $fs_2s_4$} 
\psfrag{fs25}{\tiny $fs_2s_5$} 
\psfrag{M}{\small $M$} 
\psfrag{M1}{\small $M-1$} 
\psfrag{M2}{\small $M-2$} 
\psfrag{M-3}{\tiny $M-3$} 
\epsfig{file=./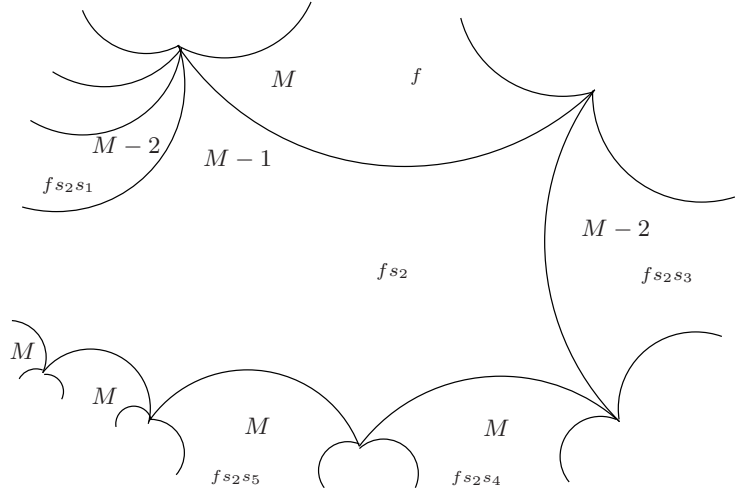,width=0.6\linewidth}
\caption{Towards the proof of Claim~\ref{M2n-1}}  
\label{cl5-22-1}
\end{center}
\end{figure}

According to Claim~\ref{M3n-2}, $fs_3\notin I$, which implies $fs_2s_3\in I$. By Claim~\ref{M3n-2}, $l((fs_2)s_3)\ne M$, so $l(fs_2s_3)= M-2$, see Fig.~\ref{cl5-22-1}. In view of Corollary~\ref{inf neighb}, this implies $m_{13}\in 3\Z_{\mathrm{odd}}$.
Applying Lemma~\ref{lemma7} to the elements $fs_2s_1$, $fs_2$ and $fs_2s_3$ of lengths $M-2, M-1$ and $M-2$, we obtain
$fs_2(s_3s_1)^k, fs_2s_3(s_1s_3)^k\in I$ $\forall k\in\Z$, and the length of all these $2m_{13}$ elements can be computed according to  Lemma~\ref{lemma7}. In particular, $l(fs_2s_3s_1)=M-3$. 

Let us figure out the lengths of the elements $fs_2s_3s_i$ for $i>2$, see Fig.~\ref{cl5-22-2}. For this, observe that $m_{1i}=\infty$ for $i>3$, and
thus we have $l(fs_2s_3s_i)=M-1$, otherwise we get a contradiction with Corollary~\ref{inf neighb}. For $i=3$ we also get $M-1$ by our assumption. 

\begin{figure}[!hb]
\begin{center}
\psfrag{f}{\scriptsize $f$} 
\psfrag{fs2}{\scriptsize $fs_2$} 
\psfrag{fs23}{\scriptsize $fs_2s_3$} 
\psfrag{fs24}{\scriptsize $fs_2s_4$} 
\psfrag{fs25}{\scriptsize $fs_2s_5$} 
\psfrag{fs26}{\scriptsize $fs_2s_6$}
\psfrag{fs234}{\scriptsize $fs_2s_3s_4$} 
\psfrag{fs235}{\scriptsize $fs_2s_3s_5$} 
\psfrag{fs236}{\scriptsize $fs_2s_3s_6$} 
\psfrag{fs21}{\scriptsize $fs_2s_1$} 
\psfrag{fs231}{\scriptsize $fs_2s_3s_1$} 
\psfrag{fs23n-2}{\scriptsize $fs_2s_3s_{n-2}$}
\psfrag{M}{\small $M$} 
\psfrag{M1}{\small $M-1$} 
\psfrag{M2}{\small $M-2$} 
\psfrag{M3}{\small $M-3$} 
\psfrag{M-3}{\tiny $M-3$} 
\epsfig{file=./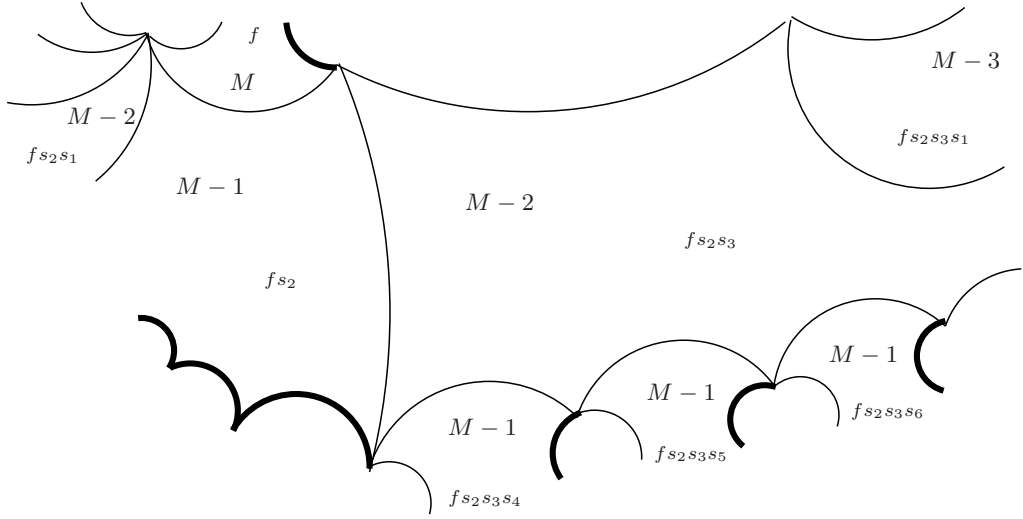,width=0.9\linewidth}
\caption{Towards the proof of Claim~\ref{M2n-1}}  
\label{cl5-22-2}
\end{center}
\end{figure}

Now we want to show that the elements $fs_2s_3s_i$ belong to $I$ for $2<i<n-1$. We will prove this by induction on $i$. For $i=3$ the statement is obvious. Assume the statement is true for $i-1$. Since $m_{i-1,i}$ is a multiple of $3$, either $fs_2s_3s_{i-1}s_i$ or $fs_2s_3s_i$ is in $I$. In the former case, $l(fs_2s_3s_{i-1}s_i)=M-2$ by Claim~\ref{M3n-2}, and we have $fs_2s_3s_i \in I$ by Lemma~\ref{lemma7}. So, in both cases $fs_2s_3s_i \in I$.

In particular, we get  $fs_2s_3s_{n-2} \in I$. Let us show that $fs_2s_3s_{n-2}s_{n-1} \notin I$. Suppose the contrary, i.e. $fs_2s_3s_{n-2}s_{n-1}\in I$. Since $l(fs_2s_3s_{n-2})=M-1$, the length of $fs_2s_3s_{n-2}s_{n-1}$ is either $M$ or $M-2$. If $l(fs_2s_3s_{n-2}s_{n-1})=M-2$, then the four elements $fs_2s_3s_{n-2}s_{n-1}$, $fs_2s_3s_{n-2}$, $fs_2s_3$, $fs_2s_3s_1$ of lengths $M-2$, $M-1$, $M-2$, $M-3$ are in contradiction with Lemma~\ref{lemma b}. Therefore, $l(fs_2s_3s_{n-2}s_{n-1})=M$, and  Lemma~\ref{5angle} implies that $fs_2s_3s_{n-2}s_n$ also belongs to $I$, and its length is $M-2$. Thus, we obtain four elements
$fs_2s_3s_{n-2}s_{n}, fs_2s_3s_{n-2}, fs_2s_3, fs_2s_3s_{n}$ of lengths $M-2,M-1, M-2,M-1$ respectively, and  $fs_2s_3s_{n-2} \in I$, see Fig.~\ref{cl5-22-3}. This contradicts Lemma~\ref{lemma7}.

\begin{figure}[!h]
\begin{center}
\psfrag{Fs231}{\scriptsize $fs_2s_3s_1$} 
\psfrag{fs23n}{\scriptsize $fs_2s_3s_n$} 
\psfrag{fs23}{\scriptsize $fs_2s_3$} 
\psfrag{fs23nn-2}{\tiny $fs_2s_3s_ns_{n-2}$} 
\psfrag{fs23nn-1}{\tiny $fs_2s_3s_ns_{n-1}$} 
\psfrag{fs23n-1n-2}{\tiny $fs_2s_3s_{n-1}s_{n-2}$} 
\psfrag{fs23n-1}{\tiny $fs_2s_3s_{n-1}$}
\psfrag{fs23n-2}{\scriptsize $fs_2s_3s_{n-2}$}
\psfrag{f23n-2n}{\tiny $fs_2s_3s_{n-2}s_n$}
\psfrag{fs23n-2n-1}{\scriptsize $fs_2s_3s_{n-2}s_{n-1}$}
\psfrag{M}{\small $M$} 
\psfrag{M1}{\small $M-1$} 
\psfrag{M2}{\small $M-2$}
\psfrag{M3}{\small $M-3$}  
\psfrag{M-3}{\tiny $M-3$} 
\epsfig{file=./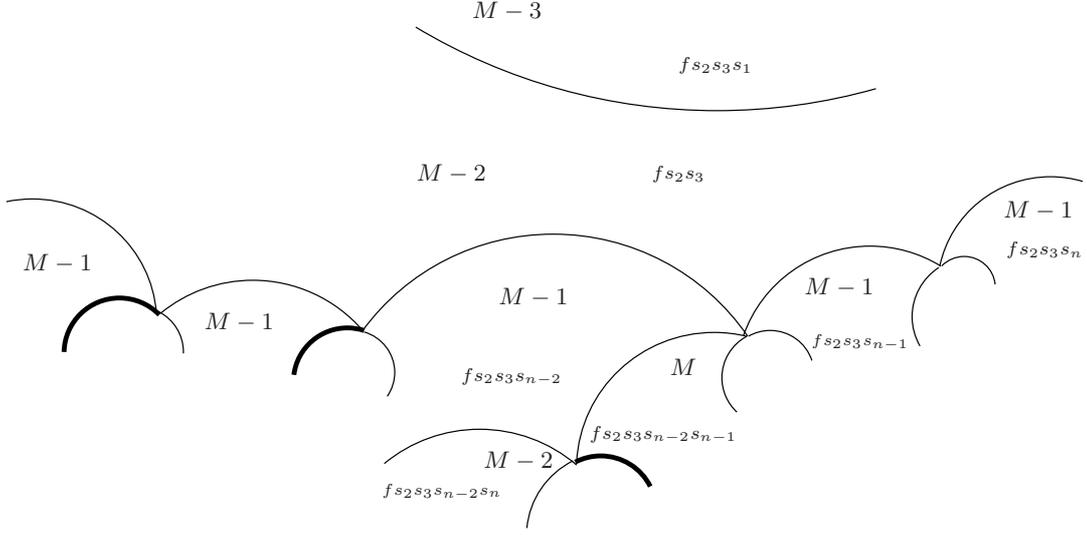,width=0.9\linewidth}
\caption{Towards the proof of Claim~\ref{M2n-1}}  
\label{cl5-22-3}
\end{center}
\end{figure}

Therefore,  $fs_2s_3s_{n-2}s_{n-1} \notin I$. This implies  $fs_2s_3s_{n-1} \in I$  (see Fig.~\ref{cl5-22-3}). Thus, we have two ``neighboring elements'' $fs_2s_3s_{n-1}$ and $fs_2s_3$ in $I$. Considering the elements obtained from $fs_2s_3$ by multiplication by $s_{n-1}$ and $s_n$, we see that at least five such elements belong to $I$ (since $m_{n-1,n}$ is not divisible by $3$). As it was proved above, $l(fs_2s_3s_{n-1})=l(fs_2s_3)+1=l(fs_2s_3s_{n})=M-1$, which means that we have an $(n,n-1)$-symmetric $5$-tuple $(fs_2s_3s_{n-1}s_n, fs_2s_3s_{n-1}, fs_2s_3, fs_2s_3s_n, fs_2s_3s_ns_{n-1})$.

Consider now the elements obtained from $fs_2s_3$ by multiplication by $s_{n-2}$ and $s_n$ (see Fig.~\ref{cl5-22-4}). Since the elements $fs_2s_3s_{n-2}, fs_2s_3, fs_2s_3s_{n}$ all belong to $I$ and have lengths $M-1,M-2,M-1$, we see that $l(fs_2s_3s_{n}s_{n-2})=M$, which implies $fs_2s_3s_{n}s_{n-2}\notin I$ by Claim~\ref{M3n-2}. In view of $fs_2s_3s_{n},fs_2s_3s_ns_{n-1}\in I$, this implies $fs_2s_3s_{n}s_{n-1}s_{n-2}\in I$, and thus $l(fs_2s_3s_{n}s_{n-1}s_{n-2})=M-1$. Since  $fs_2s_3s_{n}s_{n-1}\in I$ and $l(fs_2s_3s_{n}s_{n-1})=M$, this contradicts Claim~\ref{M3n-2}, which completes the proof of the claim. 
\end{proof}

\begin{figure}[!h]
\begin{center}
\psfrag{fs23n}{\scriptsize $fs_2s_3s_n$} 
\psfrag{fs23}{\scriptsize $fs_2s_3$} 
\psfrag{fs23nn-2}{\tiny $fs_2s_3s_ns_{n-2}$} 
\psfrag{fs23nn-1}{\tiny $fs_2s_3s_ns_{n-1}$} 
\psfrag{fs23n-1n-2}{\tiny $fs_2s_3s_{n-1}s_{n-2}$} 
\psfrag{fs23n-2n-1}{\tiny $fs_2s_3s_{n-1}s_{ n}$} 
\psfrag{fs23n-1}{\scriptsize $fs_2s_3s_{n-1}$}
\psfrag{fs23n-2}{\scriptsize $fs_2s_3s_{n-2}$}
\psfrag{M}{\small $M$} 
\psfrag{M1}{\small $M-1$} 
\psfrag{M2}{\small $M-2$} 
\psfrag{M-3}{\tiny $M-3$} 
\epsfig{file=./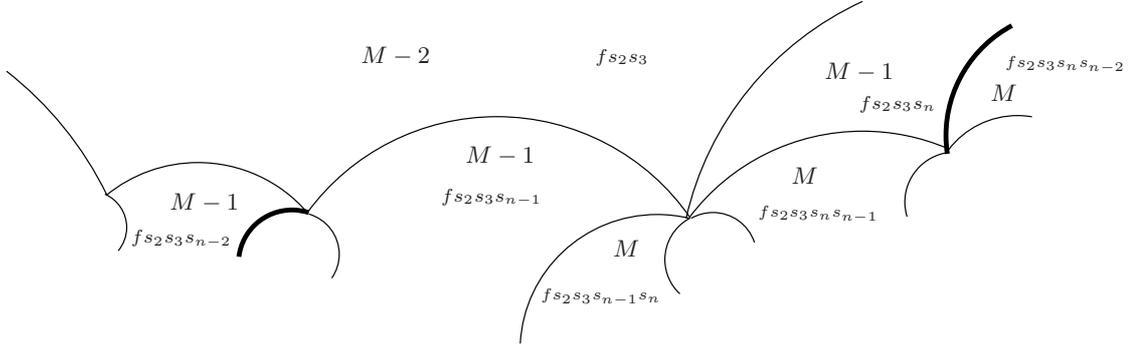,width=0.98\linewidth}
\caption{Towards the proof of Claim~\ref{M2n-1}}  
\label{cl5-22-4}
\end{center}
\end{figure}

\begin{claim}
\label{M1n} 
 $l(fs_1)\ne M-1$, $l(fs_{n})\ne M-1$.
\end{claim}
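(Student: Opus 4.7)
The plan is to prove $l(fs_1) \ne M-1$; the case $l(fs_n) \ne M-1$ will follow by the symmetric argument with the prime $l$ in the role of $k$. I would start by supposing for contradiction that $fs_1 \in I$. First, Lemma~\ref{5angle} applied with $i=1, j=2$ (and $3 \nmid m_{12}$) gives $fs_1 s_2 \in I$ with $l(fs_1 s_2) = M-2$. Then Corollary~\ref{inf neighb} applied to $fs_1$ forces $l(fs_1 s_j) = M$ for every $j \ge 4$, since $m_{2j} = \infty$. Moreover, $fs_1 s_3 \notin I$ whenever $l(fs_1 s_3) = M$: otherwise $fs_1 s_3$ would be a maximal-length element of $I$ with $(fs_1 s_3)s_3 = fs_1 \in I$ of length $M-1$, violating Claim~\ref{M3n-2}. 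In particular, if $m_{13} < \infty$, Lemma~\ref{5angle} places $fs_1 s_3 \in I$ and thus forces $l(fs_1 s_3) = M-2$.

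Next, I would invoke Lemma~\ref{inf} with $w = fs_1, s_i = s_2, s_k = s_3$ (using $m_{23} < \infty$) to conclude that at least one of $fs_1 s_3, fs_1 s_2 s_3$ lies in $I$. In either case $fs_1 s_2 s_3 \in I$: if $fs_1 s_3 \in I$ then by the previous step $l(fs_1 s_3) = M-2$, so Lemma~\ref{lemma7} applied to $fs_1$ with the two downward directions $s_2, s_3$ yields $fs_1 s_2 s_3 \in I$; otherwise the conclusion is immediate. From here, the plan is to mimic the proof of Claim~\ref{M2n-1}, with $fs_1 s_2$ playing the role of $fs_2$. In particular, by induction on $i$, using the inductive hypothesis together with the divisibility $3 \mid m_{i,i+1}$, Corollary~\ref{inf neighb}, Lemma~\ref{5angle}, Lemma~\ref{divide}, and Lemma~\ref{lemma b}, one obtains elements $fs_1 s_2 s_3 s_i \in I$ for each $i \in \{4, \ldots, n-2\}$ with the appropriate length. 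Once the chain reaches the right end of the diagram, the concluding argument of Claim~\ref{M2n-1} (with the prime $l \ge 5$ now in the role of $k$) produces a symmetric $5$-tuple near $s_{n-1}, s_n$, which in turn yields a maximal-length element of $I$ whose neighbors contradict Claim~\ref{M3n-2}.

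The main obstacle I anticipate is the careful length bookkeeping along this chain, especially in the sub-case $m_{13} = \infty$ where Lemma~\ref{5angle} cannot be applied to the pair $(1, 3)$. In that sub-case, one must rely instead on Lemma~\ref{inf} and the convexity of $I$ (Lemma~\ref{geodesic-gallery-lemma-2}) to propagate each chain element into $I$ with the appropriate length, which makes the inductive step substantially more delicate than in the proof of Claim~\ref{M2n-1}.
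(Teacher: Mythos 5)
Your opening steps match the paper's: assume $fs_1\in I$, get $fs_1s_2\in I$ with $l(fs_1s_2)=M-2$ from Lemma~\ref{5angle}, rule out $l(fs_1s_j)=M-2$ for $j\ge 4$ by Corollary~\ref{inf neighb}, and observe via Claim~\ref{M3n-2} that $fs_1s_3\notin I$ when $l(fs_1s_3)=M$. But after that there is a genuine gap: you never establish the pivotal fact that $fs_1s_2s_1,fs_1s_2s_1s_2\in I$ (Lemma~\ref{divide} at the $(1,2)$-corner, using that the least divisor of $m_{12}$ is $k\ge 5$ and that $fs_2\notin I$ by Claim~\ref{M2n-1}) and that $l(fs_1s_2s_1)=M-3$ (ruling out $M-1$ by applying Claim~\ref{M2n-1} to $fs_1s_2s_1s_2$). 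The paper needs this fact twice: first, combined with Lemma~\ref{lemma b} applied to $fs_1s_3,fs_1,fs_1s_2,fs_1s_2s_1$ it kills the case $m_{13}\ne\infty$ outright; second, with $m_{13}=\infty$ and Corollary~\ref{inf neighb} it pins down $l(fs_1s_2s_j)=M-1$ for all $j\ge 3$, which is exactly what powers the induction along the diagram and the final $5$-tuple contradiction at the $(n-1,n)$ end. Your proposal has no substitute for either use, so the ``appropriate lengths'' you invoke for the chain elements are not actually available.

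Moreover, you have the delicate sub-case backwards. In the sub-case you keep alive, $m_{13}<\infty$ with $fs_1s_3\in I$ of length $M-2$, Lemma~\ref{lemma7} applied to $fs_1$ (two downward directions $s_2,s_3$) forces $l(fs_1s_2s_3)=M-3$, not $M-1$; a chain based at $fs_1s_2s_3$ then sits two levels below the maximum, so the mimicry of Claim~\ref{M2n-1} (whose contradictions all come from length-$M$ elements having length-$(M-1)$ neighbors, via Claims~\ref{M3n-2} and~\ref{M2n-1}) cannot be run, and no contradiction is reached — which is precisely why the paper eliminates this case by the Lemma~\ref{lemma b} argument before starting the chain. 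Conversely, the sub-case $m_{13}=\infty$ that you flag as ``substantially more delicate'' is the main case of the paper's proof and is handled smoothly once $l(fs_1s_2s_1)=M-3$ is known; also note the paper's chain consists of the elements $fs_1s_2s_j$, $3\le j\le n-1$, rather than your $fs_1s_2s_3s_i$. So while the overall plan (reduce to a Claim~\ref{M2n-1}-style propagation) is the right one, the missing treatment of $fs_1s_2s_1$ and of the case $m_{13}<\infty$ leaves the length bookkeeping, and hence the proof, incomplete.
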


\begin{proof}
 We will show that  $l(fs_1)\ne M-1$, the second part is similar.
Suppose that  $l(fs_1)= M-1$.
By Lemma~\ref{5angle}, $fs_1s_2\in I$ and $l(fs_1s_2)=M-2$.
Furthermore, $fs_1s_2s_1,fs_1s_2s_1s_2\in I$ ($m_{12}$ is not divisible by $3$). If $l(fs_1s_2s_1)=M-1$, then $l(fs_1s_2s_1s_2)=M$, so that these elements
are in contradiction with Claim~\ref{M2n-1}. This implies that $l(fs_1s_2s_1)=M-3$, see Fig~\ref{cl3}.

Now, suppose that $m_{13}\ne \infty$. Then $f,fs_1\in I$ together with $l(f)=l(fs_1)+1=M$ implies that $fs_1s_3\in I$.
In view of Claim~\ref{M3n-2}, we see that $l(fs_1s_3)\ne M$, so $l(fs_1s_3)=M-2$. This contradicts Lemma~\ref{lemma b}
(applied to the elements $fs_1s_3,fs_1,fs_1s_2,fs_1s_2s_1$ of lengths $M-2,M-1,M-2,M-3$). The contradiction shows that
$m_{13}=\infty$.

\begin{figure}[!h]
\begin{center}
\psfrag{l}{\scriptsize $l$} 
\psfrag{k}{\scriptsize $k$} 
\psfrag{f}{\scriptsize $f$} 
\psfrag{fs1}{\scriptsize $fs_1$} 
\psfrag{fs12}{\scriptsize $fs_1s_2$} 
\psfrag{fs13}{\scriptsize $fs_1s_3$} 
\psfrag{fs121}{\scriptsize $fs_1s_2s_1$} 
\psfrag{fs1212}{\tiny $fs_1s_2s_1s_2$} 
\psfrag{fs213}{\tiny $fs_2s_1s_3$} 
\psfrag{fs2132}{\tiny $fs_2s_1s_3s_2$} 
\psfrag{fs2121}{\tiny $fs_2s_1s_2s_1$} 
\psfrag{fs2123}{\tiny $fs_2s_1s_2s_3$} 
\psfrag{fs21232}{\tiny $fs_2s_1s_2s_3s_2$} 
\psfrag{M}{\scriptsize $M$} 
\psfrag{M1}{\scriptsize $M-1$} 
\psfrag{M2}{\scriptsize $M-2$} 
\psfrag{M3}{\tiny $M-3$} 
\psfrag{M4}{\tiny $M-4$} 
\epsfig{file=./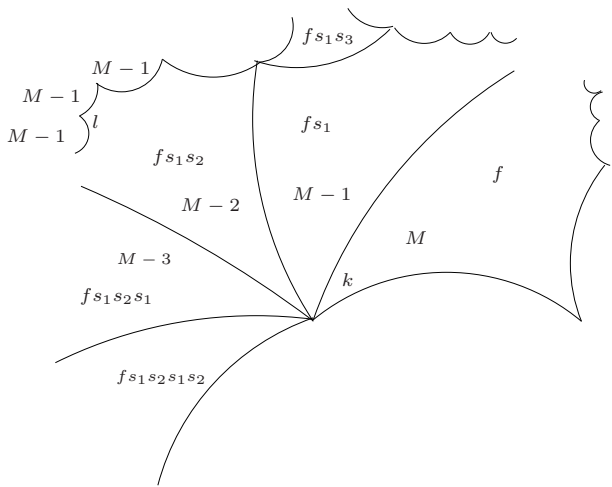,width=0.5\linewidth}
\caption{Towards the proof of Claim~\ref{M1n}}  
\label{cl3}
\end{center}
\end{figure}

Now, consider the neighbors of $fs_1s_2$. Since $l(fs_1s_2s_1)=M-3$ and $m_{1j}=\infty$ for $j>2$, we have $l(fs_1s_2s_j)=M-1$.

Suppose that $fs_1s_2s_3\notin I$. Then $fs_1s_3\in I$ (since $m_{23}\ne \infty$ and $fs_1s_2,fs_1\in I$). Moreover, 
$l(s_1s_3)=M$ (otherwise by Lemma~\ref{lemma7} we have a contradiction with the assumption  $fs_1s_2s_3\notin I$).
However, this contradicts the statement of Claim~\ref{M3n-2}. Hence, $fs_1s_2s_3\in I$. 

Now we use induction on $j$ to show that $fs_1s_2s_j\in I$ for all $3\le j\le n-1$. Indeed, if $fs_1s_2s_{j-1}\in I$ but $fs_1s_2s_j\notin I$, then $fs_1s_2s_{j-1}s_j\in I$, and $l(fs_1s_2s_{j-1}s_{j})=l(fs_1s_2s_{j-1})+1=M$, which contradicts Claim~\ref{M3n-2} or Claim~\ref{M2n-1}.

{ In particular, we obtain that $fs_1s_2s_{n-1}\in I$. Since $fs_1s_2\in I$ and $m_{n-1,n}$ is not divisible by $3$, this implies that at least five elements obtained from $fs_1s_2$ by multiplication by $s_{n-1}$ and $s_n$ belong to $I$. Recall that the lengths of $fs_1s_2s_{n},fs_1s_2,fs_1s_2s_{n-1}$ are $M-1, M-2, M-1$ respectively. Thus, $fs_1s_2s_n,fs_1s_2s_ns_{n-1}, fs_1s_2s_{n-1}s_n\in I$ with $l(fs_1s_2s_ns_{n-1})=l(fs_1s_2s_{n-1}s_n)=M$.}
However, this is in contradiction with the statement of Claim~\ref{M2n-1}, which completes the proof of the claim.
\end{proof}

\medskip
\noindent
Now, combining the results of Claims~\ref{M3n-2}--\ref{M1n} we obtain the lemma.\end{proof}

\section{Minimal groups without finite index reflection subgroups}
\label{min}
In this section we combine results of previous sections to obtain a criterion for odd-angled Coxeter groups to contain no finite index reflection subgroups. 
For this we list
{\it minimal} groups containing no finite index reflections subgroups
(where $W$ is {\it minimal} if $W$ has no finite index  reflection subgroup while each proper special subgroup of $W$ has one). According to Corollary~\ref{subd}, every Coxeter group with connected divisibility diagram containing such a minimal group as a special subgroup has no  finite index reflection subgroup either.

The results will be formulated in terms of divisibility diagrams $\Cox_{\div}(W)$ of odd-angled Coxeter systems $(W,S)$ (see Definition~\ref{div}). 


\begin{theorem}
\label{minimal}
Let $W$ be a minimal odd-angled Coxeter group containing no finite index reflection subgroup, and assume that $\Cox_{\div}(W)$ is connected.
Then $\Cox_{\div}(W)$ is one of the diagrams shown in Table~\ref{answer}.

\end{theorem}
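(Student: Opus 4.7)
The plan is to combine the existence constructions of Section~\ref{examples} with the non-existence results of Section~\ref{abs}, using the Subdiagram Lemma (Corollary~\ref{subd}) to transfer information from a minimal group to its special subgroups. I would proceed in three steps: first match each diagram in Table~\ref{answer} to a non-existence lemma, then verify minimality by checking that every one-vertex deletion gives a group covered by an existence construction, and finally show that no other connected diagram can be minimal by a case analysis on rank.

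The first two steps are essentially bookkeeping. Every rank~$3$ diagram in Table~\ref{answer} is either a triangle (handled by Lemma~\ref{triangles}) or a two-edge path with both labels coprime to~$6$ (Lemma~\ref{55inf}); the rank~$4$ entries are the two families of Lemmas~\ref{555-4} and~\ref{remaining4}; and the higher-rank entries form the series of Fig.~\ref{f_k3-3l}, handled by Lemma~\ref{k3-3l}. For minimality, deleting one vertex from a rank~$3$ diagram leaves a rank $\leq 2$ diagram, to which Lemma~\ref{k} applies directly. For the rank~$4$ and higher-rank entries one checks by direct inspection that every one-vertex deletion produces a diagram with at most one multiple edge, or a diagram containing $(5,5,3)$ surrounded by simple edges, so that Lemma~\ref{k} or Lemma~\ref{55} supplies a finite index reflection subgroup.

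The main step is the converse: if $W$ is minimal without a finite index reflection subgroup and $\Cox_{\div}(W)$ is connected, then $\Cox_{\div}(W)$ appears in Table~\ref{answer}. By Corollary~\ref{subd} applied to every proper special subgroup, each proper subdiagram of $\Cox_{\div}(W)$ must correspond to a group with a finite index reflection subgroup; by the existence Lemmas~\ref{553}, \ref{k}, and~\ref{55}, this means every proper subdiagram has order~$\leq 2$, has at most one multiple edge, or is a $(5,5,3)$-triangle with only simple edges elsewhere. I would stratify the enumeration by rank. For rank~$3$, all proper subdiagrams have rank~$\leq 2$, so the only extra constraint is that $W$ itself not be covered by Lemma~\ref{k} or Lemma~\ref{55}; this restricts $\Cox_{\div}(W)$ to triangles with at least two multiple edges that are not $(5,5,3)$, and to two-edge paths both of whose edges are multiple, matching Lemmas~\ref{triangles} and~\ref{55inf} respectively. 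For rank~$4$, the condition that every one-vertex deletion be permissible, combined with the requirement that $W$ itself be forbidden, cuts the candidates down to the two families of Lemmas~\ref{555-4} and~\ref{remaining4}. For rank~$\geq 5$, any one-vertex deletion must leave a diagram with at most one multiple edge (a $(5,5,3)$-block cannot survive after vertex deletion while keeping the rest simple, since at least two multiple edges would remain elsewhere), which forces all multiple edges of $\Cox_{\div}(W)$ to be concentrated on two distinguished vertices and the rest of the diagram to be a single path of simple edges, giving precisely the series of Fig.~\ref{f_k3-3l}.

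The main obstacle I expect is the rank~$4$ sub-analysis: here one must carefully distinguish configurations that differ only in the divisibility behavior of a single edge, and rule out a handful of borderline diagrams, either by exhibiting ad hoc fundamental chambers in the spirit of Lemma~\ref{553} (showing a finite index subgroup exists and hence excluding the diagram) or by locating a rank~$3$ subdiagram already in Table~\ref{answer} (contradicting minimality). Everything else either reduces to a small finite check or follows by a clean induction on rank driven by Corollary~\ref{subd}.
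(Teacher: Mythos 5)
Your overall architecture is the same as the paper's (match the table entries to the non-existence lemmas of Section~\ref{abs}, check minimality via the existence constructions, then enumerate by rank using minimality), but there is a concrete gap in how you treat proper special subgroups whose divisibility diagram is disconnected, and it breaks both your minimality check and your completeness argument in rank $\ge 5$. Deleting a vertex from a connected diagram can disconnect it; the resulting special subgroup is then a free product, and it has a finite index reflection subgroup as soon as each free factor does (this is one direction of Lemma~\ref{component}, applied together with Lemma~\ref{k} componentwise). So the restriction that minimality imposes on proper subdiagrams is a \emph{componentwise} one. Your stated trichotomy for proper subdiagrams, and in particular your rank~$\ge 5$ claim that ``any one-vertex deletion must leave a diagram with at most one multiple edge,'' is already false for the diagrams in Table~\ref{answer}: in the series of Fig.~\ref{f_k3-3l}, deleting an interior vertex of the path leaves both multiple edges intact, lying in two different connected components. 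Worse, taken literally your condition is self-destructive: two multiple edges span at most four vertices, so at rank $\ge 5$ one can always delete a vertex lying on neither of them and keep both; combined with Lemma~\ref{k} (whose contrapositive forces at least two multiple edges in a minimal diagram) your criterion would show that no minimal diagram of rank $\ge 5$ exists, contradicting the very series you are trying to recover. The paper avoids this by working componentwise and, for completeness at rank $n\ge 5$, by arguing instead that the two multiple edges cannot be joined by a path of fewer than $n-3$ edges (otherwise a proper \emph{connected} subdiagram already contains a lower-rank diagram of Table~\ref{answer}), which is what forces the long chain of simple edges between them.

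A secondary, fixable slip: from minimality you only know that every proper special subgroup has a finite index reflection subgroup (this is the definition of minimality, not an application of Corollary~\ref{subd}); converting this into a structural restriction on the subdiagram requires the non-existence results (the lower-rank classification, used inductively), not the existence Lemmas~\ref{553}, \ref{k}, \ref{55}, which only give sufficiency. Your rank $3$ analysis and the broad rank $4$ plan do follow the paper's route (at least two multiple edges by Lemma~\ref{k}, two multiple edges at a common vertex forbidden unless both are labelled $5$ by the rank $3$ table, leading to the families of Lemmas~\ref{555-4} and~\ref{remaining4}), so the repair is localized: state the permissibility criterion per connected component, justify it by the free-product observation, and redo the rank $\ge 5$ enumeration with the distance condition on the two multiple edges rather than the global ``at most one multiple edge after deletion'' condition.
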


\begin{remark}
It is easy to see that the diagram $\Cox_{\div}(W)$ is not connected (i.e., $S=S_1\sqcup S_2$ { for some nonempty $S_1$ and $S_2$}, and there are no edges between $S_1$ and $S_2$) if and only if $W$ is the free product of the groups generated by $S_1$ and $S_2$.  

\end{remark}

\begin{center}
\begin{table}
\caption{Minimal groups without finite index reflection subgroups and with connected $\Cox_{\div}(W)$}
\label{answer}
\begin{tabular}{|c|c|}
\hline
&\\
$\rank(W)$ & $\Cox_{\div}(W)$ \\
&\\
\hline
&\\
3&  
\begin{tabular}{cp{1cm}cp{0.8cm}c}
\psfrag{k}{\scriptsize $k$}
\psfrag{l}{\scriptsize $l$}
\psfrag{m}{\scriptsize $m,\infty$}
\psfrag{5}{\scriptsize $5$}
\epsfig{file=./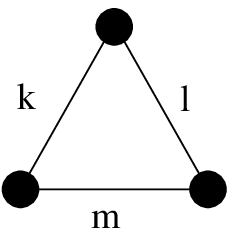,width=0.1\linewidth}
&&
\psfrag{k}{\scriptsize $p$}
\psfrag{l}{\scriptsize $l$}
\psfrag{m}{}
\psfrag{5}{\scriptsize $5$}
\epsfig{file=./pic/cox_klm.eps,width=0.1\linewidth}
&&
\raisebox{0.8cm}{\begin{tabular}{c}
$5\le k,l,m\le\infty$\\
$7\le p\le\infty$
\end{tabular}}
\end{tabular}
\\
\hline
&\\
4 & 
\begin{tabular}{cp{0.65cm}cp{0.8cm}c}
\psfrag{k}{\scriptsize $k$}
\psfrag{l}{\scriptsize $l$}
\psfrag{p}{\scriptsize $3,\infty$}
\psfrag{5}{\scriptsize $5$}
\psfrag{t}{\scriptsize $3,5,\infty$}
\begin{tabular}{c}
\epsfig{file=./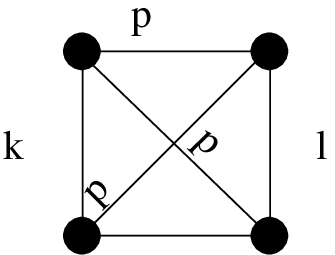,width=0.14\linewidth}\\
$5\le k,l<\infty$\\
\end{tabular}
&&
\psfrag{t}{\scriptsize $3,5,\infty$}
\psfrag{x5}{\scriptsize }
\psfrag{y5}{\scriptsize $5$}
\raisebox{-0.6cm}{\epsfig{file=./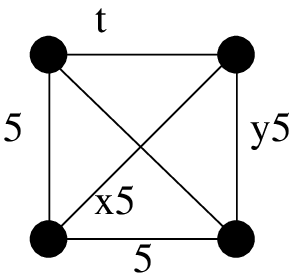,width=0.12\linewidth}}
&&
\psfrag{t}{\scriptsize }
\psfrag{x5}{\scriptsize $5$}
\psfrag{y5}{\scriptsize }
\raisebox{-0.6cm}{\epsfig{file=./pic/cox_5553n.eps,width=0.12\linewidth}}
\end{tabular}
\\
\hline
&\\
$5$ &  
\psfrag{k}{\scriptsize $k$}
\psfrag{l}{\scriptsize $l$}
\psfrag{p}{\scriptsize $3,\infty$}
\epsfig{file=./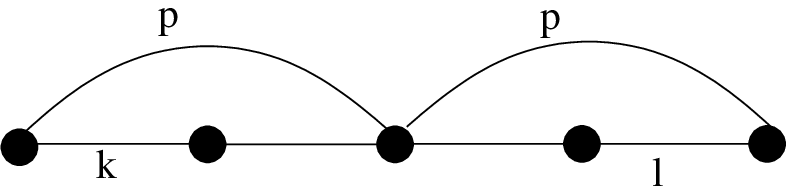,width=0.3\linewidth}
\qquad \qquad $5\le k,l <\infty$
\\
&\\
\hline
&\\
$n\ge 5$ &  
\psfrag{k}{\scriptsize $k$}
\psfrag{l}{\scriptsize $l$}
\psfrag{p}{\scriptsize $3,\infty$}
\epsfig{file=./pic/cox_k3-3l.eps,width=0.6\linewidth}
\qquad \qquad $5\le k,l <\infty$
\\
&\\
\hline

\end{tabular}
\end{table}
\end{center}

\begin{proof}[Proof of Theorem~\ref{minimal}]
We proceed by increasing the rank of $W$.

\medskip
\noindent
{\bf 1. $\rank(W)\le 2$.}
All these groups are either finite or infinite dihedral, so
any of them  has a finite index reflection subgroup.

\medskip

\noindent
{\bf 2. $\rank(W)=3$.} 
The diagrams listed in the table can be expressed as all connected diagrams of order $3$ except the ones shown in Fig.~\ref{3alt}. Now the statement follows from Lemmas~\ref{triangles},~\ref{55inf} and the examples constructed in Section~\ref{examples}. 
\medskip

\begin{figure}[!h]
\begin{center}
\psfrag{k}{\scriptsize $k$}
\psfrag{l}{\scriptsize $l$}
\psfrag{m}{\scriptsize $m,\infty$}
\psfrag{5}{\scriptsize $5$}
\psfrag{k}{\scriptsize $k\ge 3$}
\epsfig{file=./pic/cox_k.eps,width=0.1\linewidth}
\qquad\qquad\qquad 
\epsfig{file=./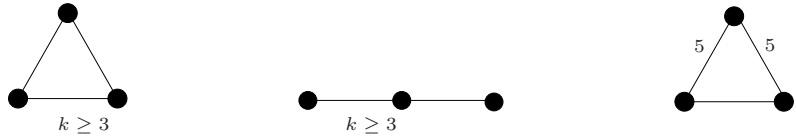,width=0.17\linewidth}
\qquad\qquad\qquad
\raisebox{0.26cm}{\epsfig{file=./pic/cox_553.eps,width=0.1\linewidth}}
\caption{Rank $3$ Coxeter groups having finite index subgroups}  
\label{3alt}
\end{center}
\end{figure}

\noindent
{\bf 3. $\rank(W)=4$.} 
All the diagrams of order $4$ shown in Table~\ref{answer} correspond to groups having no  finite index reflection subgroups (see Lemmas~\ref{555-4} and~\ref{remaining4}),
and no proper subdiagram of these diagrams defines a group having no finite index reflection subgroup.

To prove that the list is complete, note that by Lemma~\ref{k} the diagram $\Cox_{\div}(W)$ of minimal group $W$ should contain at least two multiple edges.
If there are two multiple edges incident to one vertex and at least one of them is labeled other than $5$, then $W$ is not minimal (see rank $3$ case).

So, either there are exactly two multiple edges, and they have no common vertex (as in the left diagram in Table~\ref{answer}, second row), or all multiple edges are labeled by $5$.

Suppose that all multiple edges are labeled by $5$ and there exist two multiple edges incident to the same vertex.
Due to the rank $3$ classification, in any order $3$ subdiagram containing two multiple edges the third edge should be simple. Thus, if there are three multiple edges incident to one vertex, we obtain the diagram shown in Table~\ref{answer} (right), otherwise we  obtain the diagram in the middle of row $2$ in Table~\ref{answer} (according to Lemma~\ref{55}, in this case we need at least three multiple edges).

\medskip
\noindent
{\bf 4. $\rank(W)\ge 5$.} 
By Lemma~\ref{k3-3l}, the diagrams shown in Table~\ref{answer} define Coxeter groups containing no finite index reflection subgroups.
By Lemma~\ref{k}, these diagrams are minimal connected diagrams defining groups without finite index reflection subgroups.
We are left to prove that the list is complete.

The proof is by induction on the rank of $W$. 
Suppose that  $\Cox_{\div}(W)$ is a connected diagram of a minimal group containing no finite index reflection subgroup. 
By Lemma~\ref{k}, $\Cox_{\div}(W)$ contains at least two multiple edges.
It follows from minimality and from the results for smaller ranks that $\Cox_{\div}(W)$ contains at most two multiple edges (if there are three multiple edges it is always possible to find a connected subdiagram containing only two of them { defining a group without finite index reflection subgroups}).
It also follows from minimality that in case of ${\mathrm{rk}}(W)=n$ the vertices of two distinct multiple edges should not be connected in  $\Cox_{\div}(W)$ by a path containing less than $n-3$ edges. 
This proves the completeness of the list.
\end{proof}

So far we considered odd-angled Coxeter groups with connected divisibility diagrams only. Now, suppose that $\Cox_{\div}(W)$ has several connected components, $\Cox_{\div}(W_1),\dots,\Cox_{\div}(W_k)$, where $W_i$ are odd-angled Coxeter groups. In other words, $W=W_1* W_2*\dots* W_k$.

\begin{lemma}
\label{component}
Let $W$ be an odd-angled Coxeter group and $W=W_1* W_2*\dots* W_k$. 
Then  $W$ contains a  finite index reflection subgroup if and only if at least one of $W_1,\dots,W_k$ contains one. 

\end{lemma}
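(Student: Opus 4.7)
The plan is to translate the statement about reflection subgroups into one about Coxeter polytopes in the Davis complex, and to exploit the tree-of-spaces structure that $\Sigma(W)$ inherits from the free product decomposition. For $W=W_1*\cdots*W_k$ the Davis complex $\Sigma(W)$ is a union of isometrically and totally geodesically embedded copies of the $\Sigma(W_i)$ (``sheets''); two distinct sheets meet either not at all or in a single chamber. Consequently, the wall $H_r$ of any reflection $r\in W$ lies in exactly one sheet -- the one corresponding to the unique conjugate of some $W_i$ that contains $r$.

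For the ``if'' direction, assume $V_j\subsetneq W_j$ is a proper finite index reflection subgroup with principal fundamental Coxeter polytope $P_j\subset\Sigma(W_j)$ whose set of chambers $I_j$ satisfies $|I_j|\ge 2$. Using the sheet containing $D(1)$, I would lift $P_j$ to
\[
P \;=\; \bigcup_{w\in I_j} D(w)\;\subset\;\Sigma(W).
\]
Convexity of $P$ in $\Sigma(W)$ follows from convexity of $P_j$ in $\Sigma(W_j)$ together with total geodesicity of the sheet. The walls of $P$ fall into two families: those inherited from $P_j$ (all corresponding to $W_j$-reflections) and, for each $w\in I_j$ and each $s\in S\setminus S_j$, the wall $H_{wsw^{-1}}$ (the facet of $D(w)$ leading out of the sheet; this is a wall of $P$ because $ws\notin W_j$ forces $D(ws)\not\subset P$). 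The remaining task is the codimension-two angle check. A normal-form computation in the free product shows that two walls from the second family meet only when they share the same $w$ and belong to the same factor $W_i$, in which case the angle is an angle of the Coxeter chamber $D(w)$; walls from different families never meet. Angles among walls of the first family are inherited from $P_j$. Hence $P$ is a Coxeter polytope containing $|I_j|\ge 2$ chambers, and therefore defines a proper finite index reflection subgroup of $W$.

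For the ``only if'' direction, let $V\subsetneq W$ be a proper finite index reflection subgroup with principal fundamental polytope $P\subset\Sigma(W)$ and chamber set $I=\{w:D(w)\subset P\}$. Since $V$ is proper, at least one neighbour of $D(1)$ lies in $P$; pick $s\in S$ with $D(s)\subset P$, and let $j$ be the index with $s\in S_j$. Setting $P_j=P\cap\Sigma(W_j)$ (intersection inside the sheet containing $D(1)$) and $I_j=I\cap W_j\supset\{1,s\}$, the convexity of $P_j$ in $\Sigma(W_j)$ follows from convexity of $P$ together with total geodesicity of the sheet. The walls of $P_j$ in $\Sigma(W_j)$ are precisely those walls of $P$ that come from $W_j$-reflections; all other walls of $P$ lie in different sheets and contribute nothing to $\partial P_j$. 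Every codimension-two angle of $P_j$ coincides with the corresponding angle of $P$, and is hence a submultiple of $\pi$, so $P_j$ is a Coxeter polytope in $\Sigma(W_j)$ with at least two chambers, defining a proper finite index reflection subgroup of $W_j$.

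The main obstacle is the codimension-two angle check in the ``if'' direction, which reduces to the following basic fact about free products: any alternating reduced word of length $\ge 2$ has infinite order. This is precisely what guarantees that walls coming from different sheets of $\Sigma(W)$ cannot generate finite dihedral subgroups, and hence cannot contribute unwanted non-Coxeter angles to the lifted polytope $P$.
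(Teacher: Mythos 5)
Your proposal is correct and follows essentially the same route as the paper: for the ``if'' direction, lift the principal fundamental Coxeter polytope of $V_j\subset W_j$ to the union of the corresponding chambers of $\Sigma(W)$ and verify the dihedral angles by noting that walls coming from distinct free factors (or their conjugates) never intersect, and for the ``only if'' direction, intersect the fundamental polytope of $V$ with $\Sigma(W_j)$ for a factor $W_j$ containing a generator $s$ with $D(s)\subset P$, exactly as in the paper's appeal to the argument of Lemma~\ref{special subgroup}. The only cosmetic difference is that you justify the wall-disjointness via the free-product normal form, while the paper phrases it through the disconnectedness of $\Cox_{\div}(W)$; the content is the same.
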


\begin{proof}
First, suppose that $V_1\subset W_1$ is a finite index reflection subgroup. Let $F_{V_1}$ be its principal fundamental domain in the Davis complex of $W_1$, and define $I_1$ to be $\{w \in W_1 | D(w) \in F_{V_1} \}$. Consider now $I_1$ as a set of elements of $W$, and define the polytope $P=\{D(w) | w \in I_1\}$. We want to show that $P$ is a Coxeter polytope.

The facets of $P$ (more precisely, the reflections stabilizing the walls containing the facets) can either belong to $W_1$ or not, so we need to look at different types of dihedral angles of $P$. If both facets defining an angle of $P$ belong to $W_1$, then the value of the dihedral angle is the same as in $F_{V_1}$, so it is an integer part of $\pi$. If one of the facets does not belong to $W_1$, then the other does not belong to $W_1$ either: by assumption on connected components of $\Cox_{\div}W$, no wall of $W_i$ meets any wall not belonging to $W_i$. 
Since all the walls that intersect the interior of $P$ correspond to reflections in $W_1$, in this case there exists a chamber containing the intersection of the facets and  whose defining walls contain the two facets. Hence the dihedral angle between the two facets is a dihedral angle of a chamber, i.e. an integer submultiple of $\pi$. Therefore, all the angles of $P$ are integer submultiples of $\pi$, hence $P$ is a fundamental domain of a finite index reflection subgroup of $W$. 

Now suppose $V\subset W$ is a finite index reflection subgroup. Since $V$ is a proper subgroup of $W$, the principal fundamental domain $F_V$ of $V$ contains a chamber $D(s)$ for some generating reflection $s\in S$. Clearly, there is a unique $W_i$ containing $s$. The same { reasonings as the ones} used in the proof of Lemma~\ref{special subgroup} show that $W_i$ has a finite index reflection subgroup.  
\end{proof}

Now we combine Theorem~\ref{minimal} with Lemma~\ref{component} to obtain the following criterion. 

\begin{theorem}
\label{cor1}
An odd-angled Coxeter group $W$ contains no finite index reflection subgroup if and only if each connected component of $\Cox_{\div}(W)$ contains one of the diagrams shown in Table~\ref{answer} as a subdiagram.

\end{theorem}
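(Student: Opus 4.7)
The plan is to assemble Theorem~\ref{minimal}, Lemma~\ref{component}, and the Subdiagram Lemma (Corollary~\ref{subd}) into a short argument that handles each direction of the equivalence in turn. The key observation is that Lemma~\ref{component} reduces everything to the connected case, while the Subdiagram Lemma and Theorem~\ref{minimal} together control the connected case: a group with connected divisibility diagram fails to have a finite index reflection subgroup precisely when its diagram contains a member of Table~\ref{answer} as a subdiagram.

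For the ``if'' direction, suppose each connected component $C_i$ of $\Cox_{\div}(W)$ contains a diagram $D_i$ from Table~\ref{answer}. Writing $W=W_1*\cdots*W_k$ with $\Cox_{\div}(W_j)=C_j$, by Lemma~\ref{component} it suffices to show that none of the $W_j$ has a finite index reflection subgroup. Fix $j$, and let $W_{D_j}\subset W_j$ be the special subgroup corresponding to the vertices of $D_j$. By Theorem~\ref{minimal}, $W_{D_j}$ admits no finite index proper reflection subgroup. Since $C_j$ is connected, Corollary~\ref{subd} applies: if $W_j$ had a finite index proper reflection subgroup, then $W_{D_j}$ would too, contradicting Theorem~\ref{minimal}. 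Hence $W_j$ has no such subgroup, and Lemma~\ref{component} finishes this direction.

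For the ``only if'' direction, assume $W$ has no finite index proper reflection subgroup. By Lemma~\ref{component}, every $W_j$ has no finite index reflection subgroup. I then prove the following by induction on the rank: any odd-angled Coxeter group $W'$ with connected $\Cox_{\div}(W')$ and no finite index proper reflection subgroup contains some diagram of Table~\ref{answer} as a subdiagram of $\Cox_{\div}(W')$. If $W'$ is minimal in the sense of the paper, Theorem~\ref{minimal} directly identifies $\Cox_{\div}(W')$ with one of the diagrams in the table. Otherwise there exists a proper special subgroup $W''\subsetneq W'$ without a finite index proper reflection subgroup; the diagram $\Cox_{\div}(W'')$ is a subdiagram of $\Cox_{\div}(W')$ but need not be connected. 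Applying Lemma~\ref{component} to $W''$ produces a connected component corresponding to a strictly smaller-rank group without finite index reflection subgroup, and the inductive hypothesis then yields a subdiagram from Table~\ref{answer} inside this component, hence inside $\Cox_{\div}(W')$.

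No step looks seriously difficult, since the heavy lifting has been carried out in Sections~\ref{sub}--\ref{min}. The only point that deserves careful phrasing is the induction in the ``only if'' direction, where the diagram of a special subgroup can disconnect; that is exactly the scenario that Lemma~\ref{component} is designed to handle, so by invoking it in tandem with the rank-reduction the induction closes cleanly. The final statement is obtained by combining the two directions and applying Lemma~\ref{component} once more to translate the connected-component description into a statement about $W$ itself.
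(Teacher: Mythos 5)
Your proposal is correct and follows essentially the same route as the paper, which deduces Theorem~\ref{cor1} by combining Theorem~\ref{minimal} with Lemma~\ref{component} and Corollary~\ref{subd}; your induction in the ``only if'' direction simply makes explicit the rank-reduction the paper leaves implicit. One cosmetic caveat: the fact that the groups whose diagrams appear in Table~\ref{answer} have no finite index reflection subgroup is established inside the proof of Theorem~\ref{minimal} (via Lemmas~\ref{triangles}, \ref{55inf}, \ref{555-4}, \ref{remaining4} and~\ref{k3-3l}) rather than in its statement, so in the ``if'' direction you should cite those lemmas (or that part of the proof) rather than the bare statement of Theorem~\ref{minimal}.
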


We close the paper with the following reformulation of Theorem~\ref{cor1} not referring to Table~\ref{answer}.

\begin{cor}
\label{cor2}
An odd-angled Coxeter group $W$ contains a finite index reflection subgroup if and only if 
$\Cox_{\div}(W)$ contains at least one connected component $C$ of one of the following three types:

\begin{itemize}
\item[1.] the order of $C$ is $1$ or $2$;

\item[2.] $C$ contains at most one multiple edge;

\item[3.] $C$ contains a subdiagram $D$ of order $3$ with labels $(5,5,3)$, and every non-absent edge of $C$ except the edges of $D$ is simple.

\end{itemize}

\end{cor}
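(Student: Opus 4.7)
The plan is to use Theorem~\ref{cor1} as a black box: it reduces the problem to the purely combinatorial question of characterizing the connected divisibility diagrams $C$ that contain \emph{no} diagram of Table~\ref{answer} as a full subdiagram. Hence it suffices to show that such $C$ are exactly those described by the three conditions of the corollary.

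For the ``if'' direction I would simply inspect the table. Every diagram in Table~\ref{answer} has at least three vertices and at least two multiple edges, which immediately eliminates types~$1$ and~$2$. For type~$3$, the only multiple edges of $C$ are the two edges of $D$ labeled $5$, which are incident to a common vertex; but each diagram of Table~\ref{answer} either has three multiple edges, or contains a multiple edge with label $\geq 7$, or contains a pair of multiple edges that do not share a vertex. None of these can appear as a full subdiagram of a type-$3$ component.

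For the ``only if'' direction I would take $C$ that is neither of type~$1$, $2$, nor~$3$; so $|C|\geq 3$ and $C$ has at least two multiple edges, and I need to extract a Table subdiagram. If two multiple edges share a vertex $v$, with other endpoints $u_1,u_2$, I would inspect the induced triangle $\{v,u_1,u_2\}$: when $u_1u_2$ is multiple or absent, the triangle matches the first row of the table; when $u_1u_2$ is simple with labels $\neq (5,5,3)$, it matches the second row. If no two multiple edges share a vertex, I would take two multiple edges $e_1,e_2$ together with a shortest connecting path in $C$; the resulting induced subdiagram has two multiple edges at its ends and a simple-edge path between them, matching the rank-$4$ ``$(k,l)$'' row or the rank-$\geq 5$ ``$(k,3,\dots,3,l)$'' rows (the ``$3,\infty$''-labels in the table absorb the other possible chords, which must be simple or absent by minimality of the path).

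The main obstacle is the remaining subcase: the triangle $\{v,u_1,u_2\}$ has labels $(5,5,3)$. Since $C$ is not of type~$3$, some multiple edge $e_3$ of $C$ lies outside this triangle. If $e_3$ is vertex-disjoint from $\{v,u_1,u_2\}$, I would apply the shortest-path construction of the previous case to produce a rank-$\geq 4$ Table diagram. If $e_3$ is incident to the triangle, I would split according to whether the shared vertex is the apex $v$ or one of $u_1,u_2$ and, depending on the labels of the new chords, inspect the induced $4$-vertex subdiagram and match it to one of the three rank-$4$ rows of Table~\ref{answer}. This final bookkeeping---tracking labels and positions of the extra multiple edge against the three rank-$4$ entries of the table---is the step I expect to be the most delicate, though it is purely combinatorial.
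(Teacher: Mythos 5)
Your overall route---using Theorem~\ref{cor1} as a black box and proving the purely combinatorial equivalence ``a connected component $C$ contains no diagram of Table~\ref{answer} as a full subdiagram if and only if $C$ is of type 1, 2 or 3''---is exactly the intended reading (the paper states the corollary as a reformulation of Theorem~\ref{cor1} and gives no separate proof), but both halves of your combinatorial argument have concrete defects. In the ``if'' direction your trichotomy about the table is false: the rank-$3$ entry with parameters $(k,l,m)=(5,5,\infty)$, i.e.\ two edges labelled $5$ sharing a vertex with the third edge absent, is a genuine table diagram (it is the situation of Lemma~\ref{55inf}), and it has neither three multiple edges, nor a label $\ge 7$, nor a disjoint pair of multiple edges. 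The desired conclusion still holds because in a type-$3$ component the outer endpoints of the two $5$-edges are joined by a simple edge, so the full subdiagram on those three vertices is $(5,5,3)$ and not $(5,5,\infty)$; but this case must be treated explicitly, and it is precisely where ``full subdiagram'' matters.

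In the ``only if'' direction two steps do not work as stated. First, in the disjoint situation you must choose the two multiple edges at minimal distance: a shortest path between an arbitrarily chosen disjoint pair may pass through a third multiple edge, and then the induced diagram matches no row of the table. More seriously, when the shared-vertex triangle is $(5,5,3)$ and the extra multiple edge $e_3$ is vertex-disjoint from it, you cannot simply ``apply the shortest-path construction of the previous case'': its hypothesis (no two multiple edges share a vertex) fails, and so can its conclusion. For example, if $C$ has $5$-edges $vu_1,vu_2,vx,xw$ and simple edges $u_1u_2,u_1x,u_2x,vw$, then $D$ is the triangle on $\{v,u_1,u_2\}$ and $e_3=xw$ is disjoint from it, but any shortest connecting path picks up the chord $vx$ labelled $5$, so the induced subdiagram has three $5$-edges in a zigzag and is not of the shape ``two multiple edges joined by a simple path''; one has to recognize it instead as the middle rank-$4$ entry of Table~\ref{answer} (or use the star on $\{v,u_1,u_2,x\}$). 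That all-labels-$5$ analysis is essentially the same bookkeeping as the incident-$e_3$ case you defer, and that deferred case is where the real content of the corollary lies; note also that matching only against the three rank-$4$ rows is not sufficient there, since for some chord labels (an absent chord, or a chord with label $\ge 7$) the table diagram sitting inside your four vertices is a rank-$3$ one such as $(5,5,\infty)$ or $(5,p,3)$ with $p\ge 7$. So the skeleton is the right one, but the case analysis that would constitute the proof is partly incorrect and partly missing.
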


{
\section*{Acknowledgments}

A.F. and P.T. would like to thank IHES in Bures-sur-Yvette for hospitality and great research atmosphere during their stay. A.F. is also grateful to the Max Planck Institute for Mathematics in Bonn for the financial support and excellent research conditions. An essential part of the work was completed while all the authors were affiliated with Jacobs University Bremen.  The authors are also grateful to the anonymous referees for valuables comments and remarks. 

}

\end{document}